\theoremstyle{plain}
\newtheorem{theorem}{Theorem}[section]
\newtheorem{proposition}[theorem]{Proposition}
\newtheorem{lemma}[theorem]{Lemma}
\newtheorem{corollary}[theorem]{Corollary}
\newtheorem{conjecture}[theorem]{Conjecture}
\theoremstyle{remark}
\newtheorem{remark}[theorem]{Remark}
\theoremstyle{definition}
\newtheorem{definition}[theorem]{Definition}
\newtheorem{example}[theorem]{Example}
\newtheorem{hypothesis}[theorem]{Hypothesis}
\DeclareMathOperator{\Gal}{Gal}
\DeclareMathOperator{\Hom}{Hom}
\DeclareMathOperator{\End}{End}
\DeclareMathOperator{\N}{N}
\DeclareMathOperator{\im}{im}
\newcommand{\bQ}{\mathbb{Q}}
\newcommand{\bZ}{\mathbb{Z}}
\newcommand{\QQ}{\mathbb{Q}}
\newcommand{\cA}{\mathcal{A}}
\newcommand{\cD}{\mathcal{D}}
\newcommand{\cF}{\mathcal{F}}
\newcommand{\cG}{\mathcal{G}}
\newcommand{\cH}{\mathcal{H}}
\newcommand{\cK}{\mathcal{K}}
\newcommand{\cL}{\mathcal{L}}
\newcommand{\cN}{\mathcal{N}}
\newcommand{\cO}{\mathcal{O}}
\newcommand{\cP}{\mathcal{P}}
\newcommand{\cQ}{\mathcal{Q}}
\newcommand{\cR}{\mathcal{R}}
\newcommand{\cS}{\mathcal{S}}
\newcommand{\cT}{\mathcal{T}}
\newcommand{\fa}{\mathfrak{a}}
\newcommand{\fd}{\mathfrak{d}}
\newcommand{\ff}{\mathfrak{f}}
\newcommand{\fq}{\mathfrak{q}}
\newcommand{\fp}{\mathfrak{p}}
\newcommand{\fr}{\mathfrak{r}}
\newcommand{\fn}{\mathfrak{n}}
\newcommand{\fm}{\mathfrak{m}}
\newcommand{\fQ}{\mathfrak{Q}}
\newcommand{\CC}{\mathbb{C}}
\newcommand{\FF}{\mathbb{F}}
\newcommand{\GG}{\mathbb{G}}
\newcommand{\RR}{\mathbb{R}}
\newcommand{\ZZ}{\mathbb{Z}}
\newcommand{\Z}{\mathbb{Z}}
\newcommand{\id}{\mathrm{id}}
\newcommand{\bz}{\mathbb{Z}}
\newcommand{\ord}{\mathrm{ord}}
\begin{document}

\title[]{On the theory of higher rank\\
 Euler, Kolyvagin and Stark systems, IV:\\
 the multiplicative group}

\author{David Burns, Ryotaro Sakamoto and Takamichi Sano}

\begin{abstract} We describe a refinement of the general theory of higher rank Euler, Kolyvagin and Stark systems in the setting of the multiplicative group over arbitrary number fields. We use the refined theory to prove new results concerning the Galois structure of ideal class groups and the validity of both the equivariant Tamagawa number conjecture and of the `refined class number formula' that has been conjectured by Mazur and Rubin and by Sano. In contrast to previous work in this direction, these results require no hypotheses on the decomposition behaviour of places that are intended to rule out the existence of `trivial zeroes'.
\end{abstract}

\address{King's College London,
Department of Mathematics,
London WC2R 2LS,
U.K.}
\email{david.burns@kcl.ac.uk}

\address{Graduate School of Mathematical Sciences, The University of Tokyo,
3-8-1 Komaba, Meguro-Ku, Tokyo, 153-8914, Japan}
\email{sakamoto@ms.u-tokyo.ac.jp}

\address{Osaka City University,
Department of Mathematics,
3-3-138 Sugimoto\\Sumiyoshi-ku\\Osaka\\558-8585,
Japan}
\email{sano@sci.osaka-cu.ac.jp}

\maketitle

\tableofcontents

\section{Introduction}

\subsection{Discussion of the main results} A general theory of higher rank Euler, Kolyvagin and Stark systems was developed in our articles \cite{bss} and \cite{bss2}, following initial work of Mazur and Rubin in \cite{MRkoly} and subsequent independent work of the first and third authors  in \cite{sbA} and of the second author in \cite{sakamoto}.

In this article we shall now prove, in Theorem \ref{th main}, a refined version of the general theory in the setting of the multiplicative group over arbitrary number fields.

This result shows firstly that, both for a more general class of abelian extensions than is considered in \cite{bss} and \cite{bss2}, and under fewer hypotheses on $p$-adic representations than are used in loc. cit., there exists a canonical `higher Kolyvagin derivative' homomorphism between the modules of higher rank Euler and Kolyvagin systems for the multiplicative group over an arbitrary number field.

The result also shows that, under the same hypotheses, the canonical `regulator' homomorphism that exists between the corresponding modules of Stark and Kolyvagin systems is bijective and also that these modules are each  free of rank one over the relevant Gorenstein algebra.

In terms of the approach taken in \cite{bss} and \cite{bss2}, these are the key facts that one must establish in order to develop a natural (`equivariant') theory of higher rank Euler and Kolyvagin systems.

To demonstrate the effectiveness of the theory that results in this case, we shall then use it to derive three concrete consequences of the assumed validity of the Rubin-Stark Conjecture.

The first two consequences that we describe (in Corollaries \ref{cor main 1} and \ref{cor main 2}) respectively concern the fine Galois structure of ideal class groups, and thereby strongly refined versions of the classical Brumer and Brumer-Stark Conjectures, and the validity of the equivariant Tamagawa number conjecture for the untwisted Tate motive.

These results improve upon the corresponding results that are proved in \cite{bss2} in that the methods developed here allow us to deal with components at $p$-adic characters that are trivial on the Frobenius elements of $p$-adic places.

This improvement is significant since, for example, in the context of CM extensions of totally real fields it allows us to avoid having to effectively assume that the associated Deligne-Ribet $p$-adic $L$-series have no `trivial zeroes'.

In this context we recall that ever since Wiles' seminal work in \cite{wiles}, it has been clear that trivial zeroes play a pivotal role in relation to efforts to deduce Brumer's Conjecture and its variants from the validity of appropriate main conjectures in Iwasawa theory.

In particular, in order to avoid difficult problems related to the order of vanishing of $p$-adic $L$-series, 
 previous investigations of these questions have usually either followed ideas of Greither in \cite{Greither}  and studied special classes of fields in which trivial zeroes can be ruled out or, as in the recent work of Greither and Popescu \cite{GreitherPopescu} and Johnston and Nickel \cite{JN}, studied weaker versions of the conjectures by considering `imprimitive' $L$-series. However, under mild hypotheses, the approach developed here now allows one to avoid all such problems.

The third consequence that we shall derive from Theorem \ref{main} is stated as Corollary \ref{cor main 3} and provides new evidence for the `refined class number formula for $\mathbb{G}_m$' that was independently formulated by Mazur and Rubin in \cite{MRGm} and by the third author in \cite{sano}.

This result strongly improves upon the existing evidence for the conjecture of Mazur, Rubin and Sano since our methods allow us both to consider $p$-components of the conjectural formula at all odd primes $p$ and also to deal with $L$-series of arbitrary order of vanishing.

In contrast, previous results in this direction have either assumed the validity of the relevant case of the equivariant Tamagawa number conjecture, as in \cite[Th. 1.1]{bks1} and the main result of \cite{sano}, or have both avoided considering $p$-components for primes $p$ that divide the degree of the relevant Galois extension and also either assumed that all occurring $L$-series vanish to order one, as in \cite[Th. 10.7]{MRGm}, or assumed the validity of Leopoldt's Conjecture (in addition to that of the Rubin-Stark Conjecture) and only considered a weaker version of the refined class number formula, as in \cite[Th. 11.6]{MRGm}.

Finally, we recall that the Rubin-Stark Conjecture is known to be valid both in the setting of absolutely abelian fields and of abelian extensions of any imaginary quadratic field.

In all such cases, therefore, the results of Corollaries \ref{cor main 1}, \ref{cor main 2} and \ref{cor main 3} are valid unconditionally.

In the case of absolutely abelian fields these results merely give (much) simpler proofs of various existing results, including relevant cases of the main result of Greither and the first author in \cite{BG} and 
several of the main results of Kurihara and the first and third authors in \cite{bks1} (for more details of which see \cite[Rem. 5.2(ii)]{bss2}).

However, in the setting of abelian extensions of imaginary quadratic fields these results are new and can be seen to extend, for example, the main results of Bley in \cite{bley} concerning the equivariant Tamagawa number conjecture and of Gomez in \cite{gomez} concerning refined class number formulas.

The basic contents of this article is as follows. In \S\ref{sec eks} we introduce a natural notion of higher rank `motivic' Euler systems for $\mathbb{G}_m$ and also define associated notions of Kolyvagin and Stark systems in this context. In \S\ref{sec main} we then state the main results of this article and give proofs of Theorem \ref{th main} and Corollaries \ref{cor main 1} and \ref{cor main 2}, with certain auxiliary arguments (that are required  for the proof of Theorem \ref{th main}) being given in an appendix to the article. In \S\ref{sec rcnf} we recall the precise statement of the refined class number formula conjecture (as Conjecture \ref{mrs}) and then, finally, we give a precise statement and proof of our main result (Theorem \ref{main}) concerning this conjecture.

\subsection{Notation}

Throughout this article, $K$ denotes a number field. We fix an algebraic closure $\overline \QQ$ of $\QQ$, and regard $K \subset \overline \QQ$. We set $G_K:=\Gal(\overline \QQ/K)$.

We fix a prime number $p$ and an algebraic closure $\overline \QQ_p$ of $\QQ_p$.

The set of all archimedean places of $K$ is denoted by $S_\infty(K)$ and the set of $p$-adic places by $S_p(K)$. If $E/K$ is an algebraic extension, then the set of places of $K$ that ramify in $E$ is denoted by $S_{\rm ram}(E/K)$.

For each place $v$ of $K$, we fix a place $w$ of $\overline \QQ$ lying above $v$. In this way, for any algebraic extension $E/K$ we fix a place $w$ of $E$ that lies above $v$.

The symbol $\fq$ always means a non-archimedean place (i.e., a prime ideal) of $K$. The fixed place lying above $\fq$ is often denoted by $\fQ$. Sometimes we also use $\fr$ for the notation of a prime ideal of $K$.

For a finite set $S$ of places of $K$ that contains $S_\infty(K)$ and a finite extension $E/K$, we write $\cO_{E,S}$ for the ring of $S_E$-integers of $E$, where $S_E$ is the set of places of $E$ lying above a place in $S$.

For a non-archimedean local field $D$, we denote its residue field by $\FF_D$. If $\fq$ is a prime of $K$, then we denote $\# \FF_{K_\fq}$ by ${\N}\fq$.

For a finite abelian group $G$, we set $\widehat G:=\Hom(G,\CC^\times)$. For each $\chi$ in $\widehat G$, we write $e_\chi$ for the associated idempotent $(\# G)^{-1}\sum_{\sigma \in G}\chi(\sigma)\sigma^{-1}$ of $\CC[G]$. 

For an abelian group $A$ and a non-negative integer $m$, we often abbreviate $A/mA$ to $A/m$. In particular, for a field $E$, we denote $E^\times/(E^\times)^m$ by $E^\times/m$. 

For a commutative ring $R$, an $R$-module $X$ (usually finitely generated) and a non-negative integer $r$, we define
$${\bigcap}_R^r X:=\Hom_R\left({\bigwedge}_R^r \Hom_R(X,R),R \right).$$
We recall the following basic properties of such modules (taken from \cite[App. A]{sbA}).
\begin{itemize}
\item There exists a natural homomorphism
$${\bigwedge}_R^r X \to {\bigcap}_R^r X; \ a \mapsto (\Phi \mapsto \Phi(a))$$
(that is, in general, neither injective nor surjective).
\item Let $\cO$ be a Dedekind domain and $\cQ$ its quotient field. If $R$ is a Gorenstein $\cO$-order in some finite dimensional commutative semisimple $\cQ$-algebra (e.g., $R$ is a finite group ring over $\cO$), we have a natural identification
$${\bigcap}_R^r X \simeq \left\{a \in \cQ \otimes_\cO {\bigwedge}_R^r X \ \middle| \ \Phi(a) \in R \text{ for all }\Phi \in {\bigwedge}_R^r \Hom_R(X,R) \right\}.$$
In this way we can regard ${\bigcap}_R^r X$ as a sublattice of $\cQ \otimes_\cO {\bigwedge}_R^r X.$
\end{itemize}

For a finitely presented $R$-module $X$ and a non-negative integer $j$, the $j$-th Fitting ideal of $X$ is denoted by ${\rm Fitt}_R^j(X)$.

\section{Euler, Kolyvagin and Stark systems}\label{sec eks}

\subsection{The definition of Euler systems for $\GG_m$}\label{sec first}
Let $K$ be a number field. Let $S$ be a finite set of places of $K$ containing $S_\infty(K)$. Let $\mathcal{L}/K$ be a (possibly infinite) abelian extension. We write $\Omega(\mathcal{L}/K)$ for the set of finite extensions of $K$ in $\mathcal{L}$. For each field $E$ in $\Omega(\mathcal{L}/K)$ we set
$$\cG_E:=\Gal(E/K) \text{ and }S(E):=S\cup S_{\rm ram}(E/K).$$
%

\begin{definition}\label{mes def}
Let $r$ be a non-negative integer.
A {\it motivic Euler system} of rank $r$ for $(\mathcal{L}/K,S)$ is a collection
$$u=(u_E)_{E  } \in \prod_{E \in \Omega(\mathcal{L}/K)\setminus \{K\}}  \QQ \otimes_\ZZ {\bigwedge}_{\ZZ[\cG_E]}^r \cO_{E,S(E)}^\times$$
which satisfies the following property (the `norm relation'): for any $E, E' \in \Omega(\mathcal{L}/K)\setminus \{K\}$ with $E\subset E'$, we have
$${\N}_{E'/E}^r(u_{E'})= \left( \prod_{\fq \in S(E')\setminus S(E)} (1-{\rm Fr}_\fq^{-1})\right)u_E \text{ in } \QQ \otimes_\ZZ {\bigwedge}_{\ZZ[\cG_E]}^r \cO_{E, S(E')}^\times.$$
Here ${\N}_{E'/E}^r$ denotes the map $\QQ \otimes_\ZZ {\bigwedge}_{\ZZ[\cG_{E'}]}^r \cO_{E',S(E')}^\times \to \QQ \otimes_\ZZ {\bigwedge}_{\ZZ[\cG_E]}^r \cO_{E,S(E')}^\times$ induced by the norm map ${\N}_{E'/E}: E' \to E$.

\end{definition}

\begin{remark}
For a motivic Euler system $u=(u_E)_E$ to be useful, it should lie in a canonical sublattice of
$\QQ \otimes_\ZZ {\bigwedge}_{\ZZ[\cG_E]}^r \cO_{E,S(E)}^\times$. However, this issue is delicate since $u_E$ does not in general belong to the image of ${\bigwedge}_{\ZZ[\cG_E]}^r \cO_{E,S(E)}^\times $ in $\QQ \otimes_\ZZ {\bigwedge}_{\ZZ[\cG_E]}^r \cO_{E,S(E)}^\times$ (see \cite[\S 4]{rubinstark}). To properly understand such integrality issues, it is usual to fix an auxiliary finite set $T$ of places of $K$ that do not belong to $S$ and are unramified in $\mathcal{L}$ and are such that the group
$$\cO_{E,S(E),T}^\times:=\{a \in \cO_{E,S(E)}^\times \mid {\rm ord}_w(a-1)>0 \text{ for every $w \in T_E$}\}$$
is torsion-free, where $\ord_w$ denotes the normalized additive valuation at $w$. One then considers motivic Euler systems $u$ with the property that $u_E$ belongs to ${\bigcap}_{\ZZ[\cG_E]}^r \cO_{E,S(E),T}^\times$ for all $E$.

In this paper, however, we do not need to explicitly consider `$T$-modifications' as we shall only focus on the `$(p,\chi)$-component' of motivic Euler systems in situations for which the $(p,\chi)$-component of the group $\cO_{E,S(E)}^\times$ is itself torsion-free. For details see Definition~\ref{p integral} and Example~\ref{ex2}(i) below.
\end{remark}


\begin{example}\label{ex1}\

\noindent{}(i) (The cyclotomic Euler system) Suppose $K=\QQ$ and $S=\{\infty\}$. Fix an embedding $\overline \QQ \hookrightarrow \CC$, and regard $\overline \QQ \subset \CC$. Let $\mathcal{L}/\QQ$ be the maximal real abelian extension. Then for any field $E$ in $\Omega(\cL/\QQ)\setminus \{\QQ\}$ we define the cyclotomic unit by
$$\eta_{E}^{\rm cyc}:=\frac 12 \otimes {\N}_{\QQ(\mu_m)/E}(1-\zeta_m) \in \QQ \otimes_\ZZ \cO_{E,S(E)}^\times,$$
where $m=m_E$ is the conductor of $E$ and $\zeta_m:=e^{2\pi i/m}$. The collection $(\eta_{E}^{\rm cyc})_E$ is a motivic Euler system for $(\cL/\QQ,S)$.

\noindent{}(ii) (The elliptic Euler system) Suppose that $K$ is an imaginary quadratic field. Fix a non-zero ideal $\ff$ of $K$ such that $\cO_K^\times \to (\cO_K/\ff)^\times$ is injective and set $S:=\{\infty\}\cup \{\fq \mid \ff\}$. For an ideal $\fm$ of $K$, let $K(\fm)$ denote the ray class field of $K$ modulo $\fm$. As in (i), we regard $\overline \QQ \subset \CC$. Let $\mathcal{L}/K$ be the maximal abelian extension. For any field $E$ in $\Omega(\mathcal{L}/K)$ we define the elliptic unit by
$$\eta_E^{\rm ell}:=(\sigma_\fa- {\N}\fa)^{-1} \cdot \N_{K(\fm\ff)/E}\left( {}_\fa z_{\fm \ff} \right) \in \QQ \otimes_\ZZ \cO_{E,S(E)}^\times,$$
where $\fm$ is the conductor of $E$, $\fa \neq (1)$ is an ideal of $K$ coprime to $6\fm\ff$, $\sigma_\fa \in \cG_E$ is the Artin symbol, ${\N}{\fa}$ is the order of $\cO_K/\fa$, and ${}_\fa z_{\fm\ff} \in \cO_{K(\fm\ff),S(E)}^\times$ is the element defined in \cite[\S 15.5]{katoasterisque} (it coincides with $\psi(1;\fm\ff,\fa)^{-1}$ in \cite{bley}). (Note that $\sigma_\fa - {\N}\fa$ is invertible in $\QQ[\cG_E]$ and so $(\sigma_\fa -{\N}\fa)^{-1} \in \QQ[\cG_E]$ is defined.) One sees that this element is independent of $\fa$. The collection $(\eta_E^{\rm ell})_E$ is a motivic Euler system for $(\mathcal{L}/K,S)$.

\noindent{}(iii) (The Rubin-Stark Euler system) Let $(\mathcal{L}/K,S)$ be any data as in Definition \ref{mes def}. Fix an integer $r$ such that there exists a subset $V:=\{v_1,\ldots,v_r\}$ of $S_\infty(K)$ comprising places that split completely in $\mathcal{L}$ and, in addition, one has $\# S(E) >r$ for every $E$ in $\Omega(\mathcal{L}/K)\setminus \{K\}$. For each such field $E$ one can use the data $(E/K,S(E),V)$ to specify a canonical `Rubin-Stark element' $\eta_{E/K,S(E)}^V$ of $\mathbb{R}\otimes_\ZZ{\bigwedge}_{\ZZ[\cG_E]}^r \cO_{E,S(E)}^\times$. (See \cite[\S 5.1]{bks1} for the definition of such elements but note that we take the set `$T$' in loc. cit. to be empty.) Then the Stark conjecture \cite[Conj. A$'$]{rubinstark} predicts that each $\eta_{E/K,S(E)}^V$ belongs to $\QQ \otimes_\ZZ{\bigwedge}_{\ZZ[\cG_E]}^r \cO_{E,S(E)}^\times$ and if this is valid the collection $(\eta_{E/K,S(E)}^V)_E$ is a motivic Euler system for $(\mathcal{L}/K,S)$. If $K=\QQ$ and $S=\{\infty\}$, then $\eta_{E/\QQ,S(E)}^{\{\infty\}}$ coincides with the cyclotomic unit $\eta^{\rm cyc}_{E}$ in (i) (see \cite[p.79]{tatebook}). If $K$ is an imaginary quadratic field and $S$ is as in (ii), then $\eta_{E/K,S(E)}^{\{\infty\}}$ coincides with $\eta_E^{\rm ell}$ in (ii) (this is verified by Kronecker's limit formula, see \cite[(15.5.1)]{katoasterisque} or \cite[(10)]{bley}).
\end{example}

We now fix an {\it odd} prime number $p$ and a {\it non-trivial} character $\chi: G_K \to \overline \QQ_p^\times$ of finite prime-to-$p$ order, and set
$$L:=\overline \QQ^{\ker \chi}\text{ and }\Delta:=\Gal(L/K).$$
We assume that $S$ contains $S_{\rm ram}(L/K)$ (but do not need to assume that $S$ contains $S_p(K)$). We set
$$\cO:=\ZZ_p[\im \chi].$$
For any $\ZZ[\Delta]$-module $X$, we define its $(p,\chi)$-component by
$$X_\chi:=\cO \otimes_{\ZZ[\Delta]} X,$$
where $\cO$ is regarded as a $\ZZ[\Delta]$-algebra via $\chi$. For an element $a \in X$, we set
$$a^\chi:=1 \otimes a \in X_\chi.$$
Let $\cK/K$ be an abelian pro-$p$-extension. For $F \in \Omega(\cK/K)$, we set
$$U_{F,S(F)}:=(\cO_{LF,S(F)}^\times)_\chi \text{ and }U_{F}:=(\cO_{LF}^\times)_\chi.$$

\begin{definition}\label{p integral}
Let $r$ be a non-negative integer.
A {\it $p$-adic Euler system} of rank $r$ for $(\mathcal{K}/K,S,\chi)$ is a collection
$$c=(c_F)_{F } \in \prod_{F \in \Omega(\cK/K)} {\bigcap}_{\cO[\cG_F]}^r U_{F,S(F)}$$
that has the following property: for all $F$ and $F'$ in $\Omega(\cK/K)$ with $F\subset F'$, we have
$${\N}_{F'/F}^r(c_{F'})= \left( \prod_{\fq \in S(F')\setminus S(F)} (1-{\rm Fr}_\fq^{-1})\right)c_F \text{ in } {\bigcap}_{\cO[\cG_F]}^r U_{F,S(F')}.$$
Such a system $c=(c_F)_F$ is said to be a {\it strict $p$-adic Euler system} if $c_F$ belongs to ${\bigcap}_{\cO[\cG_F]}^r U_{F}$ for every $F$ in $\Omega(\cK/K)$. The set of strict $p$-adic Euler systems of rank $r$ for $(\cK/K,S,\chi)$ will be denoted ${\rm ES}_r(\cK/K,S,\chi)$  and is naturally an $\cO[[\Gal(\cK/K)]]$-module. 
\end{definition}

\begin{remark}\label{twisted rep} Write $\cO(1) \otimes \chi^{-1}$ for the representation of $G_K$ that is equal to $\cO$ as an $\cO$-module and upon which $G_K$ acts via $\chi_{\rm cyc}\chi^{-1}$, where $\chi_{\rm cyc}$ denotes the cyclotomic character of $K$. Then, if $S$ contains $S_p(K)$, Kummer theory induces an identification
$$U_{F,S(F)}=H^1(\cO_{F,S(F)}, \cO(1) \otimes \chi^{-1}).$$
This observation implies that, for any such $S$, the definition of $p$-adic Euler systems of rank $r$ for $(\cK/K,S,\chi)$ coincides with that of Euler systems of rank $r$ for $(\cO(1)\otimes\chi^{-1}, \cK)$ in the sense of \cite[Def. 6.5]{bss} (with $K$ and $S$ fixed).
\end{remark}

\begin{example}\label{ex2}
Set $\mathcal{L}:=L \cK$.

\noindent{}(i) Suppose that a motivic Euler system for $(\mathcal{L}/K,S)$
$$u=(u_E)_{E} \in \prod_{E \in \Omega(\cL/K) \setminus \{ K\}} \QQ \otimes_\ZZ{\bigwedge}_{\ZZ[\cG_E]}^r \cO_{E,S(E)}^\times$$
is given.
For $F \in \Omega(\cK/K)$, set
$$c_F:=u_{LF}^\chi \in \left( \QQ \otimes_\ZZ {\bigwedge}_{\ZZ[\cG_{LF}]}^r \cO_{{LF},S({F})}^\times \right)_\chi= \QQ \otimes_\ZZ {\bigwedge}_{\cO[\cG_F]}^r U_{F,S(F)}.$$
Then, if each $c_F$ belongs to ${\bigcap}_{\cO[\cG_F]}^r U_{F,S(F)}$, the collection
$$c=(c_F)_{F } \in \prod_{F \in \Omega(\cK/K)} {\bigcap}_{\cO[\cG_F]}^r U_{F,S(F)}$$
is a $p$-adic Euler system for $(\cK/K,S,\chi)$.

\noindent{}(ii) Let $(\eta_{E/K,S(E)}^V)_E$ be the Rubin-Stark Euler system for $(\mathcal{L}/K,S)$ discussed in Example~\ref{ex1}(iii). Assume that $U_F$ is $\cO$-free for every $F$ in $\Omega(\cK/K)$. Then the $(p,\chi)$-component of the Rubin-Stark conjecture (see \cite[Conj. B$'$]{rubinstark} or \cite[Conj. 5.1]{bks1}) asserts that
$$\eta_{LF/K,S(F)}^{V,\chi} \in {\bigcap}_{\cO[\cG_F]}^r U_{F,S(F)}.$$
Furthermore, since $V$ is contained in $S_{\infty}(K)$, the argument of \cite[Prop. 6.2(i)]{rubinstark} implies that $\eta_{LF/K,S(F)}^{V,\chi}$ lies in ${\bigcap}_{\cO[\cG_F]}^r U_F$.
Thus, if the Rubin-Stark conjecture is valid for every $F$ in $\Omega(\cK/K)$, then the collection  $(\eta_{LF/K,S(F)}^{V,\chi})_F$ is a strict $p$-adic Euler system.
\end{example}

\subsection{The definitions of Stark and Kolyvagin systems for $\GG_m$}\label{sec koly}

Fix a finite abelian $p$-extension $F/K$ and set
$$\Gamma:=\cG_F=\Gal(F/K).$$
Let $\chi, L, \Delta, \cO$ be as in the previous subsection. For notational simplicity, we set
$$E:=LF.$$
We also fix a finite set $S$ of places of $K$ such that
$$
S_\infty(K) \cup S_{\rm ram}(E/K) \subset S.
$$
Let $m$ be a non-negative integer. Let $H_K$ be the Hilbert class field of $K$ and $K(1)$ the maximal $p$-extension inside $H_K/K$.
We use the following notations:
\begin{itemize}
\item $\cO_m:=\cO/p^m \cO$;
\item $\cP_m:=\{ \fq \notin S\cup S_p(K) \mid \text{$\fq$ splits completely in $E H_K (\mu_{p^m}, (\cO_K^\times)^{1/p^m})$}\}$.
\end{itemize}
For any set $\cQ$ of primes of $K$, we set
$$\cN(\cQ):=\{\text{square-free products of primes in $\cQ$}\}.$$
If $\cQ=\cP_m$, we write
$$\cN_m:=\cN(\cP_m).$$
For $\fq \in \cQ$, let $K(\fq)/K$ be the maximal $p$-extension inside the ray class field modulo $\fq$, and set
$$G_\fq:=\Gal(K(\fq)/K(1)) .$$
%
%
For $\fn \in \cN_m$, we set
$$\nu(\fn):=\# \{\fq \mid \fn\} \text{ (the number of prime divisors of $\fn$)}$$
and
$$
G_\fn:=\bigotimes_{\fq \mid \fn} G_\fq.
$$
Recall that $\Gal(E/K) \simeq \Delta \times \Gamma$.
For any prime $\fq$ of $K$, define
\begin{eqnarray}\label{def v}
v_\fq: E^\times \to \ZZ[\Delta \times \Gamma]; \ a \mapsto \sum_{\sigma \in \Delta \times \Gamma} {\rm ord}_\fQ(\sigma a )\sigma^{-1},
\end{eqnarray}
where $\fQ$ is the fixed place of $E$ lying above $\fq$ and ${\rm ord}_\fQ$ denotes the normalized additive valuation. This map induces
$$(E^\times/p^m)_\chi \to \cO_m[\Gamma],$$
which we denote also by $v_\fq$. We set
$$\cS_m^\fn:=\{a \in (E^\times/p^m)_\chi \mid v_\fq(a)=0 \text{ for every }\fq \nmid \fn\}.$$
If $\fm,\fn \in \cN_m$ and $\fn \mid \fm$, we have an exact sequence
$$0 \to \cS_m^\fn \to \cS_m^\fm \xrightarrow{\bigoplus_{\fq \mid \fm/\fn}v_\fq}\bigoplus_{\fq \mid \fm/\fn} \cO_m[\Gamma].$$
So we obtain a map
$$v_{\fm,\fn}:=\pm {\bigwedge}_{\fq \mid \fm/\fn} v_\fq: {\bigcap}_{\cO_m[\Gamma]}^{r+\nu(\fm)}\cS_m^\fm \to {\bigcap}_{\cO_m[\Gamma]}^{r+\nu(\fn)}\cS_m^\fn$$
for any non-negative integer $r$.
(See \cite[Prop.~A.3]{sbA}.) The sign is appropriately chosen so that $v_{\fm',\fn}=v_{\fm,\fn}\circ v_{\fm',\fm}$ if $\fn \mid \fm \mid \fm'$. (See \cite[\S 3.1]{sbA} for the precise choice.)

\begin{definition}
Let $\cQ $ be a subset of $\cP_m$. Then the $\cO_m[\Gamma]$-module of Stark systems of rank $r$ for $(F/K,\chi,m,\cQ)$ is defined by the inverse limit
$${\rm SS}_r(\cQ)_m={\rm SS}_r(F/K,\chi,\cQ)_m:=\varprojlim_{\fn \in \cN(\cQ)} {\bigcap}_{\cO_m[\Gamma]}^{r+\nu(\fn)}\cS^\fn_m$$
with transition maps $v_{\fm,\fn}$.

\end{definition}

For a Stark system $\epsilon=(\epsilon_\fn)_\fn$ in ${\rm SS}_r(F/K,\chi,\cQ)_m$ and a non-negative integer $j$, we define
$$
I_j(\epsilon):=\sum_{\fn \in \cN(\cQ), \ \nu(\fn)=j} \im(\epsilon_\fn) \subset \cO_m[\Gamma].
$$

For $\fq \in \cP_m$, let $l_\fq: E \to E_\fQ = K_\fq$ be the localization map. We define
\begin{eqnarray}\label{def l}
\widetilde l_\fq: E^\times \to \ZZ[\Delta \times \Gamma] \otimes_\ZZ K_\fq^\times; \ a \mapsto \sum_{\sigma \in \Delta \times \Gamma}\sigma^{-1}\otimes l_\fq(\sigma a).
\end{eqnarray}
This map induces
$$\widetilde l_\fq: (E^\times/p^m)_\chi \to \cO_m[\Gamma]\otimes_\ZZ K_\fq^\times.$$
Let ${\rm rec}_\fq: K_\fq^\times \to \Gal(K(\fq)_\fQ/K_\fq)\simeq G_\fq$ denote the local reciprocity map at $\fq$.
We define
\begin{eqnarray}\label{def phi}
\varphi_\fq : E^\times \to \ZZ[\Delta\times \Gamma]\otimes_\ZZ G_\fq
\end{eqnarray}
to be the composition $(\id \otimes {\rm rec}_\fq) \circ \widetilde l_\fq$.
This map induces
$$\varphi_\fq: (E^\times/p^m)_\chi \to \cO_m[\Gamma]\otimes_\ZZ G_\fq.$$
Let $\Pi_\fq \subset K_\fq^\times/p^m$ be the subgroup defined by
$$\Pi_\fq:=\ker(K_\fq^\times/p^m \to K(\fq)_\fQ^\times/p^m ).$$
Then we see that the valuation map $K_\fq^\times \to \ZZ$ induces an isomorphism
$$\Pi_\fq \xrightarrow{\sim} \ZZ/p^m$$
and we have a canonical decomposition
$$K_\fq^\times/p^m = \Pi_\fq \times \FF_{K_\fq}^\times/p^m.$$
For $\fn \in \cN_m$, we define
$$\cS(\fn)_m:=\{a \in \cS^\fn_m \mid \widetilde l_\fq(a) \in \cO_m[\Gamma]\otimes_\ZZ \Pi_\fq \text{ for every }\fq \mid \fn\}.$$
For $\fq \mid \fn$, we define
$$\cS_\fq(\fn)_m:=\cS(\fn)_m \cap \cS^{\fn/\fq}_m =\ker(\cS(\fn)_m \xrightarrow{\widetilde l_\fq} \cO_m[\Gamma]\otimes_\ZZ K_\fq^\times).$$
Note that, for any $\fn \in \cN_m$ and $\fq \mid \fn$, the maps $v_\fq$ and $\varphi_\fq$ induce maps
$$v_\fq: {\bigcap}_{\cO_m[\Gamma]}^r \cS(\fn)_m \otimes_\ZZ G_\fn \to {\bigcap}_{\cO_m[\Gamma]}^{r-1} \cS_\fq(\fn)_m \otimes_\ZZ G_\fn$$
and
$$\varphi_\fq: {\bigcap}_{\cO_m[\Gamma]}^r \cS(\fn/\fq)_m \otimes_\ZZ G_{\fn/\fq} \to {\bigcap}_{\cO_m[\Gamma]}^{r-1} \cS_\fq(\fn)_m \otimes_\ZZ G_\fn$$
respectively, for any positive integer $r$.

\begin{definition}\label{def koly}
Let $r$ be a positive integer and $\cQ$ a subset of $\cP_m$.
A Kolyvagin system of rank $r$ for $(F/K,\chi,m,\cQ)$ is a collection
$$\kappa=(\kappa_\fn)_\fn \in \prod_{\fn \in \cN(\cQ)} {\bigcap}_{\cO_m[\Gamma]}^r \cS(\fn)_m \otimes_\ZZ G_\fn$$
that has the following property: for every $\fn \in \cN(\cQ)$ and $\fq \mid \fn$, one has
$$v_\fq(\kappa_\fn)=\varphi_\fq(\kappa_{\fn/\fq}) \text{ in } {\bigcap}_{\cO_m[\Gamma]}^{r-1} \cS_\fq(\fn)_m \otimes_\ZZ G_\fn.$$
The set of all Kolyvagin systems of rank $r$ for $(F/K,\chi,m,\cQ)$ is denoted by ${\rm KS}_r(\cQ)_m = {\rm KS}_r(F/K,\chi,\cQ)_m$ and is naturally an $\cO_m[\Gamma]$-module.
\end{definition}

We fix a positive integer $r$ and a subset $\cQ \subset \cP_m$.
We construct a `regulator map'
$$\cR_{r,m}: {\rm SS}_r(\cQ)_m \to {\rm KS}_r(\cQ)_m$$
as follows. For each $\epsilon=(\epsilon_\fn)_\fn$ in ${\rm SS}_r(\cQ)_m = \varprojlim_{\fn \in \cN(\cQ)}{\bigcap}_{\cO_m[\Gamma]}^{r+\nu(\fn)}\cS_m^\fn$, we set
$$\cR_{r,m}(\epsilon)_\fn:=\pm \left( {\bigwedge}_{\fq \mid \fn}\varphi_\fq\right)(\epsilon_\fn) \in {\bigcap}_{\cO_m[\Gamma]}^r \cS(\fn)_m \otimes_\ZZ G_\fn,$$
where the sign is again specified as in \cite[\S 4.2]{sbA}.

Then one checks that $\cR_{r,m}(\epsilon):=(\cR_{r,m}(\epsilon_\fn))_\fn$ belongs to
${\rm KS}_r(\cQ)_m$ (see \cite[Prop.~4.3]{sbA}) and the assignment $\epsilon\mapsto \cR_{r,m}(\epsilon)$ defines the homomorphism $\cR_{r,m}$.

\subsection{Limits of Stark and Kolyvagin systems}\label{limits section}

We write $\omega$ for the Teichm\"uller character of $K$.

In the sequel we shall assume the following hypothesis.

\begin{hypothesis}\label{hyp chi}\
\begin{itemize}
\item[(i)] $\chi\notin \{1, \omega\}$;
\item[(ii)] $\chi^2 \neq \omega$ if $p=3$;
\item[(iii)] $\chi(\fq) \neq 1$ (i.e., $\fq$ does not split completely in $L$) for every
$\fq \in S_{\rm ram}(F/K)$;
\item[(iv)] $r=\# \{ v \in S_\infty(K) \mid \text{$v$ splits completely in $L$}\}>0$.
\end{itemize}
\end{hypothesis}

\begin{remark}\label{rem hyp}
One can check that Hypothesis \ref{hyp chi}(i) implies that the $\cO$-module $U_{F'}$ is free for any finite abelian $p$-extension $F'$ of $K$. 
Also, by Lemma~\ref{lem:ur} below,
Hypothesis~\ref{hyp chi} implies each of the hypotheses (H$_0$), (H$_1$), (H$_2$), (H$_3$), (H$_4$) and (H$_5^{\rm u}$) that are listed in \cite[\S 3.1.3]{bss2} for the representation $T=\cO(1)\otimes \chi^{-1}$ and the field $F$
(see \cite[Lem.~5.3 and 5.4]{bss2}).

\end{remark}

\begin{lemma}\label{lem:ur}
Let $T := \cO(1) \otimes \chi^{-1}$.
Assume Hypotheses~\ref{hyp chi}(i), (ii) and (iii).
Then Hypothesis~(H$_5^{\rm u}$) in \cite[\S 3.1.3]{bss2} is satisfied by the pair $(T,F)$.
\end{lemma}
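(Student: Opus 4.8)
The plan is to unwind what Hypothesis (H$_5^{\rm u}$) of \cite[\S 3.1.3]{bss2} actually asks for in the case $T = \cO(1)\otimes\chi^{-1}$, and to check it place by place. Recall that, for a representation $T$ and a field $F$, the `unramified' hypothesis (H$_5^{\rm u}$) concerns the local behaviour of $T$ at the places that ramify in $F/K$: roughly, it requires that for every $\fq\in S_{\rm ram}(F/K)$ the module of $G_{K_\fq}$-coinvariants (equivalently, the relevant local cohomology group $H^0$ of the residual or integral representation) vanishes, so that no spurious local term enters the Euler system machinery. Since $F/K$ is a $p$-extension and $\chi$ has prime-to-$p$ order, the places in $S_{\rm ram}(F/K)$ are automatically prime to $p$, so $\fq \notin S_p(K)$ and the inertia at $\fq$ acts on $\cO(1)$ through a finite quotient of order prime to $p$ once we pass to $p$-power torsion; in fact on $T/p = \FF\cdot(1)\otimes\chi^{-1}$ the Frobenius $\mathrm{Fr}_\fq$ acts by $\N\fq\cdot\chi(\fq)^{-1}$, and the inertia acts trivially.

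The key computation is therefore this: for $\fq\in S_{\rm ram}(F/K)$ we must show that $H^0(K_\fq, T/p) = (T/p)^{G_{K_\fq}}$ vanishes (and similarly for the full $p$-adic or $p^m$-torsion coefficients, which then follows by Nakayama). Because $\fq\notin S_p(K)$, the action of $G_{K_\fq}$ on $T/p$ is unramified and given on the quotient by $\mathrm{Fr}_\fq \mapsto \N\fq\,\chi(\fq)^{-1}$. An element is fixed precisely when $\N\fq\,\chi(\fq)^{-1}\equiv 1$, i.e. $\chi(\fq) = \N\fq$ in $\FF^\times$. Here is where Hypothesis~\ref{hyp chi}(iii) enters: it guarantees $\chi(\fq)\neq 1$, but I will need slightly more, namely that $\chi(\fq)\ne \N\fq \bmod p$; I expect this to follow from (iii) together with (i) and (ii) (which control the $\omega$-twist and, at $p=3$, the $\chi^2=\omega$ case), since $\N\fq \equiv \omega(\fq)\bmod p$ and the excluded cases are exactly those where $\chi\omega^{-1}$ or related characters become trivial at $\fq$. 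So the argument is: $\chi(\fq)=\N\fq$ in $\FF^\times$ would force $\chi\omega^{-1}$ to be trivial on $\mathrm{Fr}_\fq$, and combined with $\fq$ ramifying in $F/K$ and Hypotheses (i)--(iii) this is impossible.

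Concretely I would proceed as follows. \emph{Step 1:} record that $S_{\rm ram}(F/K)\cap S_p(K)=\emptyset$ since $F/K$ is a $p$-extension, hence each such $\fq$ has $T/p$ unramified as a $G_{K_\fq}$-module with geometric Frobenius acting by the scalar $\N\fq^{-1}\chi(\fq)$ (or its inverse, depending on normalization). \emph{Step 2:} show $(T/p)^{G_{K_\fq}}=0$ by showing this scalar is $\ne 1$ in $\FF^\times$, using Hypotheses~\ref{hyp chi}(i), (ii), (iii) and the congruence $\N\fq\equiv\omega(\fq)$; dually $(T^\ast(1)/p)^{G_{K_\fq}}$, where $T^\ast(1)=\cO(1)\otimes\cO(1)^\ast\otimes\chi = \cO\otimes\chi$ carries $\mathrm{Fr}_\fq\mapsto\chi(\fq)$, also vanishes since $\chi(\fq)\ne 1$ by (iii). \emph{Step 3:} deduce the precise statement of (H$_5^{\rm u}$) — whichever of these local $H^0$-vanishings (or the associated surjectivity of a localization/corestriction map) is the content of the hypothesis in \cite[\S 3.1.3]{bss2} — citing \cite[Lem.~5.3 and 5.4]{bss2} as in Remark~\ref{rem hyp} for the bookkeeping that translates the vanishing into the stated form.

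The main obstacle, I expect, is purely one of matching conventions: pinning down the exact formulation of (H$_5^{\rm u}$) in loc. cit. (which representation — $T$, $T^\ast(1)$, or the residual one — and which cohomological condition) and verifying that Hypothesis~\ref{hyp chi}(iii) is precisely strong enough, with (i) and (ii) handling the degenerate twists at $p=3$. The actual local cohomology computations are elementary once the statement is fixed, so the proof should be short.
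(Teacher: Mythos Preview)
Your proposal rests on a misreading of what (H$_5^{\rm u}$) actually requires, and this leads to two concrete errors.

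First, the claim in Step~1 that $S_{\rm ram}(F/K)\cap S_p(K)=\emptyset$ because $F/K$ is a $p$-extension is simply false: $p$-extensions are very often ramified at $p$-adic places (think of the cyclotomic $\ZZ_p$-extension). So you cannot dismiss the $p$-adic primes at the outset; in fact they are the whole point.

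Second, and more seriously, the content of (H$_5^{\rm u}$) is not the vanishing of $H^0(K_\fq,\overline T)$ at primes ramified in $F$. In the paper's argument, the vanishing of $H^0(K_\fq,\overline T^\vee(1))$ for primes with $\chi(\fq)\neq 1$ is the \emph{trivial} observation (since $\overline T^\vee(1)$ is the character $\chi$), and by \cite[Lem.~3.10]{bss2} this reduces the problem to the remaining primes: namely $\fp\in S_p(K)$ with $\chi(\fp)=1$. For those one must show that the natural map
\[
H^1(K_\fp,\overline T)/H^1_{\cF_{\rm ur}}(K_\fp,\overline T)\;\longrightarrow\;H^1(K_\fp,\overline T\otimes_{\ZZ_p}\ZZ_p[\Gamma])/H^1_{\cF_{\rm ur}}(K_\fp,\overline T\otimes_{\ZZ_p}\ZZ_p[\Gamma])
\]
is injective. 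Via Kummer theory (since $\chi(\fp)=1$) both sides are identified with $(K_\fp^{\times,\wedge}/\cO_{K_\fp}^{\times,\wedge})\otimes\Bbbk$ and $\bigoplus_{\mathfrak P\mid\fp}(F_{\mathfrak P}^{\times,\wedge}/\cO_{F_{\mathfrak P}}^{\times,\wedge})\otimes\Bbbk$, and the map is the diagonal embedding. This is where Hypothesis~\ref{hyp chi}(iii) is actually used: since $\chi(\fp)=1$, (iii) forces $\fp\notin S_{\rm ram}(F/K)$, so $F/K$ is unramified at $\fp$ and the diagonal map is injective. Your attempted use of (iii), namely to rule out $\chi(\fq)=\omega(\fq)$ at a single ramified prime, does not follow from the global hypotheses (i)--(iii) and is in any case aimed at the wrong target.
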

\begin{proof}
Let $\Bbbk$ denote the residue field of $\cO$ and $\overline{T} := T \otimes_{\cO} \Bbbk$ the residual representation of $T$.
Put $(-)^{\vee} := \Hom(-, \bQ_{p}/\bZ_{p})$.

First, we note that Hypotheses~\ref{hyp chi}(i) and (ii) imply Hypothesis~(H$_1$), ~(H$_2$) and (H$_{3}$)
(see \cite[Lem.~5.3 and 5.4]{bss2}) and that, for any prime $\fq$ of $K$, the module
$H^{0}(K_{\fq}, \overline{T}^{\vee}(1))$ vanishes if one has $\chi(\fq) \neq 1$.

Hence, by \cite[Lem.~3.10]{bss2}, we only need to show that,
for any prime $\fp \in S_{p}(K)$ with $\chi(\fp) = 1$,
the map
\[
H^{1}(K_{\fp},\overline{T})/H^{1}_{\cF_{\rm ur}}(K_{\fp},\overline{T}) \to H^{1}(K_{\fp},\overline{T} \otimes_{\bZ_{p}} \bZ_{p}[\Gamma])/H^{1}_{\cF_{\rm ur}}(K_{\fp},\overline{T} \otimes_{\bZ_{p}} \bZ_{p}[\Gamma])
\]
induced by $\Bbbk \hookrightarrow \Bbbk[\Gamma]$ is injective.
Here $\cF_{\rm ur}$ denotes the unramified Selmer structure defined in \cite[Def.~5.1]{MRselmer}, namely,
for $M \in \{T, T \otimes_{\bZ_{p}} \bZ_{p}[\Gamma] \}$, the group
$H^{1}_{\cF_{\rm ur}}(K_{\fp}, M \otimes_{\cO} \Bbbk)$ is defined to be the image of
the universal norm subgroup
\[
H^{1}_{\cF_{\rm ur}}(K_{\fp}, M) := \bigcap_{J}{\rm Cor}_{J/K_{\fp}}(H^{1}(J, M))
\]
where $J$ runs over all finite unramified extensions of $K_{\fp}$.
Since $\chi(\fp) = 1$, Kummer theory implies that
\begin{align*}
H^{1}(K_{\fp}, T) &= K_{\fp}^{\times, \wedge} \otimes_{\bZ_{p}} \cO, \quad
H^{1}(K_{\fp}, T \otimes_{\bZ_{p}} \bZ_{p}[\Gamma]) = \bigoplus_{\mathfrak{P} \mid \fp}F_{\mathfrak{P}}^{\times, \wedge} \otimes_{\bZ_{p}} \cO,
\\
H^{1}_{\cF_{\rm ur}}(K_{\fp}, T) &= \cO_{K_{\fp}}^{\times, \wedge} \otimes_{\bZ_{p}} \cO, \quad
H^{1}_{\cF_{\rm ur}}(K_{\fp}, T \otimes_{\bZ_{p}} \bZ_{p}[\Gamma]) = \bigoplus_{\mathfrak{P} \mid \fp}\cO_{F_\mathfrak{P}}^{\times, \wedge} \otimes_{\bZ_{p}} \cO,
\end{align*}
where $(-)^{\wedge}$ denotes pro-$p$-completion and $\mathfrak{P}$ runs over all primes of $F$ that divide $\fp$.
Hence we conclude that
\begin{align*}
H^{1}(K_{\fp},\overline{T})/H^{1}_{\cF_{\rm ur}}(K_{\fp},\overline{T}) &= (K_{\fp}^{\times, \wedge}/\cO_{K_{\fp}}^{\times, \wedge}) \otimes_{\bZ_{p}} \Bbbk,
\\
H^{1}(K_{\fp},\overline{T} \otimes_{\bZ_{p}} \bZ_{p}[\Gamma])/H^{1}_{\cF_{\rm ur}}(K_{\fp},\overline{T} \otimes_{\bZ_{p}} \bZ_{p}[\Gamma])
 &= \bigoplus_{\mathfrak{P}\mid \fp}(F_{\mathfrak{P}}^{\times, \wedge}/\cO_{F_{\mathfrak{P}}}^{\times, \wedge}) \otimes_{\bZ_{p}} \Bbbk.
\end{align*}
By Hypothesis~\ref{hyp chi}(iii) and $\chi(\fp) = 1$, $F/K$ is unramified at $\fp$ and hence the natural diagonal map
$$
(K_{\fp}^{\times, \wedge}/\cO_{K_{\fp}}^{\times, \wedge}) \otimes_{\bZ_{p}} \Bbbk
\to
\bigoplus_{\mathfrak{P} \mid \fp}(F_{\mathfrak{P}}^{\times, \wedge}/\cO_{F_\mathfrak{P}}^{\times, \wedge}) \otimes_{\bZ_{p}} \Bbbk
$$
is injective, which completes the proof.
\end{proof}

Under Hypothesis~\ref{hyp chi}, it is shown in \cite[\S 4.3]{bss} that for any non-negative integer $m$ there exists a natural isomorphism
${\rm SS}_r(\cP_m)_m \xrightarrow{\sim} {\rm SS}_r(\cP_{m+1})_m$ and a natural surjection ${\rm SS}_r(\cP_{m+1})_{m+1} \twoheadrightarrow {\rm SS}_r(\cP_{m+1})_m$
and hence also a natural surjective homomorphism
$$\pi_m: {\rm SS}_r(\cP_{m+1})_{m+1} \twoheadrightarrow {\rm SS}_r(\cP_{m})_m.$$

In a similar way, it is shown in \cite[\S 5.5]{bss} that Hypothesis \ref{hyp chi} implies the existence of canonical homomorphisms
$$
\pi_m': {\rm KS}_{r}(\cP_{m+1})_{m+1} \to {\rm KS}_{r}(\cP_m)_m
$$
that lie in commutative diagrams of the form
\begin{equation}\label{transition morphisms2}
\xymatrix{
{\rm SS}_r(\cP_{m+1})_{m+1}  \ar[d]_{\pi_m} \ar[r]^{\cR_{r,m+1}} & {\rm KS}_{r}(\cP_{m+1})_{m+1} \ar[d]^{\pi'_m}
\\
{\rm SS}_r(\cP_{m})_{m}  \ar[r]^{\cR_{r,m}} & {\rm KS}_{r}(\cP_m)_m.
}
\end{equation}
Thus, writing
$${\rm SS}_r(F/K,\chi):=\varprojlim_m {\rm SS}_r(\cP_m)_m \,\,\text{ and }\,\, {\rm KS}_r(F/K,\chi):=\varprojlim_m {\rm KS}_r(\cP_m)_m $$
for the inverse limits with respect to the respective transition morphisms $\pi_m$ and $\pi'_m$, one finds that the regulator maps $(\cR_{r,m})_m$ induce a canonical homomorphism of $\cO[\Gamma]$-modules
$$\cR_r: {\rm SS}_r(F/K,\chi) \to {\rm KS}_r(F/K,\chi).$$

For a Stark system $\epsilon=(\epsilon_m)_m$ in ${\rm SS}_r(F/K,\chi)$ and a non-negative integer $j$, we define an ideal $I_j(\epsilon)$ of $\cO[\Gamma]$ by setting
$$I_j(\epsilon):=\varprojlim_m I_j(\epsilon_m) \subset \varprojlim_m \cO_m[\Gamma]=\cO[\Gamma].$$

\begin{remark}\label{rem unr}
Set $\cT:={\rm Ind}_{G_K}^{G_F}(\cO(1)\otimes \chi^{-1})$, where $\cO(1)\otimes \chi^{-1}$ is the representation of $G_K$ discussed in Remark \ref{twisted rep}. Let $\cF_{\rm ur}$ denote the unramified Selmer structure on $\cT$, as defined in \cite[Def.~5.1]{MRselmer} (or \cite[Exam.~2.3]{bss2}). Then, the natural Kummer theory isomorphism
$$
( E^\times )_\chi \simeq H^1(K,\cT)
$$
combines with the argument of Mazur and Rubin in \cite[\S 5.2]{MRselmer} to imply that the modules ${\rm SS}_r(F/K,\chi)$ and ${\rm KS}_r(F/K,\chi)$ defined above respectively coincide with the modules ${\rm SS}_r(\cT,\cF_{\rm ur})$ and ${\rm KS}_r(\cT,\cF_{\rm ur})$ that are defined in \cite[Def. 4.11 and 5.25]{bss}.
\end{remark}

\section{Statements of the main results}\label{sec main}

In this section we fix data $p,\chi,L,\Delta, \cO, \cK$ as in \S \ref{sec first} and data $F, \Gamma, S$ as in \S \ref{sec koly}.
We set $E := FL$ and assume that $S$ contains $S_{\rm ram}(E/K)$. We also assume that the field $\cK$ contains $F$ and the maximal $p$-extension inside the ray class field modulo $\fq$ for all but finitely many primes $\fq $ of $K$. Finally, we recall that $r$ denotes $\# \{ v \in S_\infty(K) \mid \text{$v$ splits completely in $L$}\}$.

We consider the following hypothesis.
\begin{hypothesis}\label{hyp}
Either
\begin{itemize}
\item[(i)] $\cK$ contains a non-trivial $\ZZ_p$-power extension $K_\infty$ of $K$ in which no finite places split completely and $S_{\rm ram}(K_\infty/K)\subset S$, or
\item[(ii)] $L \not \subset K(\mu_p)$.
\end{itemize}
\end{hypothesis}

\begin{remark}\label{ntz}  Hypothesis \ref{hyp}(i) is a standard assumption in the theory of Euler systems for $p$-adic representations. Unfortunately, however, in the setting of Euler systems for $\GG_m$ it is a rather strong restriction since it forces $S$ to contain any $p$-adic place $\mathfrak{p}$ that ramifies in $K_\infty/K$ and then Hypothesis \ref{hyp chi}(iii) requires that $\chi(\mathfrak{p})$ is non-trivial for all such $\mathfrak{p}$. In particular, in the case that $K$ is totally real and $K_\infty$ contains the cyclotomic $\Z_p$-extension of $K$ one must assume that there are no `trivial zeros' of any associated $p$-adic $L$-functions.

Hypothesis \ref{hyp}(ii) is an adequate substitute for (i) that avoids this issue, as first observed by Rubin in \cite[\S 9.1]{R}.
In addition, since $L \not \subset K(\mu_p)$ if and only if $\chi$ is not a power of $\omega$, it is clear that Hypothesis~\ref{hyp}(ii) is a very mild assumption.
\end{remark}

\subsection{The main results} We can now state the main results of this article.

\begin{theorem}\label{th main}
Assume that $\chi$, $S$, $F$ and $r$ satisfy Hypothesis \ref{hyp chi}.
\begin{itemize}
\item[(i)] If Hypothesis \ref{hyp} is valid, then there exists a canonical homomorphism of $\mathcal{O}[[G_K]]$-modules
$$\cD_{F,r}: {\rm ES}_r(\cK/K,S,\chi) \to {\rm KS}_r(F/K,\chi).$$

\item[(ii)] In all cases, the regulator map induces a canonical isomorphism of $\cO[\Gamma]$-modules
$$\cR_r: {\rm SS}_r(F/K,\chi)\xrightarrow{\sim} {\rm KS}_r(F/K,\chi).$$
Furthermore, these modules are both free $\cO[\Gamma]$-modules of rank one
and we have
\[
\im (\kappa_1) = {\rm Fitt}_{\cO[\Gamma]}^0({\rm Cl}(E)_{\chi}) = {\rm Fitt}_{\ZZ[G]}^0({\rm Cl}(E))_\chi.
\]
for any basis $\kappa$ of ${\rm KS}_r(F/K,\chi)$. Here ${\rm Cl}(E)$ denotes the ideal class group of $E$ and $G :=  \Gal(E/K)=\Delta \times \Gamma$.
\end{itemize}
\end{theorem}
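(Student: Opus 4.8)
\textbf{Proof proposal for Theorem \ref{th main}.}

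The plan is to deduce both parts of the theorem by transporting the results of our earlier work \cite{bss}, \cite{bss2} through the identifications recorded in Remarks \ref{twisted rep} and \ref{rem unr}. The first, and most routine, step is to verify that the hypotheses in force here translate into the axiomatic hypotheses of loc. cit.\ applied to the representation $T = \cO(1)\otimes\chi^{-1}$ and the field $F$. By Remark \ref{rem hyp} (which in turn rests on Lemma \ref{lem:ur}), Hypothesis \ref{hyp chi} implies each of (H$_0$)--(H$_4$) and (H$_5^{\rm u}$) from \cite[\S 3.1.3]{bss2}, and Hypothesis \ref{hyp} supplies precisely the `core rank' or `non-triviality at $p$-adic places' input (in the guise of \cite[\S 9.1]{R}) that is needed to run the Euler-system-to-Kolyvagin-system machine without assuming $S \supset S_p(K)$. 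One must also check that the notion of a strict $p$-adic Euler system in Definition \ref{p integral} matches the Euler systems for $(\cO(1)\otimes\chi^{-1},\cK)$ of \cite[Def. 6.5]{bss}; when $S \supset S_p(K)$ this is Remark \ref{twisted rep}, and the general case follows by comparing the localisation conditions defining $U_F$ and $U_{F,S(F)}$ at the finitely many $p$-adic places not in $S$, using that $\chi(\fq)\neq 1$ or that these places are unramified in $E$.

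For part (i), I would then invoke the general construction of the `higher Kolyvagin derivative' homomorphism from \cite{bss}: under the translated hypotheses, loc. cit.\ produces a canonical map from the module of Euler systems for $T$ over $\cK$ to the module of Kolyvagin systems ${\rm KS}_r(\cT,\cF_{\rm ur})$, which by Remark \ref{rem unr} is exactly ${\rm KS}_r(F/K,\chi)$. The $\cO[[G_K]]$-equivariance is built into that construction (the Galois action being by functoriality in $F$ and translation of the Euler-system datum), so part (i) is essentially a matter of assembling these citations and checking that Hypothesis \ref{hyp}(ii) can be used in place of the usual Hypothesis \ref{hyp}(i) throughout — the point being that the only role of the $\ZZ_p$-power extension in \cite{bss} is to control the relevant Iwasawa-theoretic error terms, and Rubin's observation shows $L\not\subset K(\mu_p)$ suffices.

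For part (ii), the bijectivity of $\cR_r$ and the freeness of rank one of ${\rm SS}_r(F/K,\chi)$ and ${\rm KS}_r(F/K,\chi)$ over $\cO[\Gamma]$ are the main structural theorems of \cite{bss} for a Selmer structure of core rank one whose associated `dual Selmer group' is finite; here, via Remark \ref{rem unr}, that dual Selmer group is (the $\chi$-part of) a quotient of the class group of $E$, which is finite, and the core rank equals $r = \#\{v\in S_\infty(K): v \text{ splits in } L\}$ by the global Euler characteristic formula for $\cT$ together with Hypothesis \ref{hyp chi}(iv). The identification $\im(\kappa_1)={\rm Fitt}^0_{\cO[\Gamma]}({\rm Cl}(E)_\chi)$ for a basis $\kappa$ is then the computation of the `$0$-th Kolyvagin ideal' $I_0$ carried out in \cite{bss}, combined with the standard comparison of the unramified Selmer module $H^1_{\cF_{\rm ur}}(K,\cT)^\vee$ with ${\rm Cl}(E)_\chi$ (here Hypothesis \ref{hyp chi}(i), via Remark \ref{rem hyp}, guarantees $U_F$ is $\cO$-free so that no $S$-unit or local contributions intervene); the final equality ${\rm Fitt}^0_{\cO[\Gamma]}({\rm Cl}(E)_\chi) = {\rm Fitt}^0_{\ZZ[G]}({\rm Cl}(E))_\chi$ is the elementary fact that Fitting ideals commute with the flat base change $\ZZ[G]\to\cO[\Gamma]$ coming from the $(p,\chi)$-idempotent. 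The main obstacle I anticipate is not any single deep step but the careful bookkeeping in the first paragraph: namely ensuring that the passage to $(p,\chi)$-components genuinely lets one drop the hypothesis $S\supset S_p(K)$ and work with $U_F$ rather than its $S(F)$-version, since this is exactly the point at which the present treatment departs from \cite{bss2}, and it is where the interplay of Hypotheses \ref{hyp chi}(iii) and \ref{hyp}(ii) at $p$-adic characters trivial on Frobenius must be handled with care.
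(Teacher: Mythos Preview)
Your treatment of part~(ii) is essentially the paper's own: invoke Remarks~\ref{rem hyp} and~\ref{rem unr} to identify the modules with ${\rm SS}_r(\cT,\cF_{\rm ur})$ and ${\rm KS}_r(\cT,\cF_{\rm ur})$, then apply the structural results of \cite{bss2} (specifically \cite[Th.~3.6(ii),(iii)]{bss2}). That part is fine.

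For part~(i), however, there is a genuine gap. You propose to cite the derivative construction of \cite{bss} wholesale and then ``check that Hypothesis~\ref{hyp}(ii) can be used in place of the usual Hypothesis~\ref{hyp}(i) throughout'', treating this as bookkeeping. But the construction in \cite{bss} is set up for Euler systems in the sense of \cite[Def.~6.5]{bss}, which via Remark~\ref{twisted rep} matches Definition~\ref{p integral} \emph{only when $S\supset S_p(K)$}; and the congruence relation underlying the Kolyvagin-derivative map is proved there using the $\ZZ_p$-power extension in an essential way (to pass to an inverse limit in which the target is torsion-free, so one can divide by $M$). Neither of these is available here in general. The paper therefore does \emph{not} simply cite \cite{bss}: it uses \cite[\S6.5]{bss} only to reduce from rank $r$ to rank $1$, and then supplies a fully self-contained rank-one construction in the Appendix. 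The heart of that Appendix is the congruence relation (Theorem~\ref{th cong}), whose proof under Hypothesis~\ref{hyp}(ii) requires a Chebotarev argument (Lemma~\ref{cheb}) producing an auxiliary prime $\fr$ with prescribed Frobenius on $E(\fn/\fq)$ and controlled norm behaviour on the residue field at $\fq$. This is the step Rubin only sketched in \cite[\S9.1]{R}, and filling it in is precisely the new content needed for claim~(i); it is not a matter of translating hypotheses.
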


\begin{remark} We shall refer to the map $\cD_{F,r}$ in Theorem \ref{th main}(i) as the `$F$-relative $r$-th order Kolyvagin derivative' homomorphism for $\mathbb{G}_m$ and $\cK/K$. Its construction will be given explicitly in \S \ref{koly der} below (see, in particular, Proposition \ref{unram ks}).
\end{remark}

\begin{remark} In the setting of Euler systems for general $p$-adic representations (rather than for $\mathbb{G}_m$) an algebraic construction of Euler systems given by the first and third author in \cite{sbA} allows one to show that the higher Kolyvagin derivative homomorphism constructed in loc. cit. is surjective. By adapting the construction of Euler systems in \cite{sbA} to the case at hand one can similarly prove that the homomorphism $\cD_{F,r}$ in Theorem \ref{th main}(i) is surjective. The details of this argument will be given in \cite{bdss}. \end{remark}



{\it In the rest of this section, in order to describe some consequences of Theorem~\ref{th main}, we shall assume the validity of the Rubin-Stark conjecture for all abelian extensions of $K$.}

Let $V := \{ v \in S_\infty(K) \mid \text{$v$ splits completely in $L$}\}$.
Then, for any non-trivial finite abelian extension $F'/K$ with $\#S(F') > \# V =:r$, we have
the Rubin-Stark element $\eta_{LF'/K,S(F')}^{V,\chi}$. In particular, if $\#S > r$,
we then write
\[ \eta^{\rm RS}:=(\eta_{LF'/K,S(F')}^{V,\chi})_{F'}\]
for the Rubin-Stark Euler system in
${\rm ES}_r(\cK_p/K,S,\chi)$, as described in Example~\ref{ex2}(ii).
Here we take $\cK_p$ to be sufficiently large so that it contains the maximal abelian pro-$p$ extension of $K$ unramified outside $p$.
We note, in particular, that $\eta_F^{\rm RS}=\eta_{E/K,S}^{V,\chi}$.

\begin{corollary}\label{cor main 1}
Let $S = S_{\infty}(K) \cup S_{\rm ram}(E/K)$.
Assume that
\begin{itemize}
\item $\#S > r$ and
\item either \begin{itemize}
\item there exists a non-trivial $\ZZ_p$-power extension $K_\infty$ of $K$ in which no finite places split completely and $S_{\rm ram}(K_\infty/K) \subset S_{\rm ram}(E/K)$, or
\item $L \not\subset K(\mu_{p})$.
\end{itemize}
\end{itemize}
Then, if Hypothesis~\ref{hyp chi} is also satisfied, the following claims are valid.
\begin{itemize}
\item[(i)] For every non-negative integer $j$ one has
$$I_j(\cR_r^{-1}(\cD_{F,r}(\eta^{\rm RS})))={\rm Fitt}_{\ZZ[G]}^j({\rm Cl}(E))_\chi.$$
In particular, one has
\begin{eqnarray*}\label{image fitt}
\im(\eta_{F}^{\rm RS} )={\rm Fitt}_{\ZZ[G]}^0({\rm Cl}(E))_\chi.
\end{eqnarray*}
\item[(ii)] There exists a canonical isomorphism of finite $\mathcal{O}[\Gamma]$-modules 
\[ \frac{{\bigcap}_{\cO[\Gamma]}^r U_F}{\cO[\Gamma]\cdot\eta_{F}^{\rm RS}} \simeq \Hom_{\ZZ}\left(\frac{\ZZ[G]}
{{\rm Fitt}_{\ZZ[G]}^0({\rm Cl}(E))},\frac{\QQ}{\ZZ}\right)_{\!\!\chi}.\]
\end{itemize}
\end{corollary}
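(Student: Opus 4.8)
\textbf{Proof proposal for Corollary \ref{cor main 1}.}

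The plan is to exploit Theorem \ref{th main} in both its parts, together with the fact (from Example \ref{ex2}(ii)) that $\eta^{\rm RS}$ is a genuine element of ${\rm ES}_r(\cK_p/K,S,\chi)$, so that the composite $\cR_r^{-1}(\cD_{F,r}(\eta^{\rm RS}))$ makes sense as an element of the free rank-one $\cO[\Gamma]$-module ${\rm SS}_r(F/K,\chi)$. First I would verify that the two displayed alternatives in the hypothesis of the corollary imply Hypothesis \ref{hyp} with the given $S = S_\infty(K)\cup S_{\rm ram}(E/K)$; for the first alternative this is immediate since $S_{\rm ram}(K_\infty/K)\subset S_{\rm ram}(E/K)\subset S$, and the second alternative is literally Hypothesis \ref{hyp}(ii). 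Hence $\cD_{F,r}$ is defined and $\cR_r$ is the isomorphism of Theorem \ref{th main}(ii), so $\epsilon := \cR_r^{-1}(\cD_{F,r}(\eta^{\rm RS}))$ is a well-defined Stark system and the ideals $I_j(\epsilon)$ are defined.

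For part (i), the key input is the general structural fact that for a Stark system $\epsilon$ in the free rank-one module ${\rm SS}_r(F/K,\chi)$ one has $I_j(\epsilon) = \lambda\cdot{\rm Fitt}^j_{\cO[\Gamma]}({\rm Cl}(E)_\chi)$ where $\lambda\in\cO[\Gamma]$ is the coordinate of $\epsilon$ with respect to a basis; this is the Stark-system analogue of the Mazur--Rubin / Sakamoto divisibility theory, and in the language of \cite{bss} it says that the $I_j$ of a basis compute the higher Fitting ideals of the dual of the bounded Selmer module, which here is ${\rm Cl}(E)_\chi$ by Theorem \ref{th main}(ii). Thus it suffices to show $\epsilon$ is itself a \emph{basis} of ${\rm SS}_r(F/K,\chi)$, equivalently that $\cD_{F,r}(\eta^{\rm RS})$ is a basis of ${\rm KS}_r(F/K,\chi)$, equivalently (by the $j=0$ case, using that ${\rm Cl}(E)_\chi$ is finite so ${\rm Fitt}^0_{\cO[\Gamma]}({\rm Cl}(E)_\chi)$ contains a non-zero-divisor) that $I_0(\epsilon)={\rm Fitt}^0_{\cO[\Gamma]}({\rm Cl}(E)_\chi)$, i.e. that $\lambda$ is a unit. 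This last identity is exactly the $(p,\chi)$-part of the relevant case of the equivariant Tamagawa number conjecture, or rather it is what one extracts from the Rubin--Stark Euler system machinery: the image of $\eta^{\rm RS}_F=\eta^{V,\chi}_{E/K,S}$ under the canonical localization/derivative maps generates the zeroth Fitting ideal. Concretely I would argue that $\cD_{F,r}(\eta^{\rm RS})_1$ (the $\fn=1$ component of the Kolyvagin system) is, up to the identifications of \S\ref{sec koly}, just the image of $\eta^{\rm RS}_F$ in ${\bigcap}^r_{\cO_m[\Gamma]}\cS(1)_m = {\bigcap}^r_{\cO_m[\Gamma]} U_{F}/p^m$, so that $\im(\cD_{F,r}(\eta^{\rm RS})_1)=\im(\eta^{\rm RS}_F)$; combined with the displayed formula $\im(\kappa_1)={\rm Fitt}^0_{\cO[\Gamma]}({\rm Cl}(E)_\chi)$ for a basis $\kappa$ of ${\rm KS}_r(F/K,\chi)$ in Theorem \ref{th main}(ii), to conclude it remains to see that $\eta^{\rm RS}$ maps to a \emph{basis}, which follows because the Rubin--Stark Euler system has the maximal possible image — this is the content of the compatibility between the Rubin--Stark conjecture (assumed) and the structure theory, and is where one genuinely uses that $\cK_p$ contains the maximal abelian pro-$p$ extension unramified outside $p$ so that the full Euler system is available to force surjectivity of the derivative. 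The final equality ${\rm Fitt}^j_{\cO[\Gamma]}({\rm Cl}(E)_\chi) = {\rm Fitt}^j_{\ZZ[G]}({\rm Cl}(E))_\chi$ is the standard base-change compatibility of Fitting ideals under the exact idempotent-type functor $(-)_\chi$ on $\ZZ[\Delta]$-modules, valid since $p\nmid\#\Delta$.

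For part (ii), I would apply the general ``higher-rank Stark system implies module-structure'' statement: when ${\rm SS}_r(F/K,\chi)$ is free of rank one with basis $\epsilon$ and $\epsilon$ arises from an Euler system element $\eta^{\rm RS}_F\in{\bigcap}^r_{\cO[\Gamma]}U_F$, there is a canonical exact sequence relating the cokernel ${\bigcap}^r_{\cO[\Gamma]}U_F \big/ \cO[\Gamma]\cdot\eta^{\rm RS}_F$ to the $\cO[\Gamma]$-dual of $\cO[\Gamma]/{\rm Fitt}^0_{\cO[\Gamma]}({\rm Cl}(E)_\chi)$ — this is the Stark-system refinement of the classical ``index = order of class group'' statement, and in the equivariant setting it is precisely the reflection/duality isomorphism of \cite{bss2}. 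I would then pass from the $(p,\chi)$-component back to the $\ZZ[G]$-level, using again $p\nmid\#\Delta$ and the natural identification $\Hom_\cO(\cO[\Gamma]/I,\bQ_p/\bZ_p)\cong \Hom_\ZZ(\ZZ[G]/\widetilde I,\bQ/\bZ)_\chi$ for $I={\rm Fitt}^0_{\cO[\Gamma]}({\rm Cl}(E)_\chi)$ with $\widetilde I={\rm Fitt}^0_{\ZZ[G]}({\rm Cl}(E))$, together with part (i) to identify the Fitting ideal. The main obstacle I anticipate is the verification that $\cD_{F,r}(\eta^{\rm RS})$ is a \emph{basis} (not merely a non-zero element) of ${\rm KS}_r(F/K,\chi)$: this is where the full strength of the Euler system being defined over the large field $\cK_p$ enters, and where one must invoke (as the remark after Theorem \ref{th main} indicates, with details deferred to \cite{bdss}) the surjectivity of the Kolyvagin-derivative map obtained by the algebraic Euler-system construction of \cite{sbA}; absent that, one only gets divisibilities $I_j(\epsilon)\subseteq{\rm Fitt}^j_{\ZZ[G]}({\rm Cl}(E))_\chi$ rather than equalities. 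Everything else is bookkeeping with the $(p,\chi)$-functor and with the ${\bigcap}$-modules and their Fitting-ideal formalism from \cite[App.~A]{sbA}.
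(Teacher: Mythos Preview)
Your overall framework is correct: set $\epsilon := \cR_r^{-1}(\cD_{F,r}(\eta^{\rm RS}))$, write $\epsilon = \lambda\cdot\epsilon^0$ for a basis $\epsilon^0$ of the free rank-one module ${\rm SS}_r(F/K,\chi)$, note that $I_j(\epsilon) = \lambda\cdot{\rm Fitt}^j_{\cO[\Gamma]}({\rm Cl}(E)_\chi)$, and reduce everything to showing $\lambda\in\cO[\Gamma]^\times$. The gap is in your proposed route to this last step. You invoke surjectivity of $\cD_{F,r}$, but (a) that surjectivity is not proved in this paper---it is explicitly deferred to \cite{bdss} in the remark following Theorem~\ref{th main}---and (b) even granting it, surjectivity only says \emph{some} Euler system maps to a basis of ${\rm KS}_r(F/K,\chi)$, not that the specific system $\eta^{\rm RS}$ does. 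So the argument as written is both unavailable here and logically insufficient.

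The paper's route, via the argument of \cite[Th.~5.1(i)]{bss2}, avoids this entirely: it is a Nakayama-plus-analytic-class-number-formula descent. One reduces modulo the augmentation ideal of $\cO[\Gamma]$ (equivalently, passes from $F$ down to $K$); there the image of the Rubin--Stark element $\eta^{V,\chi}_{L/K,S}$ can be computed directly from the class number formula for $L$ and matched with ${\rm Fitt}^0_{\cO}({\rm Cl}(L)_\chi)$, forcing the reduction of $\lambda$ to be a unit; Nakayama then gives $\lambda\in\cO[\Gamma]^\times$. This external analytic input is precisely what is missing from your sketch---the Euler-system machinery alone only produces the inclusion you acknowledge at the end. (The same mechanism appears explicitly in the proof of Corollary~\ref{cor main 2} in \S\ref{second proofs}.) Your outline of part~(ii) is fine in spirit; the paper obtains it by combining the equality $\im(\eta^{\rm RS}_F) = {\rm Fitt}^0_{\ZZ[G]}({\rm Cl}(E))_\chi$ from part~(i) with \cite[Th.~3.27(i)]{bst}.
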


\begin{remark}\
\begin{itemize}
\item[(i)] The assumption ensures that Hypothesis~\ref{hyp} is satisfied for $\cK_{p}$.
\item[(ii)] If $\# (S_{\infty}(K) \cup S_{\rm ram}(E/K)) = r$, then $S_{\rm ram}(E/K)$ is empty and so both $F$ and $L$ are contained in the Hilbert class field $H_{K}$ of $K$.
Hence, for all but finitely many characters $\chi \colon G_{K} \to \overline{\bQ}_p^{\times}$ of finite order,
the condition $\#(S_{\infty}(K) \cup S_{\rm ram}(E/K)) > r$ is satisfied.
\end{itemize}
\end{remark}

In the next result, for any $\cO$-order $\cA$ in $\QQ[G]_\chi$ we write ${\rm TNC}(h^0({\rm Spec}(E)),\cA)$ for the equivariant Tamagawa number conjecture for the pair $(h^0({\rm Spec}(E)),\cA)$, where the motive $h^0({\rm Spec}(E))$ is regarded as defined over $K$. (A precise statement of this conjecture will be given in \S\ref{second proofs} below.)

We also write $\cR_E$ for the order in $\QQ[G]$ given by
$$\{ x \in \QQ[G] \mid x \cdot {\rm Fitt}_{\ZZ[G]}^0({\rm Cl}(E)) \subset {\rm Fitt}_{\ZZ[G]}^0({\rm Cl}(E))\}.$$
%

\begin{corollary}\label{cor main 2}
 Let $S = S_{\infty}(K) \cup S_{\rm ram}(E/K)$.
Assume that
\begin{itemize}
\item $\#S > r$,
\item Hypotheses \ref{hyp chi}, and
\item either \begin{itemize}
\item there exists a non-trivial $\ZZ_p$-power extension $K_\infty$ of $K$ in which no finite places split completely and $S_{\rm ram}(K_\infty/K) \subset S_{\rm ram}(E/K)$, or
\item $L \not\subset K(\mu_{p})$.
\end{itemize}
\end{itemize}
If ${\rm Cl}(L)_\chi$ vanishes, then the conjecture ${\rm TNC}(h^0({\rm Spec}(E)),\ZZ[G]_\chi)$ is valid. In all cases, ${\rm TNC}(h^0({\rm Spec}(E)),\cR_{E,\chi})$ is valid.
\end{corollary}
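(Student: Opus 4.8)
The plan is to derive Corollary \ref{cor main 2} from the combination of Theorem \ref{th main} and Corollary \ref{cor main 1} via the standard dictionary between the equivariant Tamagawa number conjecture (eTNC) for $h^0(\Spec(E))$ and the existence of a Rubin--Stark element that generates a suitable Fitting ideal. First I would recall that, after fixing the odd prime $p$ and the character $\chi$, the $(p,\chi)$-component of $\mathrm{TNC}(h^0(\Spec(E)),\ZZ[G]_\chi)$ is equivalent to an exact statement about the $\cO[\Gamma]$-module structure of the relevant compactly-supported \'etale cohomology complex: concretely, that the image $\im(\eta_F^{\mathrm{RS}})$ of the Rubin--Stark Euler system inside $\bigcap_{\cO[\Gamma]}^r U_F$ computes $\Fitt_{\cO[\Gamma]}^0$ of the relevant transpose cohomology module, which in the $\GG_m$ setting is the ($(p,\chi)$-component of the) class group $\Cl(E)$ together with the cohomology of the complex $R\Gamma_c$. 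This reformulation is essentially the content of the descent formalism of Burns--Flach, made explicit in \cite{bks1} and \cite{bss2}, and I would cite it rather than reprove it.

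Next, the key input is Corollary \ref{cor main 1}(i), which under the stated hypotheses gives $\im(\eta_F^{\mathrm{RS}}) = \Fitt_{\ZZ[G]}^0(\Cl(E))_\chi$, together with the more refined statement $I_j(\cR_r^{-1}(\cD_{F,r}(\eta^{\mathrm{RS}}))) = \Fitt_{\ZZ[G]}^j(\Cl(E))_\chi$ for all $j$, and Corollary \ref{cor main 1}(ii), which identifies the quotient $\bigcap_{\cO[\Gamma]}^r U_F / \cO[\Gamma]\cdot\eta_F^{\mathrm{RS}}$ with $\Hom_\ZZ(\ZZ[G]/\Fitt^0_{\ZZ[G]}(\Cl(E)),\QQ/\ZZ)_\chi$. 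The point is that when $\Cl(L)_\chi$ vanishes, the $\chi$-component of the relevant cohomology is concentrated in a single degree and is isomorphic to $\Cl(E)_\chi$ as an $\cO[\Gamma]$-module; in that case $\Fitt^0_{\cO[\Gamma]}(\Cl(E)_\chi)$ is automatically principal on the generator produced by $\eta_F^{\mathrm{RS}}$ only after one knows the module has projective dimension at most one, which again follows from Hypothesis \ref{hyp chi}(i) (freeness of $U_F$) and the vanishing of $\Cl(L)_\chi$ — so the `higher' Fitting ideals $\Fitt^j$ determine the module up to the precision needed for the eTNC. I would then invoke the criterion (as in \cite[\S 3]{bss2} or \cite[Cor. 2.10, 2.11]{bks1}) that equality of $\im(\eta_F^{\mathrm{RS}})$ with the zeroth Fitting ideal, together with the Euler-system-theoretic fact that $\eta_F^{\mathrm{RS}} = \cR_r(\epsilon)_1$ for a \emph{basis} $\epsilon$ of the free rank-one module $\mathrm{SS}_r(F/K,\chi)$ (this is Theorem \ref{th main}(ii)), is exactly what is needed to verify the eTNC.

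For the second, unconditional assertion — validity of $\mathrm{TNC}(h^0(\Spec(E)),\cR_{E,\chi})$ — I would argue that extending scalars from $\ZZ[G]_\chi$ to the larger order $\cR_{E,\chi}$ has the effect of forcing $\Fitt^0_{\ZZ[G]}(\Cl(E))_\chi$ to become an invertible $\cR_{E,\chi}$-module, since $\cR_E$ is by its very definition the largest order over which this ideal is stable, and hence (being reflexive of rank one over a Gorenstein-type order, using that $\cO[\Gamma]$ is Gorenstein) it is locally free of rank one. Over $\cR_{E,\chi}$ one can therefore always replace $\Cl(E)_\chi$ by a module of projective dimension one in the relevant derived category, so the obstruction that required $\Cl(L)_\chi = 0$ in the first part disappears, and the same descent argument plus Corollary \ref{cor main 1} applies verbatim with $\ZZ[G]_\chi$ replaced by $\cR_{E,\chi}$. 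The main obstacle, and the step I would spend the most care on, is exactly this passage between `$\im(\eta_F^{\mathrm{RS}})$ equals a Fitting ideal' and `the eTNC holds': one must match the normalisation of the Rubin--Stark element (the choice of period/regulator and the leading-term formula for the relevant Artin $L$-function at $s=0$) with the Galois-cohomological Euler characteristic appearing in the conjecture, and must check that the freeness of $\mathrm{SS}_r(F/K,\chi)$ over $\cO[\Gamma]$ — rather than merely its rank — is what rules out the spurious ``error terms'' that in earlier approaches were controlled only after imposing no-trivial-zero hypotheses. Everything else is bookkeeping with Fitting ideals and the already-established structure theory of Stark and Kolyvagin systems.
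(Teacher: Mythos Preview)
Your proposal has the right general shape but misses two specific mechanisms that the paper's proof relies on, and one of your claimed steps does not follow from what you cite.

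\textbf{The analytic class number formula is the missing reverse containment.} You argue that the equality $\im(\eta_F^{\rm RS})=\Fitt^0_{\ZZ[G]}(\Cl(E))_\chi$ from Corollary~\ref{cor main 1}(i), together with $\eta_F^{\rm RS}$ coming from a basis of ${\rm SS}_r$, ``is exactly what is needed to verify the eTNC.'' But Theorem~\ref{th main}(ii) does \emph{not} say that the Rubin--Stark system is a basis of ${\rm SS}_r$; it says that for any basis $\kappa$ one has $\im(\kappa_1)=\Fitt^0$. The equality $\im(\eta_F^{\rm RS})=\Fitt^0$ only tells you that the scalar relating $\cD_{F,r}(\eta^{\rm RS})$ to a basis lies in the stabiliser $\cR_{E,\chi}$, not that it is a unit in $\cO[\Gamma]$. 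Concretely, the paper first uses \cite[Th.~4.14]{bss2} to obtain $(\im(\eta_{E/K,S}^V),0)=\Fitt^0(\ker f)\cdot\Xi(E)$, then combines this with Corollary~\ref{cor main 1}(i) and the identification $\ker(f)_\chi=\Cl(E)_\chi$ (which uses Hypothesis~\ref{hyp chi}(iii)) to get only the \emph{inclusion} $\cR_{E,\chi}\cdot\Xi(E)_\chi\subset(\cR_{E,\chi},0)$. The reverse containment is then obtained by Nakayama's Lemma together with the analytic class number formula for the intermediate fields of $L/K$ (as in the proof of \cite[Th.~5.1(ii)]{bss2}). This step is essential and is absent from your sketch.

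\textbf{The $\Cl(L)_\chi=0$ case is simpler than you make it.} You propose to argue via projective dimension $\le 1$ and concentration of cohomology. The paper instead shows directly that $\Cl(L)_\chi=0$ together with Hypothesis~\ref{hyp chi}(iii) forces $\Cl(E)_\chi=0$ (by a Nakayama-type descent through $E/L$), whence $\Fitt^0_{\ZZ[G]}(\Cl(E))_\chi=\ZZ[G]_\chi$ and trivially $\cR_{E,\chi}=\ZZ[G]_\chi$. Your invertibility argument for $\cR_{E,\chi}$ in the general case is also different in flavour from the paper's: there one simply observes that $\Fitt^0\cdot\Xi(E)_\chi\subset(\Fitt^0,0)$ already means $\Xi(E)_\chi$ lies in the stabiliser $\cR_{E,\chi}$, with no appeal to local freeness or reflexivity.
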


\begin{remark} Corollaries \ref{cor main 1} and \ref{cor main 2} are a significant refinement of the corresponding results in \cite[Th.~5.1]{bss2} since the homomorphism $\chi$ is here allowed to be trivial on the decomposition subgroup of any $p$-adic place of $K$ that is unramified in $F$. 
\end{remark}

\begin{remark} The Rubin-Stark Conjecture is known to be valid in the context of abelian extensions of either $\QQ$ or of imaginary quadratic fields (cf. Example \ref{ex1}(iii)) and so Corollaries \ref{cor main 1} and \ref{cor main 2} are unconditionally valid in both of these cases. In particular, in the setting of abelian extensions of an imaginary quadratic field $K$, Corollary \ref{cor main 2} improves upon the main result of Bley in \cite{bley} since it avoids the assumption that $p$ splits in $K/\QQ$ which plays a key role in the argument of loc. cit.\end{remark}

\begin{remark} Aside from the special case that ${\rm Cl}(L)_\chi$ vanishes, Corollary \ref{cor main 2} leaves one with the problem of obtaining an explicit description of the order $\cR_{E,\chi}$ and this is in general difficult. Nevertheless, one can make some concrete remarks.

\noindent{}(i) The argument of \cite[Lem. 4.22]{bss2} implies $\cR_{E,\chi}$ is equal to $\ZZ[G]_\chi$ if and only if ${\rm Cl}(E)_\chi$ is a cohomologically-trivial $G$-module. It can be shown that Hypothesis \ref{hyp chi}(iii) implies that this condition is satisfied if and only if for every subgroup $P$ of $G$ of order $p$, the natural `inflation' map ${\rm Cl}(E^P)_\chi\to {\rm Cl}(E)_\chi$ is injective.

\noindent{}(ii) In all cases one has $\cR_{E,\chi}\subset \mathcal{M}_{E,\chi}$, with $\mathcal{M}_E$ the integral closure of $\ZZ$ in $\QQ[G]$. This fact combines with Corollary \ref{cor main 2} and the observation made in \cite[Rem. 5.2(iv)]{bss2} to show that, under the hypotheses that are fixed in this section, the Rubin-Stark Conjecture directly implies the $p$-part of the Strong-Stark Conjecture of Chinburg \cite[Conj.~2.2]{chin83} for all characters of $G$ that extend $\chi$.
\end{remark}

The next result refers to the `refined class number formula' that was independently conjectured by Mazur and Rubin in \cite[Conj. 5.2]{MRGm} and the third author in \cite[Conj. 3]{sano}. A precise statement of this result will be given in Theorem \ref{main} below.

\begin{corollary}[{Theorem \ref{main}}]\label{cor main 3} If $F $ contains $K(1)$, then for every ideal $\fn$ in $\cN_0$ the refined class number formula conjecture of Mazur-Rubin and Sano is valid for the data $(F/K,S ,\chi,\fn)$.
\end{corollary}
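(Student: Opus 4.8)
The plan is to recognise the Mazur--Rubin--Sano formula for the data $(F/K,S,\chi,\fn)$ as an instance of the relations built into the module of Kolyvagin systems (equivalently, via $\cR_r$, of Stark systems), and then to realise the relevant Kolyvagin system as the image of the Rubin--Stark Euler system under the machinery of Theorem~\ref{th main}.

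I would begin by recalling the precise statement of Conjecture~\ref{mrs}, and by recording one preliminary observation. Since $F$ contains $K(1)$ and each prime $\fq\in\cP_0$ is unramified in $E$ but totally ramified in $K(\fq)$, the extensions $F$ and $K(\fn)$ are linearly disjoint over $K(1)$, so that there is a canonical identification $\Gal(FK(\fn)/F)\simeq G_\fn=\bigotimes_{\fq\mid\fn}G_\fq$. Consequently the Kolyvagin derivative operator $D_\fn$ acts on the Rubin--Stark element $\eta^{V,\chi}_{FK(\fn)L/K,S(\fn)}$ (with $S(\fn):=S\cup\{\fq\mid\fn\}$) in exactly the manner envisaged in the definitions of Stark and Kolyvagin systems in \S\ref{sec koly}, and the standard congruences relating $D_\fn$ to the norm maps $\N_{FK(\fn)/F}$ show that the Kolyvagin-derivative recipe (reduction modulo $p^m$ followed by descent to $E$) attaches to $\eta^{V,\chi}_{FK(\fn)L/K,S(\fn)}$ a well-defined class $\bar\kappa_\fn\in{\bigcap}_{\cO_m[\Gamma]}^r \cS(\fn)_m \otimes_\ZZ G_\fn$, with $\bar\kappa_1=\eta^{\rm RS}_F$. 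With this in hand, Conjecture~\ref{mrs} for $(F/K,S,\chi,\fn)$ unwinds to precisely an identity among the classes $\bar\kappa_\fm$ ($\fm\mid\fn$) of the form that appears in Definition~\ref{def koly} (equivalently, in the transition maps $v_{\fm,\fn}$ defining ${\rm SS}_r(F/K,\chi)$).

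The proof then proceeds as follows. Taking an ambient field $\cK$ large enough to contain every $FK(\fn)L$, the Rubin--Stark Euler system $\eta^{\rm RS}$ of Example~\ref{ex2}(ii) lies in ${\rm ES}_r(\cK/K,S,\chi)$, and for every $m$ the collection $(\bar\kappa_\fn)_{\fn\in\cN_m}$ constructed above is, as those same congruences show, a Kolyvagin system in ${\rm KS}_r(\cP_m)_m$; these are compatible under the transition maps $\pi_m'$, so they assemble into an element $\kappa\in{\rm KS}_r(F/K,\chi)$ with base class $\eta^{\rm RS}_F$ (when Hypothesis~\ref{hyp} holds this $\kappa$ is simply $\cD_{F,r}(\eta^{\rm RS})$). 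Because $\kappa$ lies in the inverse limit defining ${\rm KS}_r(F/K,\chi)$ it satisfies the full system of Kolyvagin-system relations, and specialising these to the ideal $\fn$ is exactly the assertion of Conjecture~\ref{mrs} for $(F/K,S,\chi,\fn)$. Alternatively, since the regulator $\cR_r$ is bijective by Theorem~\ref{th main}(ii), one may instead transport $\kappa$ to the Stark system $\cR_r^{-1}(\kappa)$ and read the same conclusion off the transition maps $v_{\fm,\fn}$.

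The step I expect to be the main obstacle is the explicit comparison underlying the last paragraph, namely that the Kolyvagin system attached to $\eta^{\rm RS}$ by the general machinery really does have, at each finite level, the $\bar\kappa_\fn$ as its components, and that these carry the normalisation prescribed by Conjecture~\ref{mrs}. Establishing this amounts to reconciling the Euler system norm relations satisfied by the family $(\eta^{V,\chi}_{FK(\fn')L/K,S(\fn')})_{\fn'}$ with the local congruences at the Kolyvagin primes that govern the descent of the operators $D_\fn$, while keeping careful track of the sign conventions of \cite{sbA}; the hypothesis $F\supseteq K(1)$ is indispensable here precisely because it is what guarantees that $\Gal(FK(\fn)/F)$ is the group $G_\fn$ on which those operators are defined. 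Granting this comparison, the rest of the argument is formal.
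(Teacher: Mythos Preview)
Your proposal contains a genuine gap: you have misidentified what Conjecture~\ref{mrs} actually asserts. The conjecture is \emph{not} the statement that the Kolyvagin derivatives $(\bar\kappa_\fn)_\fn$ of the Rubin--Stark Euler system satisfy the axioms of Definition~\ref{def koly}. Rather, it is a comparison between \emph{two different} objects built from \emph{two different} families of Rubin--Stark elements. On the left of Conjecture~\ref{mrs} sits $\theta_\fn=\sum_{\sigma\in\cH_\fn}\sigma c_\fn\otimes\sigma^{-1}$, where $c_\fn=\eta_{LF(\fn)/K,S_\fn}^{V,\chi}$ is the rank-$r$ Rubin--Stark element for the \emph{large} field $LF(\fn)$; on the right sits ${\rm Rec}_\fn(\epsilon_\fn)$, where $\epsilon_\fn=\eta_{LF/K,S_\fn}^{V_\fn,\chi}$ is the rank-$(r+\nu(\fn))$ Rubin--Stark element for the \emph{small} field $LF$. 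The Kolyvagin-system relation $v_\fq(\kappa_\fn)=\varphi_\fq(\kappa_{\fn/\fq})$ is an internal consistency condition on a single system and says nothing about whether these two a priori unrelated objects agree.

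The paper's argument proceeds by constructing \emph{two} Kolyvagin systems: one, $\kappa=\Psi((s_\fn(\theta_\fn))_\fn)$, from the Euler system side (Proposition~\ref{unram ks}), and another, $\cR(\epsilon)=\Psi((s_\fn({\rm Rec}_\fn(\epsilon_\fn)))_\fn)$, from the Stark system side (Proposition~\ref{prop reg}). Theorem~\ref{thm2} shows that Conjecture~\ref{mrs} for all $\fn$ is equivalent to the equality $\kappa=\cR(\epsilon)$ in ${\rm KS}_r(F/K,\chi)$. To prove this equality one \emph{must} use the structural result of Theorem~\ref{th main}(ii) that ${\rm KS}_r(F/K,\chi)$ is free of rank one over $\cO[\Gamma]$: writing $\kappa=a\cdot x$ and $\cR(\epsilon)=b\cdot x$ for a basis $x$, the trivial case $\fn=1$ gives $\kappa_1=c_1=\epsilon_1=\cR(\epsilon)_1$, so $(a-b)x_1=0$; finally one checks ${\rm Ann}_{\cO[\Gamma]}(x_1)=0$ via $\im(x_1)={\rm Fitt}_{\cO[\Gamma]}^0({\rm Cl}(E)_\chi)$ and the finiteness of ${\rm Cl}(E)$. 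In your sketch the rank-one theorem appears only as an optional alternative, but in fact it is the linchpin of the whole argument: without it there is no mechanism to propagate the trivial agreement at $\fn=1$ to all $\fn$.
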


\begin{remark} The refined class number formula is stated precisely as Conjecture \ref{mrs} below and was originally formulated as a generalization of a conjecture of Darmon from \cite{D} (for more details see Remark \ref{rem4} and the beginning of \S\ref{idea}). If $r=1$ and $F=K$, then Mazur and Rubin have given a full proof of this conjecture in certain cases (see \cite[\S 10]{MRGm}) but if $r > 1$ they are only able to give partial evidence for the conjecture and, even then, only under the assumption that $F = K$ and Leopoldt's Conjecture is valid (see  \cite[\S 11]{MRGm}). In contrast, by using the methods developed in \cite{sbA}, \cite{bss} and \cite{bss2} we are now able both to deal with the case $r > 1$ without assuming Leopoldt's Conjecture and also to treat the general (and much more difficult) case that $F$ is not equal to $K$. \end{remark}

\subsection{The proof of Theorem \ref{th main}}

%
To construct the homomorphism $\cD_{F,r}$, it is sufficient to construct, for each natural number $m$, a canonical homomorphism
$${\rm ES}_r(\cK/K,S,\chi) \to {\rm KS}_r(\cP_m)_m$$
that is compatible with the transition morphisms $\pi_m'$ in diagram (\ref{transition morphisms2}) as $m$ varies.

In addition, by the general reduction arguments that are used in \cite[\S6.5]{bss} to prove
\cite[Th.~6.12]{bss}, the functorial behaviour of exterior power biduals allows one to deduce the construction of such homomorphisms for general $r$ from the special case that $r=1$.

In the case $r=1$, we shall give an explicit construction of suitable homomorphisms in the Appendix and, in this way, we obtain a proof of claim (i).


We note that, for any $\kappa \in {\rm KS}_{r}(F/K,\chi)$, the ideal $I_0(\cR_r^{-1}(\kappa))$ is equal to the image of $\kappa_{1}$. Hence claim (ii) follows directly from Remarks~\ref{rem hyp} and \ref{rem unr} and the argument of \cite[Th.~3.6(ii) and (iii)]{bss2} (see also \cite[Prop.~5.5]{MRselmer} or \cite[Ex.~2.7(i)]{bss2}).
\qed

\subsection{The proof of Corollaries \ref{cor main 1} and \ref{cor main 2}}

\subsubsection{}We consider the representation $\cT := {\rm Ind}_{G_K}^{G_F} (\cO(1) \otimes \chi^{-1})$ and Selmer structure $\cF_{\rm ur}$ discussed in Remark \ref{rem unr}.

From that remark we know that ${\rm KS}_r(F/K,\chi)$ coincides with the module ${\rm KS}_r(\cT,\cF_{\rm ur})$ defined in \cite{bss} and so
 Theorem \ref{th main}(i) implies that $\cD_{F,r}(c)$ belongs to ${\rm KS}_r(\cT,\cF_{\rm ur})$ for every $c$ in ${\rm ES}_r(\cK_p/K,S,\chi)$.

Given this containment, and the fact that $r > 0$ (by Hypothesis \ref{hyp chi}(iv)), the first assertion of claim (i) of Corollary \ref{cor main 1} follows directly from the argument in the proof of \cite[Th. 5.1(i)]{bss2}.

The second assertion of claim (i) is then an immediate consequence of the first assertion and the fact that $I_0(\cR_r^{-1}(\cD_{F,r}(\eta^{\rm RS})))$ is equal to the image of $\eta_{F}^{\rm RS}$.

%
%

To complete the proof of Corollary \ref{cor main 1} it suffices to note that claim (ii) follows directly upon combining the equality $\im(\eta_{F}^{\rm RS} )={\rm Fitt}_{\ZZ[G]}^0({\rm Cl}(E))_\chi$ with the argument used by Tsoi and the first and third authors to prove \cite[Th. 3.27(i)]{bst}.
\qed


\subsubsection{}\label{second proofs} Before proving Corollary \ref{cor main 2}, we give a precise statement of ${\rm TNC}(h^0({\rm Spec}(E)),\cA)$ for any $\cO$-order $\cA$ in $\QQ[G]_\chi$.

To do this we recall that $S = S_{\infty}(K) \cup S_{\rm ram}(E/K)$ and that the canonical `Weil-\'etale cohomology' complex
\[ C_{E,S} := R\Hom_\ZZ(R\Gamma_{c}((\mathcal{O}_{E,S})_{\mathcal{W}}, \ZZ),\ZZ)[-2]\]
that is defined by Kurihara and the first and third authors in \cite[\S2.2]{bks1} is a perfect complex of $\ZZ[G]$-modules that is acyclic outside degrees zero and one and such that $H^0(C_{E,S})= \mathcal{O}_{E,S}^\times$ and $H^1(C_{E,S})$ lies in a canonical exact sequence
\begin{equation}\label{can ses2} 0 \to {\rm Cl}_{S}(E) \to H^1(C_{E,S}) \to X_{E,S} \to 0.\end{equation}
Here ${\rm Cl}_{S}(E)$ is the ideal class group of $\mathcal{O}_{E,S}$ and for any set of places $\Sigma$ of $K$ we write $X_{E,\Sigma}$ for the submodule of the free abelian group $Y_{E,\Sigma}$ on the set of places $\Sigma_E$ comprising elements whose coefficients sum to zero.

In particular, the Dirichlet regulator map gives a canonical isomorphism
\[ \lambda_{E}: \RR\otimes_\ZZ H^0(C_{E,S}) = \RR\otimes_\ZZ \mathcal{O}_{E,S}^\times \to \RR\otimes_\ZZ X_{E,S} = \RR\otimes_\ZZ H^1(C_{E,S})\]
and hence (via the approach of \cite[\S3.2]{bks1}) induces a canonical isomorphism of graded $\mathbb{R}[G]$-modules $\vartheta_{\lambda_{E}} : \RR\otimes_\ZZ{\det}_{\ZZ[G]}( C_{E,S}) \to (\RR[G],0),$
where ${\det}_{\ZZ[G]}(-)$ denotes the Knudsen-Mumford determinant functor.

The $S$-truncated equivariant $L$-function for $E/K$ is defined by
$$
\theta_{E/K,S}(s):=\sum_{\chi \in \widehat G}L_{K,S}(\chi^{-1},s)e_{\chi},
$$
where $L_{K,S}(\chi^{-1},s)$ is the $S$-truncated Artin $L$-series of $\chi^{-1}$. In particular, the leading term $\theta_{E/K,S}^\ast(0)$ at $s=0$ of $\theta_{E/K,S}(s)$ belongs to $\RR[G]^\times$ and so we may define a graded invertible $\ZZ[G]$-sublattice of $(\RR[G],0)$ by setting
\[
\Xi(E) := \theta_{E/K,S}^\ast(0)\cdot\vartheta_{\lambda_{E}}({\det}_{\ZZ[G]}( C_{E,S}))^{-1}.
\]
Then the conjecture ${\rm TNC}(h^0({\rm Spec}(E)),\cA)$ asserts that there is an equality in $(\RR[G]_\chi,0)$
$$
\cA\cdot \Xi(E)_\chi = (\cA,0).
$$

%
%


As a first step in the proof of Corollary \ref{cor main 2} we note that the argument of \cite[Th.~4.14]{bss2} implies an equality
\[
(\im(\eta_{E/K,S}^{V}),0) = {\rm Fitt}_{\ZZ[G]}^0(\ker(f))\cdot\Xi(E),
\]
where $f$ is the projection $H^1(C_{E,S}) \twoheadrightarrow X_{E,S} \twoheadrightarrow Y_{E,V}$ induced by (\ref{can ses2}).

In particular, since Hypothesis \ref{hyp chi}(iii) combines with (\ref{can ses2}) to imply $\ker(f)_\chi = {\rm Cl}(E)_\chi$, the last displayed equality combines with the second assertion of Corollary \ref{cor main 1}(i) to give an inclusion of graded ideals ${\rm Fitt}_{\ZZ[G]}^0({\rm Cl}(E))_\chi\cdot\Xi(E)_\chi \subset ({\rm Fitt}_{\ZZ[G]}^0({\rm Cl}(E))_\chi,0)$, and hence also an inclusion
\[
\cR_{E,\chi}\cdot\Xi(E)_\chi \subset (\cR_{E,\chi},0).
\]

To verify ${\rm TNC}(h^0({\rm Spec}(E)),\cR_{E,\chi})$ we need to prove that this inclusion is an equality, and by the argument used to prove \cite[Th. 5.1(ii)]{bss2} this can be deduced directly by using Nakayama's Lemma and the analytic class number formula for each intermediate field of $L/K$.

It now only remains only to prove the first assertion of Corollary \ref{cor main 2} and for this it is clearly enough to show that if ${\rm Cl}(L)_\chi$ vanishes, then
$\cR_{E,\chi} = \ZZ[G]_\chi$. This is true since the vanishing of ${\rm Cl}(L)_\chi$ combines with Hypothesis \ref{hyp chi}(iii) to imply that
 ${\rm Cl}(E)_\chi$ vanishes (again, by the same argument as in \cite[Th. 5.1(ii)]{bss2}).

\section{The refined class number formula}\label{sec rcnf}
In this section we state and prove a precise version of Corollary \ref{cor main 3}.

\subsection{Statement of the result}\label{formulation}

We first review the formulation of the (`$(p,\chi)$-component' of a certain special case of the) conjecture of Mazur and Rubin \cite[Conj. 5.2]{MRGm} and of the third author \cite[Conj. 3]{sano}.

To do this we recall some notation.
Let $F,\chi,L,\Delta,\Gamma,\cO$ be as in the previous section. We assume that $F$ contains the maximal $p$-extension $K(1)$ of $K$ inside its Hilbert class field $H_K$.
We use the following notations:
\begin{itemize}
\item $S:=S_\infty(K)\cup S_{\rm ram}(LF/K)$;
\item $\cP:=\{ \fq \notin S\cup S_p(K) \mid \text{$\fq$ splits completely in $LF H_K$}\}$;
\item $\cN:=\cN(\cP)=\{\text{square-free products of primes in $\cP$}\}$.
\end{itemize}
($\cN$ coincides with $\cN_0$ defined in \S \ref{sec koly}.)
For each $\fq$ in $\cP$ we write $K(\fq)/K$ for the maximal $p$-extension of $K$ inside its ray class field modulo $\fq$.

For each $\fn$ in $\cN$ we use the following notations:
\begin{itemize}
\item $\nu(\fn):=\# \{\fq \mid \fn\}$;
\item $K(\fn):=\prod_{\fq \mid \fn} K(\fq)$ (compositum);
\item $F(\fn):=F K(\fn)$;
\item $\cG_\fn:=\Gal(F(\fn)/K)\simeq \Gal(LF(\fn)/L)$;
\item $\cH_\fn:=\Gal(F(\fn)/F)\simeq \Gal(LF(\fn)/LF)$ (so $\Gamma=\Gal(F/K)\simeq \cG_\fn /\cH_\fn$);
\item $I_\fn:=\ker(\ZZ[\cH_\fn] \twoheadrightarrow \ZZ)$;
\item $S_\fn:=S \cup \{\fq \mid \fn\}$.
\end{itemize}
We write $V$ for the set of archimedean places of $K$ that split completely in $L$.  We set $r:=\# V$ and assume that $\# S >r >0$.
We set
$$V_\fn:=V\cup\{\fq \mid \fn\}$$
and note both that $\# V_\fn=r+\nu(\fn)$ and that all places in $V_\fn$ split completely in $LF$.

For the moment, we fix $\fn \in \cN$. We consider two Rubin-Stark elements for the data $(LF(\fn)/K,S_\fn,V)$ and $(LF/K,S_\fn,V_\fn)$ respectively:
$$\eta_{LF(\fn)/K,S_\fn}^{V} \in \RR \otimes_\ZZ {\bigwedge}_{\ZZ[\cG_\fn \times \Delta]}^r \cO_{LF(\fn),S_\fn}^\times,$$
$$\eta_{LF/K,S_\fn}^{V_\fn} \in \RR \otimes_\ZZ {\bigwedge}_{\ZZ[\Gamma\times \Delta]}^{r+\nu(\fn)} \cO_{LF,S_\fn}^\times.$$
(See Example \ref{ex1}(iii).)
Note that these elements depend on the choice of places of $LF(\fn)$ (or $LF$ for the latter) lying above places in
$S_\fn$, and also on the ordering of the places in $V_\fn$ (up to sign). See \cite[\S 5.2]{bks1}. These choices are supposed to be fixed at the beginning, and we will not mention them explicitly.

We consider $(p,\chi)$-components of Rubin-Stark elements:
$$c_\fn:=\eta_{LF(\fn)/K,S_\fn}^{V,\chi} \in \left(\RR \otimes_\ZZ {\bigwedge}_{\ZZ[\cG_\fn \times \Delta]}^r \cO_{LF(\fn),S_\fn}^\times\right)_\chi \simeq \RR \otimes_\ZZ {\bigwedge}_{\cO[\cG_\fn]}^r (\cO_{LF(\fn),S_\fn}^\times)_\chi,$$
$$\epsilon_\fn:=\eta_{LF/K,S_\fn}^{V_\fn,\chi} \in \left(\RR \otimes_\ZZ {\bigwedge}_{\ZZ[\Gamma\times \Delta]}^{r+\nu(\fn)} \cO_{LF,S_\fn}^\times\right)_\chi\simeq \RR \otimes_\ZZ {\bigwedge}_{\cO[\Gamma]}^{r+\nu(\fn)} (\cO_{LF,S_\fn}^\times)_\chi.$$

We now suppose that modules
$$
U_{F(\fn),S_{\fn}}:=(\cO_{LF(\fn),S_\fn}^\times)_\chi \text{ and }
U_{F,S_{\fn}}:=(\cO_{LF,S_\fn}^\times)_\chi
$$
are $\cO$-free, and the ($(p,\chi)$-component of the) Rubin-Stark conjecture is true so that
$$
c_\fn \in {\bigcap}_{\cO[\cG_\fn]}^r U_{F(\fn),S_\fn} \text{ and }\epsilon_\fn \in {\bigcap}_{\cO[\Gamma]}^{r+\nu(\fn)} U_{F,S_\fn}.
$$
We recall that, under Hypothesis \ref{hyp chi}, the $\cO$-modules $U_{F(\fn),S_\fn}$ and $U_{F,S_\fn}$ are both free (cf. Remark \ref{rem hyp}).

We shall now formulate the conjecture of Mazur-Rubin and Sano, which asserts that a certain congruence relation holds between Rubin-Stark elements $c_\fn$ and $\epsilon_\fn$.

We review some necessary constructions. First, there is a natural injection
\begin{eqnarray}\label{iota}
\iota_\fn: {\bigcap}_{\cO[\Gamma]}^{r} U_{F,S_\fn} \otimes_\ZZ I_\fn^{\nu(\fn)}/I_\fn^{\nu(\fn)+1} \hookrightarrow {\bigcap}_{\cO[\cG_\fn]}^{r} U_{F(\fn),S_\fn} \otimes_\ZZ \ZZ[\cH_\fn]/ I_\fn^{\nu(\fn)+1}.
\end{eqnarray}
This map is induced by the natural inclusion $I_\fn^{\nu(\fn)}/I_\fn^{\nu(\fn)+1} \hookrightarrow \ZZ[\cH_\fn]/I_\fn^{\nu(\fn)+1}$ and the homomorphism
\begin{eqnarray}\label{incl}
{\bigcap}_{\cO[\Gamma]}^r U_{F,S_\fn} \to {\bigcap}_{\cO[\cG_\fn]}^r U_{F(\fn),S_\fn},
\end{eqnarray}
that arises as the linear dual of the restriction map
\begin{eqnarray*}
{\bigwedge}_{\cO[\cG_\fn]}^r \Hom_{\cO[\cG_\fn]}(U_{F(\fn),S_\fn},\cO[\cG_\fn]) &\simeq& {\bigwedge}_{\cO[\cG_\fn]}^r \Hom_{\cO}(U_{F(\fn),S_\fn},\cO)  \\
&\to& {\bigwedge}_{\cO[\Gamma]}^r \Hom_{\cO}(U_{F,S_\fn},\cO) \simeq {\bigwedge}_{\cO[\Gamma]}^r \Hom_{\cO[\Gamma]}(U_{F,S_\fn},\cO[\Gamma]).
\end{eqnarray*}
The injectivity of $\iota_\fn$ follows from the fact that the cokernel of (\ref{incl}) is torsion-free. (Compare \cite[Lem. 2.11]{sano}.)

Next, there is a `reciprocity homomorphism'
$$
{\rm Rec}_\fn: {\bigcap}_{\cO[\Gamma]}^{r+\nu(\fn)} U_{F,S_\fn} \to {\bigcap}_{\cO[\Gamma]}^r U_{F,S_\fn}\otimes_\ZZ I_\fn^{\nu(\fn)}/I_\fn^{\nu(\fn)+1}.
$$
To construct this, we first define a map $\varphi_\fq^\fn$ for each $\fq \in \cP$ as follows:
$$
\varphi_\fq^\fn: LF^\times \to \ZZ[ \Delta \times \Gamma]\otimes_\ZZ I_\fn/I_\fn^2  ;\ a \mapsto \sum_{\sigma \in  \Delta \times \Gamma}\sigma^{-1} \otimes ({\rm rec}_\fQ(\sigma a)-1),
$$
where $\fQ$ is the fixed place of $LF$ lying above $\fq$ and ${\rm rec}_\fQ: LF^\times  \to \Gal(LF(\fn)/LF)=\cH_\fn$ is the local reciprocity map at $\fQ$. This map induces a map
$$
U_{F,S_\fn} \to \cO[\Gamma] \otimes_\ZZ I_\fn/I_\fn^2,
$$
which is also denoted by $\varphi_\fq^\fn$. Then ${\rm Rec}_\fn$ is defined in \cite[Prop. 2.7]{sano} to be a canonical extension of the map
$$
{\bigwedge}_{\fq \mid \fn}\varphi_\fq^\fn:  {\bigwedge}_{\cO[\Gamma]}^{r+\nu(\fn)} U_{F,S_\fn} \to {\bigwedge}_{\cO[\Gamma]}^r U_{F,S_\fn} \otimes_\ZZ I_\fn^{\nu(\fn)}/I_\fn^{\nu(\fn)+1}.
$$

The conjecture is formulated as follows.

\begin{conjecture}\label{mrs} In the module ${\bigcap}_{\cO[\cG_\fn]}^r U_{F(\fn),S_\fn} \otimes_\ZZ \ZZ[\cH_\fn] /I_\fn^{\nu(\fn)+1}$ one has
$$
\sum_{\sigma \in \cH_\fn} \sigma c_\fn \otimes \sigma^{-1} = \iota_\fn \left( {\rm Rec}_\fn(\epsilon_\fn)\right).
$$
\end{conjecture}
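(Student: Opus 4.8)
The plan is to deduce Conjecture~\ref{mrs} from Theorem~\ref{th main} by realising both sides of the congruence as the image under $\iota_\fn$ of the $\fn$-component of a single Kolyvagin system built from the Rubin--Stark Euler system. Choose $\cK$ large enough that the Kolyvagin derivative $\cD_{F,r}$ of Theorem~\ref{th main}(i) is available and that $\cK$ contains $F(\fn)$ for every $\fn\in\cN$, and let $\eta^{\rm RS}=(\eta_{LF'/K,S(F')}^{V,\chi})_{F'}\in{\rm ES}_r(\cK/K,S,\chi)$ be the Rubin--Stark Euler system of Example~\ref{ex2}(ii). Since $S(F(\fn))=S\cup S_{\rm ram}(K(\fn)/K)=S_\fn$, the term of $\eta^{\rm RS}$ at $LF(\fn)$ is exactly $c_\fn$. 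Applying $\cD_{F,r}$ gives a Kolyvagin system $\cD_{F,r}(\eta^{\rm RS})\in{\rm KS}_r(F/K,\chi)$, and by Theorem~\ref{th main}(ii) there is a unique Stark system $\epsilon^{\rm Kol}\in{\rm SS}_r(F/K,\chi)$ with $\cR_r(\epsilon^{\rm Kol})=\cD_{F,r}(\eta^{\rm RS})$.

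Next I would identify each side of the congruence with a component of these systems. Unwinding the construction of $\cD_{F,r}$ (which is given in the Appendix for $r=1$ and reduced to that case in general) shows that the image under $\iota_\fn$ of the $\fn$-component of $\cD_{F,r}(\eta^{\rm RS})$, computed in ${\bigcap}_{\cO[\cG_\fn]}^r U_{F(\fn),S_\fn}\otimes_\ZZ\ZZ[\cH_\fn]/I_\fn^{\nu(\fn)+1}$ after identifying $G_\fn$ with $I_\fn^{\nu(\fn)}/I_\fn^{\nu(\fn)+1}$, is exactly $\sum_{\sigma\in\cH_\fn}\sigma c_\fn\otimes\sigma^{-1}$; this is the defining feature of the Kolyvagin derivative of an Euler system, in the `$F$-relative' form of \cite{bss}. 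On the other side, set $\epsilon^{\rm RS}:=(\eta_{LF/K,S_\fn}^{V_\fn,\chi})_\fn$. Because $\varphi_\fq$ coincides with $\varphi_\fq^\fn$ under $G_\fq\cong I_\fn/I_\fn^2$, and $\cR_r$ is given at level $\fn$ by $\pm\bigwedge_{\fq\mid\fn}\varphi_\fq$, the $\fn$-component of $\cR_r(\epsilon^{\rm RS})$ corresponds, under $G_\fn\cong I_\fn^{\nu(\fn)}/I_\fn^{\nu(\fn)+1}$, to ${\rm Rec}_\fn(\epsilon_\fn)$, so that $\iota_\fn$ carries it to the right-hand side of Conjecture~\ref{mrs}. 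The conjecture therefore reduces to the single identity $\cD_{F,r}(\eta^{\rm RS})=\cR_r(\epsilon^{\rm RS})$, equivalently to $\epsilon^{\rm Kol}=\epsilon^{\rm RS}$.

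To prove $\epsilon^{\rm Kol}=\epsilon^{\rm RS}$ I would first verify that $\epsilon^{\rm RS}$ is a Stark system: compatibility as $m$ varies is immediate from the integrality of the Rubin--Stark elements, while the transition relation $v_{\fm,\fn}(\epsilon^{\rm RS}_\fm)=\epsilon^{\rm RS}_\fn$ follows from the standard recursion $v_\fq(\eta_{LF/K,S'\cup\{\fq\}}^{V'\cup\{w_\fq\},\chi})=\eta_{LF/K,S'}^{V',\chi}$, valid because every prime dividing $\fn$ splits completely in $LF$. Both $\epsilon^{\rm Kol}$ and $\epsilon^{\rm RS}$ then lie in the free rank-one $\cO[\Gamma]$-module ${\rm SS}_r(F/K,\chi)$ of Theorem~\ref{th main}(ii), and they agree at $\fn=1$: with the sign conventions fixed throughout one has $\cR_r(\epsilon)_1=\epsilon_1$ for every Stark system (the relevant wedge is empty), and $\cD_{F,r}(\eta^{\rm RS})_1=\eta_{LF/K,S}^{V,\chi}=\epsilon^{\rm RS}_1$ (since $V_1=V$ and $S_1=S$). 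Finally the `bottom' map $\epsilon\mapsto\epsilon_1$ is injective on ${\rm SS}_r(F/K,\chi)$: if $\epsilon^b$ is a basis then $\im(\epsilon^b_1)={\rm Fitt}_{\ZZ[G]}^0({\rm Cl}(E))_\chi$ by Theorem~\ref{th main}(ii), and as ${\rm Cl}(E)_\chi$ is finite this ideal contains a power of $p$, a non-zero-divisor in $\cO[\Gamma]$, so $\Ann_{\cO[\Gamma]}(\epsilon^b_1)=0$. Hence $\epsilon^{\rm Kol}=\epsilon^{\rm RS}$, and the conjecture follows.

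The main obstacle is not conceptual --- Theorem~\ref{th main} does the heavy lifting --- but lies in making rigorous the two identifications of the middle paragraph. This requires tracking the construction of $\cD_{F,r}$ through the $r=1$ case of the Appendix and the exterior-power-bidual reduction, with careful control of the sign conventions of \cite{sbA}, of the mod-$p^m$ reductions, and of the identifications between the modules $\cS_m^\fn$ and $\cS(\fn)_m\otimes_\ZZ G_\fn$ of the Kolyvagin-system formalism and the unit groups $U_{F(\fn),S_\fn}$, $U_{F,S_\fn}$ of Conjecture~\ref{mrs} (which themselves rest on the integrality properties of Rubin--Stark elements, cf.\ \cite[Prop.~6.2(i)]{rubinstark}); and it requires the Rubin--Stark recursion invoked above. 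Each of these is routine in the sense that it follows patterns already established in \cite{MRGm}, \cite{sano}, \cite{sbA} and \cite{bss}.
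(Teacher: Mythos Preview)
Your overall strategy---reduce Conjecture~\ref{mrs} to an equality $\cD_{F,r}(\eta^{\rm RS})=\cR_r(\epsilon^{\rm RS})$ of Kolyvagin systems, use that ${\rm KS}_r(F/K,\chi)$ is free of rank one, and verify equality at $\fn=1$ via the Fitting-ideal computation---is exactly the paper's (see \S\ref{idea} and \S\ref{proof main}). The gap is in your middle paragraph: the two identifications you assert there are not correct as stated, and repairing them is the actual content of the paper's argument (\S\ref{koly der}--Theorem~\ref{thm2}), not a routine bookkeeping exercise.

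Concretely, $G_\fn$ does \emph{not} identify with $Q_\fn:=I_\fn^{\nu(\fn)}/I_\fn^{\nu(\fn)+1}$; by Lemma~\ref{lem1}(iii)--(iv) it is only a direct summand, with projector $s_\fn$. The Kolyvagin derivative has $\fn$-component $\cD_{F,r}(\eta^{\rm RS})_\fn=\Psi\bigl((s_\fd(\theta_\fd))_\fd\bigr)_\fn$ (Proposition~\ref{unram ks}), where $\theta_\fn:=\sum_\sigma\sigma c_\fn\otimes\sigma^{-1}$ and $\Psi$ is a certain permutation-sum endomorphism; this lands in the $G_\fn$-summand and is \emph{not} equal to $\theta_\fn$, which lies in all of $Q_\fn$ (Lemma~\ref{lem0}(ii)). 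Likewise, $\cR_r(\epsilon^{\rm RS})_\fn$ is built from the maps $\varphi_\fq^{\rm fs}$ with target $G_\fq$, whereas ${\rm Rec}_\fn$ uses $\varphi_\fq^\fn$ with target $I_\fn/I_\fn^2$; these differ by Frobenius contributions from the other primes $\fq'\mid\fn$, and the precise relation is $\cR(\epsilon)=\Psi\bigl((s_\fn({\rm Rec}_\fn(\epsilon_\fn)))_\fn\bigr)$ (Proposition~\ref{prop reg}). Thus the equality of Kolyvagin systems yields, after inverting the injective map $\Psi$ (Lemma~\ref{lempsi}), only $s_\fn(\theta_\fn)=s_\fn(\iota_\fn({\rm Rec}_\fn(\epsilon_\fn)))$. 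To upgrade this to $\theta_\fn=\iota_\fn({\rm Rec}_\fn(\epsilon_\fn))$ one must run an induction on $\nu(\fn)$ using the recursion $\theta_\fn=s_\fn(\theta_\fn)-\sum_{\fd\mid\fn,\,\fd\neq\fn}(-1)^{\nu(\fn/\fd)}\theta_\fd\prod_{\fq\mid\fn/\fd}P_\fq^\fd$ of Lemma~\ref{lem0}(ii) together with its analogue for ${\rm Rec}_\fn(\epsilon_\fn)$; this is exactly Theorem~\ref{thm2}. Your proposal omits both the $s_\fn/\Psi$ layer and this induction.
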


\begin{remark} \label{rem1}
Since $c_1=\epsilon_1$, Conjecture \ref{mrs} is trivially true in the case $\fn=1$.
\end{remark}

\begin{remark}\label{rem2} The conjecture formulated in \cite[Conj.~5.2]{MRGm} and \cite[Conj.~3]{sano} is actually much more general than the above in that it is formulated for all sets of data of the form $(L'/L/K,S,T,V,V')$, where $L'/L/K$ are finite extensions of global fields such that $L'/K$ is abelian and $S, T, V,V'$ are certain sets of places of $K$. Conjecture \ref{mrs} is simply the $(p,\chi)$-component of the general conjecture applied to the data  $(LF(\fn)/LF/K,S_\fn,\emptyset, V,V_\fn)$. We recall that a further refinement of the general conjecture is given in \cite[Conj. 5.4]{bks1}.
\end{remark}

\begin{remark}\label{rem3}
One can slightly extend the formulation of Conjecture \ref{mrs} in our setting as follows.
Let $\fm$ be any square-free product of primes $\fq$ that do not belong to $S\cup S_p(K)$ (and are also not required to belong to $\cN$). Let $\fm_+$ be the product of prime divisors $\fq$ of $\fm$ that split completely in $LF$ (so that $\fm_+$ belongs to $\cN$). Then, according to Remark~\ref{rem2}, one can formulate Conjecture \ref{mrs} for the data $(LF(\fm)/LF/K,S_\fm,\emptyset, V,V_{\fm_+}).$

However, one can show that Conjecture \ref{mrs} for this data is implied by Conjecture \ref{mrs} for the data $(LF(\fm_+)/LF/K,S_{\fm_+},\emptyset,V,V_{\fm_+})$ and so such a generalization contains no new information.
\end{remark}

\begin{remark}\label{rem4} The formulation of
Conjecture \ref{mrs} is motivated by a conjecture of Darmon from \cite{D}. Darmon's conjecture is obtained by specializing the form of Conjecture \ref{mrs} discussed in Remark \ref{rem3} to the data $(L(n)/L/\QQ,\{\infty\}\cup \{\ell \mid nf\},\emptyset,\{\infty\}, \{\infty\}\cup \{\ell \mid n_+\}),$ where $L$ is a real quadratic field of conductor $f$, $n$ is a square-free product of primes not dividing $fp$, $L(n)/L$ is the maximal $p$-extension inside $L(\mu_n)$, and $\infty$ denotes the infinite place of $\QQ$. For details of this deduction see \cite[\S 6.1]{bks1},  where the case $p=2$ is also treated.\end{remark}





We now state the result. 
Then the following result is a precise version of Corollary~\ref{cor main 3}.

\begin{theorem}\label{main} Conjecture \ref{mrs} is valid for every ideal $\fn$ in $\cN$ whenever all of the following conditions are satisfied;
\begin{itemize}
\item the Rubin-Stark conjecture is valid for all abelian extensions of $K$;
\item Hypothesis~\ref{hyp chi};
\item either \begin{itemize}
\item there exists a non-trivial $\ZZ_p$-power extension $K_\infty$ of $K$ in which no finite places split completely and $S_{\rm ram}(K_\infty/K) \subset S_{\rm ram}(LF/K)$, or
\item $L \not\subset K(\mu_{p})$.
\end{itemize}
\end{itemize}
\end{theorem}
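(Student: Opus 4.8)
The strategy is to interpret \emph{both} sides of the conjectural congruence in Conjecture~\ref{mrs}, as $\fn$ varies, as the value of a single Kolyvagin system in ${\rm KS}_r(F/K,\chi)$, and then to identify that Kolyvagin system in two ways. On the one hand, under the assumed Rubin--Stark conjecture and Hypothesis~\ref{hyp chi}, the collection $\eta^{\rm RS}=(\eta_{LF'/K,S(F')}^{V,\chi})_{F'}$ is a strict $p$-adic Euler system in ${\rm ES}_r(\cK_p/K,S,\chi)$ (Example~\ref{ex2}(ii)); since $S(F(\fn))=S_\fn$ one has $\eta^{\rm RS}_{F(\fn)}=c_\fn$ for every $\fn\in\cN$, and $\eta^{\rm RS}_{F}=\eta_{E/K,S}^{V,\chi}$ because $F\supseteq K(1)$ forces $F(1)=F$. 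Applying the $F$-relative Kolyvagin derivative $\cD_{F,r}$ of Theorem~\ref{th main}(i) (available here thanks to Hypothesis~\ref{hyp}) one obtains $\cD_{F,r}(\eta^{\rm RS})\in{\rm KS}_r(F/K,\chi)$, and the explicit construction of $\cD_{F,r}$ carried out in the Appendix shows that, modulo $I_\fn^{\nu(\fn)+1}$ and up to the fixed sign, $\iota_\fn\bigl(\cD_{F,r}(\eta^{\rm RS})_\fn\bigr)=\sum_{\sigma\in\cH_\fn}\sigma c_\fn\otimes\sigma^{-1}$, i.e. the left-hand side of Conjecture~\ref{mrs}.

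On the other hand, again under the Rubin--Stark conjecture and Hypothesis~\ref{hyp chi} (which via Remark~\ref{rem hyp} supplies the required $\cO$-freeness), the higher-rank Rubin--Stark elements $\epsilon_\fn=\eta_{LF/K,S_\fn}^{V_\fn,\chi}$ assemble into a Stark system $\epsilon^{\rm RS}\in{\rm SS}_r(F/K,\chi)$: compatibility with the transition maps $v_{\fm,\fn}$ is the standard valuation relation among Rubin--Stark elements of consecutive ranks (which, for the completely split primes in $\cP$, carries no Euler factor; cf. \cite{rubinstark} and \cite[\S5]{bks1}). Comparing the definition of the regulator map $\cR_r$ with that of the reciprocity homomorphism ${\rm Rec}_\fn$ one finds that, under the canonical identifications $G_\fq\cong I_\fq/I_\fq^{2}$ and $G_\fn\cong I_\fn^{\nu(\fn)}/I_\fn^{\nu(\fn)+1}$ — which match $\varphi_\fq$ with $\varphi_\fq^\fn$, using that each prime dividing $\fn$ splits completely in $F$ and that $F\supseteq K(1)$ to identify $\Gal(K(\fq)/K(1))$ with the relevant decomposition subgroup of $\cH_\fn$ — one has $\cR_r(\epsilon^{\rm RS})_\fn={\rm Rec}_\fn(\epsilon_\fn)$ up to the same fixed sign, so that $\iota_\fn\bigl(\cR_r(\epsilon^{\rm RS})_\fn\bigr)$ recovers the right-hand side of Conjecture~\ref{mrs}.

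It therefore suffices to prove $\cD_{F,r}(\eta^{\rm RS})=\cR_r(\epsilon^{\rm RS})$ in ${\rm KS}_r(F/K,\chi)$. By Theorem~\ref{th main}(ii), $\cR_r$ is an isomorphism and ${\rm SS}_r(F/K,\chi)$ is free of rank one over $\cO[\Gamma]$; moreover, for any basis $\epsilon$ of ${\rm SS}_r(F/K,\chi)$ the functional $\epsilon_1$ has image ${\rm Fitt}^0_{\cO[\Gamma]}({\rm Cl}(E)_\chi)$ in $\cO[\Gamma]$ (by the last assertion of Theorem~\ref{th main}(ii)), and this ideal contains a non-zero-divisor of $\cO[\Gamma]$ since ${\rm Cl}(E)_\chi$ is finite; hence the evaluation map $\epsilon\mapsto\epsilon_1$ is injective on ${\rm SS}_r(F/K,\chi)$. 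Since the $\fn=1$ component of $\cR_r$ is the identity and $\cD_{F,r}$ acts as the identity at the trivial index $\fn=1$, one has
\[
\cR_r^{-1}\bigl(\cD_{F,r}(\eta^{\rm RS})\bigr)_1=\cD_{F,r}(\eta^{\rm RS})_1=\eta^{\rm RS}_F=\eta_{E/K,S}^{V,\chi}=\epsilon_1=\epsilon^{\rm RS}_1,
\]
whence $\cR_r^{-1}(\cD_{F,r}(\eta^{\rm RS}))=\epsilon^{\rm RS}$ by the injectivity just noted, i.e. $\cD_{F,r}(\eta^{\rm RS})=\cR_r(\epsilon^{\rm RS})$. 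Combining this with the two preceding paragraphs gives, for every $\fn\in\cN$, $\sum_{\sigma\in\cH_\fn}\sigma c_\fn\otimes\sigma^{-1}=\iota_\fn\bigl(\cD_{F,r}(\eta^{\rm RS})_\fn\bigr)=\iota_\fn\bigl(\cR_r(\epsilon^{\rm RS})_\fn\bigr)=\iota_\fn\bigl({\rm Rec}_\fn(\epsilon_\fn)\bigr)$, which is exactly Conjecture~\ref{mrs}.

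The delicate point, which I expect to be the main obstacle, is the first displayed identity of the first paragraph: that $\sum_{\sigma\in\cH_\fn}\sigma c_\fn\otimes\sigma^{-1}$ reduces modulo $I_\fn^{\nu(\fn)+1}$ into the image of the injection $\iota_\fn$ and there agrees, up to sign, with $\iota_\fn(\cD_{F,r}(\eta^{\rm RS})_\fn)$. This amounts to saying that the $\fn$-th Kolyvagin derivative class of $c_\fn=\eta^{\rm RS}_{F(\fn)}$ descends from $LF(\fn)$ to $LF$ and lands in the unramified module $\cS(\fn)_m$, and it is precisely here that the Euler system norm relations satisfied by $\eta^{\rm RS}$ at the primes dividing $\fn$ are used; this descent is exactly what the construction of $\cD_{F,r}$ in the Appendix (given first for $r=1$ and then extended to general $r$ by the functoriality of exterior power biduals, as in \cite[\S6.5]{bss}) is built to provide. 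The remaining ingredients — the valuation relation assembling the $(\epsilon_\fn)_\fn$ into a Stark system, the matching of the various local reciprocity maps, and the sign bookkeeping (fixed consistently as in \cite[\S\S3--4]{sbA} and \cite{sano}) — are routine but must be carried out with care.
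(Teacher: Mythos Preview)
Your overall strategy matches the paper's exactly: produce two elements of ${\rm KS}_r(F/K,\chi)$ from the two sides of Conjecture~\ref{mrs}, use the rank-one freeness and the injectivity of evaluation at $\fn=1$ (via ${\rm Fitt}^0_{\cO[\Gamma]}({\rm Cl}(E)_\chi)$ containing a non-zero-divisor) to conclude they coincide, and deduce the conjecture. The point where your sketch goes wrong is precisely the one you flag as delicate, but the difficulty is different from what you describe.

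The claimed identification $G_\fn\cong I_\fn^{\nu(\fn)}/I_\fn^{\nu(\fn)+1}$ is false: one only has a canonical \emph{injection} $G_\fn\hookrightarrow Q_\fn:=I_\fn^{\nu(\fn)}/I_\fn^{\nu(\fn)+1}$ (the paper's Lemma~\ref{lem1}(iii)), and $Q_\fn$ is genuinely larger once $\nu(\fn)\ge 2$. Consequently the map $\varphi_\fq^\fn$ (with target $I_\fn/I_\fn^2$) used to define ${\rm Rec}_\fn$ does \emph{not} coincide with the map $\varphi_\fq$ (with target $G_\fq$) used to build $\cR_r$, and one does not have $\cR_r(\epsilon^{\rm RS})_\fn={\rm Rec}_\fn(\epsilon_\fn)$. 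Likewise $\cD_{F,r}(\eta^{\rm RS})_\fn$ lives in $\cS(\fn)\otimes G_\fn$ and is not $\theta_\fn=\sum_\sigma\sigma c_\fn\otimes\sigma^{-1}$, which lives in the larger module tensored with $\ZZ[\cH_\fn]/I_\fn^{\nu(\fn)+1}$.

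The paper bridges this gap with two extra ingredients. First, a projector $s_\fn:Q_\fn\to G_\fn$ splitting the inclusion (Lemma~\ref{lem1}(iv)); one shows $s_\fn(\theta_\fn)$ and $s_\fn({\rm Rec}_\fn(\epsilon_\fn))$ land in the right place. Second, an explicit endomorphism $\Psi$ (a signed sum over permutations of the primes dividing $\fn$, involving Frobenius elements) such that $\cD_{F,r}(\eta^{\rm RS})=\Psi\bigl((s_\fn(\theta_\fn))_\fn\bigr)$ and $\cR_r(\epsilon^{\rm RS})=\Psi\bigl((s_\fn({\rm Rec}_\fn(\epsilon_\fn)))_\fn\bigr)$ (Propositions~\ref{unram ks} and \ref{prop reg}). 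Your injectivity-at-$\fn=1$ argument then gives equality of these Kolyvagin systems, but to get back from $s_\fn(\theta_\fn)=s_\fn({\rm Rec}_\fn(\epsilon_\fn))$ to the full equality $\theta_\fn=\iota_\fn({\rm Rec}_\fn(\epsilon_\fn))$ one needs both the injectivity of $\Psi$ (Lemma~\ref{lempsi}) and an induction on $\nu(\fn)$ using the recursion $\theta_\fn=s_\fn(\theta_\fn)-\sum_{\fd\mid\fn,\fd\neq\fn}(-1)^{\nu(\fn/\fd)}\theta_\fd\prod_{\fq\mid\fn/\fd}P_\fq^\fd$ (Lemma~\ref{lem0}(ii)) together with its analogue for ${\rm Rec}_\fn(\epsilon_\fn)$; this is the content of Theorem~\ref{thm2}. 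None of this is ``routine sign bookkeeping'': it is the mechanism that converts an equality in $G_\fn$ into an equality in the larger quotient $Q_\fn$, and without it your proof does not close.
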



After several preliminary, and quite technical, sections this result is proved in \S\ref{proof main}.

\subsection{Strategy of the proof}\label{idea} The strategy that we use to prove Theorem \ref{main} is a natural development of an approach used by Mazur and Rubin in \cite{MR} to prove (the `non-$2$-part' of) Darmon's Conjecture. In fact, if one specializes Theorem \ref{main} to the setting of Remark \ref{rem4}, then all assumptions in Theorem \ref{main} are unconditionally satisfied and the result simply recovers the main result of \cite{MR}.

For the convenience of the reader, in this subsection we shall sketch the proof of Theorem \ref{main}. To do this we first set some notations.

We write $T$ for the representation $\cO(1)\otimes \chi^{-1}$ discussed in Remark \ref{twisted rep}. Then, setting 
$$
\Sigma:=S\cup S_p(K) \text{ and } \Sigma_\fn:=S_\fn \cup S_p(K),
$$
there is a natural identification
$$
H^1(\cO_{F(\fn),\Sigma_\fn},T) \simeq (\cO_{LF(\fn),\Sigma_\fn}^\times)_\chi.
$$
There are therefore natural embeddings
$$
U_{F(\fn),S_\fn} \hookrightarrow H^1(\cO_{F(\fn),\Sigma_\fn},T) \text{ and }
U_{F, S_\fn} \hookrightarrow H^1(\cO_{F,\Sigma_\fn},T),
$$
via which we can regard
$$c_\fn \in {\bigcap}_{\cO[\cG_\fn]}^r H^1(\cO_{F(\fn),\Sigma_\fn},T) \text{ and }\epsilon_\fn \in {\bigcap}_{\cO[\Gamma]}^{r+\nu(\fn)} H^1(\cO_{F,\Sigma_\fn},T).$$

One sees that the induced map
$${\bigcap}_{\cO[\cG_\fn]}^r U_{F(\fn)} \otimes_\ZZ \ZZ[\cH_\fn]/I_\fn^{\nu(\fn)+1} \to {\bigcap}_{\cO[\cG_\fn]}^r H^1(\cO_{F(\fn),\Sigma_\fn},T) \otimes_\ZZ \ZZ[\cH_\fn]/I_\fn^{\nu(\fn)+1}$$
is injective, so Conjecture \ref{mrs} is equivalent to the assertion that the equality holds in the latter group. (We use Galois cohomology only for notational convenience.)

In the following, we set
$$\theta_\fn:=\sum_{\sigma \in \cH_\fn} \sigma c_\fn \otimes \sigma^{-1 } \in {\bigcap}_{\cO[\cG_\fn]}^r H^1(\cO_{F(\fn),\Sigma_\fn},T)\otimes_\ZZ \ZZ[\cH_\fn]/I_\fn^{\nu(\fn)+1}.$$


We can now give a brief idea of the proof of Theorem \ref{main}. First, we observe that the systems
$$(\theta_\fn)_{\fn \in \cN} \text{ and }({\rm Rec}_\fn(\epsilon_\fn))_{\fn \in \cN}$$
are essentially Kolyvagin systems (see Propositions \ref{unram ks} and \ref{prop reg}).
By Theorem \ref{th main}(ii), we know that the module of Kolyvagin systems is free of rank one (over $\cO[\Gamma]$ in our case), there exists a basis $\kappa$ of the module and we can write
$$(\theta_\fn)_\fn = a \cdot \kappa \text{ and }({\rm Rec}_\fn(\epsilon_\fn))_\fn =b \cdot \kappa$$
with some $a,b \in \cO[\Gamma]$. So it reduces to show that $a=b$. However, since the conjecture is trivially true when $\fn=1$ (see Remark \ref{rem1}), we have
$$(a-b)\cdot \kappa_1=0.$$
Then it is easy to show that ${\rm Ann}_{\cO[\Gamma]}(\kappa_1)=0$, which implies $a=b$ and hence completes the proof of Theorem \ref{main}.

\subsection{Kolyvagin derivatives}\label{koly der}


In this subsection, we relate the element
$$  \theta_\fn=\sum_{\sigma \in \cH_\fn}\sigma c_\fn \otimes \sigma^{-1}$$
with the `Kolyvagin derivative'.
 
The construction given here is valid for a general Euler system $c$ in ${\rm ES}_r(\cK/K,S,\chi)$ (by replacing $c_\fn$ by $c_{F(\fn)}$), where we take $\cK$ to be sufficiently large so that it contains $F$, the maximal $p$-extension inside the ray class field modulo $\fq$ for all but finitely many $\fq$, and the maximal abelian pro-$p$ extension of $K$ unramified outside $p$. In particular, in this way one obtains an explicit construction of the derivative homomorphism $\cD_{F,r}$ in Theorem \ref{th main}(i).

We give some preliminaries. As in \S \ref{sec koly}, for $\fq \in \cP$, set
$$G_\fq:=\Gal(K(\fq)/K(1)),$$
and for $\fn \in \cN$, set
$$G_\fn :=\bigotimes_{\fq \mid \fn} G_\fq.$$
Note that
$$\cH_\fn(:=\Gal(F(\fn)/F)) \simeq \Gal(K(\fn)/K(1))\simeq \prod_{\fq \mid \fn} G_\fq.$$
From this, if $\fd$ is a divisor of $\fn$, one can regard $\cH_\fd$ as both a subgroup and a quotient of $\cH_\fn$. Let
$$\pi_\fd: \cH_\fn \to \cH_\fd$$
be the natural projection map. This map induces a map
$$\Z[\cH_\fn] \to \ZZ[\cH_\fd]\subset \ZZ[\cH_\fn],$$
which is also denoted by $\pi_\fd$. We define
$$s_\fn:\ZZ[\cH_\fn] \to \ZZ[\cH_\fn] ; \ a \mapsto \sum_{\fd \mid \fn} (-1)^{\nu(\fn/\fd)}\pi_\fd(a),$$
where $\fd$ runs over all divisors of $\fn$ (including $1$). This map induces endomorphisms of
$$I_\fn^a/I_\fn^{a+1}, \ X\otimes_\ZZ \ZZ[\cH_\fn] \text{ (for any module $X$)}, \ \text{etc.,}$$
which we denote also by $s_\fn$.

We fix a generator $\sigma_\fq$ of $G_\fq$ for each $\fq \in \cP$. Let
$$D_\fq:=\sum_{i=1}^{\# G_\fq -1}i \sigma_\fq^{i} \in \ZZ[G_\fq], \ D_\fn:=\prod_{\fq \mid \fn}D_\fq \in \ZZ[\cH_\fn]$$
be Kolyvagin's derivative operators.

We have the following algebraic lemma.

\begin{lemma}\label{lem1} For each ideal $\fn$ in $\cN$ set $Q_\fn := I_\fn^{\nu(\fn)}/I_\fn^{\nu(\fn)+1}$.
\begin{itemize}
\item[(i)] The image of the map $s_\fn :\ZZ[\cH_\fn]\to \ZZ[\cH_\fn]$ is contained in $I_\fn^{\nu(\fn)}$.
\item[(ii)] The image of the map $\ZZ[\cH_\fn]/I_\fn^{\nu(\fn)+1} \to Q_\fn$ that is induced by $s_\fn$ is contained in $\left\langle \prod_{\fq \mid \fn}(\sigma_\fq-1)\right\rangle_\ZZ$.
\item[(iii)] There is a natural isomorphism
$$G_\fn \xrightarrow{\sim} \left\langle \prod_{\fq \mid \fn}(\sigma_\fq-1)\right\rangle_\ZZ \subset Q_\fn; \ \bigotimes_{\fq \mid \fn}\sigma_\fq \mapsto \prod_{\fq \mid \fn}(\sigma_\fq-1).$$
\item[(iv)] The map
$$ Q_\fn \to \left\langle \prod_{\fq \mid \fn}(\sigma_\fq-1)\right\rangle_\ZZ \simeq G_\fn$$
induced by $s_\fn$ gives a splitting of the injection $G_\fn \hookrightarrow Q_\fn$ in (iii).
\item[(v)] Let $X$ be a $\ZZ[\cH_\fn]$-module and $x \in X$. Denote the map
$$X \otimes_\ZZ \ZZ[\cH_\fn]/I_\fn^{\nu(\fn)+1} \to X \otimes_\ZZ  \left\langle \prod_{\fq \mid \fn}(\sigma_\fq-1)\right\rangle_\ZZ \simeq X \otimes_\ZZ G_\fn$$
induced by $s_\fn$ also by the same notation. Then we have
$$s_\fn\left(\sum_{\sigma \in \cH_\fn}\sigma x \otimes \sigma^{-1}\right)=(-1)^{\nu(\fn)}D_\fn x \otimes \bigotimes_{\fq \mid \fn} \sigma_\fq.$$
\end{itemize}

\end{lemma}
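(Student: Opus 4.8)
The plan is to deduce every part of the lemma from the tensor factorisation $\ZZ[\cH_\fn]\simeq\bigotimes_{\fq\mid\fn}\ZZ[G_\fq]$ induced by $\cH_\fn\simeq\prod_{\fq\mid\fn}G_\fq$, under which $I_\fn$ becomes the augmentation ideal of the tensor product. Writing $\varepsilon_\fq\colon\ZZ[G_\fq]\to\ZZ[G_\fq]$ for the composite of the augmentation with the unit inclusion $\ZZ\hookrightarrow\ZZ[G_\fq]$, each projection $\pi_\fd$ is identified with $\bigotimes_{\fq\mid\fd}\id\otimes\bigotimes_{\fq\mid\fn/\fd}\varepsilon_\fq$, so that expanding the product $\bigotimes_{\fq\mid\fn}(\id-\varepsilon_\fq)$ recovers exactly the alternating sum defining $s_\fn$; thus $s_\fn=\bigotimes_{\fq\mid\fn}(\id-\varepsilon_\fq)$. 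Since $\id-\varepsilon_\fq$ is the idempotent projection of $\ZZ[G_\fq]=\ZZ\oplus I_\fq$ onto its augmentation ideal $I_\fq:=\ker(\ZZ[G_\fq]\to\ZZ)$, the map $s_\fn$ is the idempotent projection of $\ZZ[\cH_\fn]=\bigotimes_{\fq\mid\fn}(\ZZ\oplus I_\fq)$ onto the direct summand $\bigotimes_{\fq\mid\fn}I_\fq$, which is the product ideal $\prod_{\fq\mid\fn}I_\fq\subseteq I_\fn^{\nu(\fn)}$. This is (i).

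Using the standard identity $I_\fn^{k}=\sum_{(k_\fq)_\fq,\ \sum_\fq k_\fq=k}\bigotimes_{\fq\mid\fn}I_\fq^{k_\fq}$ (with $I_\fq^{0}:=\ZZ[G_\fq]$) together with $s_\fn\bigl(\bigotimes_{\fq\mid\fn}I_\fq^{k_\fq}\bigr)=\bigotimes_{\fq\mid\fn}I_\fq^{\max(k_\fq,1)}$, one checks $s_\fn(I_\fn^{\nu(\fn)+1})\subseteq I_\fn^{\nu(\fn)+1}$, so $s_\fn$ descends to a map $\ZZ[\cH_\fn]/I_\fn^{\nu(\fn)+1}\to Q_\fn$ whose image is the image of $\bigotimes_{\fq\mid\fn}I_\fq$ in $Q_\fn$. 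Because $\sigma_\fq^{e}-1\equiv e(\sigma_\fq-1)\pmod{I_\fq^{2}}$ and inserting a factor from $I_\fq^{2}$ into one tensor slot produces an element of $I_\fn^{\nu(\fn)+1}$, every generator $\bigotimes_{\fq\mid\fn}(\sigma_\fq^{e_\fq}-1)$ of $\bigotimes_{\fq\mid\fn}I_\fq$ reduces in $Q_\fn$ to an integral multiple of $z:=\prod_{\fq\mid\fn}(\sigma_\fq-1)$; this gives (ii), in fact with image equal to $\langle z\rangle_\ZZ$. The same congruence yields a surjection $G_\fn=\bigotimes_{\fq\mid\fn}G_\fq\twoheadrightarrow\langle z\rangle_\ZZ$, $\bigotimes_{\fq\mid\fn}\sigma_\fq\mapsto z$, well defined because the relation $(\sigma_\fq-1)\sum_{i}\sigma_\fq^{i}=0$ forces $\#G_\fq\cdot(\sigma_\fq-1)\in I_\fq^{2}$ and hence $\#G_\fq\cdot z\in I_\fn^{\nu(\fn)+1}$. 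To obtain (iii) it remains to show the class of $z$ in $Q_\fn$ has order exactly $\#G_\fn$: the lower bound comes from the ring homomorphism $\ZZ[\cH_\fn]\to(\ZZ/\#G_\fn)[t_\fq:\fq\mid\fn]/(t_\fq^{2}:\fq\mid\fn)$, $\sigma_\fq\mapsto 1+t_\fq$ — well defined since $\#G_\fn$ divides each $\#G_\fq$, so $(1+t_\fq)^{\#G_\fq}=1+\#G_\fq t_\fq=1$ — which carries $I_\fn^{j}$ into $\fm^{j}$ for $\fm=(t_\fq:\fq\mid\fn)$ and hence induces a map $Q_\fn\to\fm^{\nu(\fn)}=(\ZZ/\#G_\fn)\cdot\prod_{\fq\mid\fn}t_\fq\simeq\ZZ/\#G_\fn$ sending the class of $z$ to the generator $\prod_{\fq\mid\fn}t_\fq$.

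Claim (iv) is then immediate: since $s_\fq(\sigma_\fq-1)=\sigma_\fq-1$, the product formula gives $s_\fn(z)=z$, so the composite $G_\fn\hookrightarrow Q_\fn\xrightarrow{s_\fn}\langle z\rangle_\ZZ\simeq G_\fn$ fixes $\bigotimes_{\fq\mid\fn}\sigma_\fq$ and is therefore the identity. For (v) one computes directly in the tensor factorisation: writing $\sigma=\prod_{\fq\mid\fn}\sigma_\fq^{i_\fq}\in\cH_\fn$ with $0\le i_\fq<\#G_\fq$, the product formula gives $s_\fn(\sigma^{-1})=\bigotimes_{\fq\mid\fn}(\sigma_\fq^{-i_\fq}-1)$, which modulo $I_\fn^{\nu(\fn)+1}$ is congruent to $(-1)^{\nu(\fn)}\bigl(\prod_{\fq\mid\fn}i_\fq\bigr)z$ by the congruence $\sigma_\fq^{-i}-1\equiv -i(\sigma_\fq-1)\pmod{I_\fq^{2}}$; applying $\id_X\otimes s_\fn$ to $\sum_{\sigma\in\cH_\fn}\sigma x\otimes\sigma^{-1}$, summing, and using $\prod_{\fq\mid\fn}\bigl(\sum_{i}i\,\sigma_\fq^{i}\bigr)=\prod_{\fq\mid\fn}D_\fq=D_\fn$ yields $(-1)^{\nu(\fn)}D_\fn x\otimes z$, which equals $(-1)^{\nu(\fn)}D_\fn x\otimes\bigotimes_{\fq\mid\fn}\sigma_\fq$ under the identification of (iii).

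The step I expect to be the main obstacle is the lower bound in (iii) — equivalently the injectivity of the natural map $G_\fn\to Q_\fn$ — which is precisely what the auxiliary square-zero ring is designed to detect. All the remaining assertions are formal manipulations with the augmentation filtration once the product formula $s_\fn=\bigotimes_{\fq\mid\fn}(\id-\varepsilon_\fq)$ and the filtration identity $I_\fn^{k}=\sum_{\sum_\fq k_\fq=k}\bigotimes_{\fq\mid\fn}I_\fq^{k_\fq}$ are in place.
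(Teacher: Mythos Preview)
Your argument is correct and self-contained. The paper does not actually prove this lemma but simply cites \cite[Lem.~4.27 and 4.28]{sbA}, so there is no in-paper proof to compare against; your tensor-factorisation approach, with the identification $s_\fn=\bigotimes_{\fq\mid\fn}(\id-\varepsilon_\fq)$ and the auxiliary square-zero ring to detect the order of $z$ in $Q_\fn$, is the standard way to establish these facts and almost certainly coincides with what is done in the cited reference.
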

\begin{proof}
See \cite[Lem. 4.27 and 4.28]{sbA}.
\end{proof}

For $\fq \in \cP$, we set $P_\fq:=1-{\rm Fr}_\fq^{-1}.$ Here ${\rm Fr}_\fq$ denotes the (arithmetic) Frobenius element at $\fq$, which is regarded as an element of $\cH_\fn$ for any $\fn \in \cN$, via the injection $\cH_{\fn/\fq} \hookrightarrow \cH_\fn$ when $\fq \mid \fn$. Thus $P_\fq$ is regarded as an element of $I_\fn/I_\fn^2$ for any $\fn \in \cN$. We denote by
${\rm Fr}_\fq^\fn$ the Frobenius ${\rm Fr}_\fq$ regarded as an element of $\cH_\fn$. Similarly, we denote by $P_\fq^\fn$ the element $P_\fq$ regarded as an element of $I_\fn/I_\fn^2$.

\begin{lemma}\label{lem0} For each ideal $\fd$ in $\cN$ set $Q_\fd := I_\fd^{\nu(\fd)}/I_\fd^{\nu(\fd)+1}$.
\begin{itemize}
\item[(i)] For $\fd\mid \fn$, there is a natural injection
\begin{equation}\label{inj0} {\bigcap}_{\cO[\cG_\fd]}^r H^1(\cO_{F(\fd),\Sigma_\fd},T) \otimes_\ZZ Q_\fd \hookrightarrow {\bigcap}_{\cO[\cG_\fn]}^r H^1(\cO_{F(\fn),\Sigma_\fn},T) \otimes_\ZZ Q_\fd.\end{equation}
\item[(ii)] We have
$$  \theta_\fn=\sum_{\sigma \in \cH_\fn}\sigma c_\fn \otimes \sigma^{-1}  \in {\bigcap}_{\cO[\cG_\fn]}^r H^1(\cO_{F(\fn),\Sigma_\fn},T) \otimes_\ZZ Q_\fn$$
and
$$  \theta_\fn=s_\fn(  \theta_\fn)-\sum_{\fd \mid \fn, \ \fd \neq \fn}(-1)^{\nu(\fn/\fd)}  \theta_\fd \prod_{\fq \mid \fn/\fd}P_\fq^\fd ,$$
where we use (\ref{inj0}) to regard $\theta_\fd$ as an element of ${\bigcap}_{\cO[\cG_\fn]}^r H^1(\cO_{F(\fn),\Sigma_\fn},T) \otimes_\ZZ Q_\fd$.
\end{itemize}
\end{lemma}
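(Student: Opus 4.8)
The plan is to establish (i) first, purely from the structure of the cohomology groups and the Galois-descent properties of exterior-power biduals, and then to derive (ii) as a combinatorial identity in $\bigcap_{\cO[\cG_\fn]}^r H^1(\cO_{F(\fn),\Sigma_\fn},T)\otimes_\ZZ\ZZ[\cH_\fn]/I_\fn^{\nu(\fn)+1}$ by unravelling the definition of $s_\fn$ and projecting onto $Q_\fn$.

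For claim (i), the key observation is that $F(\fn)/F(\fd)$ is unramified outside the primes dividing $\fn/\fd$, each of which lies in $\cP$ and hence splits completely in $LF H_K$; combined with Hypothesis~\ref{hyp chi}(i) (which forces $U_{F'}$ to be $\cO$-free for every finite abelian $p$-extension $F'$ of $K$, by Remark~\ref{rem hyp}) this means the relevant $H^1$ groups behave well under the norm/corestriction maps. Concretely, I would use the inflation–restriction sequence together with the vanishing of $H^0(K_\fq,\overline T^\vee(1))$ for $\fq\mid\fn/\fd$ (as in the proof of Lemma~\ref{lem:ur}, since such $\fq$ satisfy $\chi(\fq)\neq 1$ because they split completely in $L$, so $\chi$ is trivial there — wait, rather: these primes are in $\cP$, so $\chi(\fq)=1$; the relevant point is instead that the primes ramifying in $F(\fn)/F(\fd)$ are exactly those dividing $\fn/\fd$ and at each of these $H^1_{\cF_{\rm ur}}$ behaves correctly) to show that the natural restriction map $H^1(\cO_{F(\fd),\Sigma_\fd},T)\to H^1(\cO_{F(\fn),\Sigma_\fn},T)$ is injective with torsion-free cokernel after passing to $\cG_\fd$-fixed parts; then the functoriality of $\bigcap^r$ with respect to ring extensions $\cO[\cG_\fd]\hookrightarrow\cO[\cG_\fn]$ and to injections with torsion-free cokernel (see \cite[App.~A]{sbA}) yields the injection \eqref{inj0}. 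This is essentially the same mechanism that produces the injection $\iota_\fn$ in \eqref{iota}, so I expect to be able to cite or lightly adapt that argument.

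For claim (ii), the first assertion — that $\theta_\fn$ actually lands in $\bigcap_{\cO[\cG_\fn]}^r H^1(\cO_{F(\fn),\Sigma_\fn},T)\otimes_\ZZ Q_\fn$ rather than merely in the quotient by $I_\fn^{\nu(\fn)+1}$ — follows because $c_\fn$ is a norm-compatible Rubin--Stark element: the Euler-system norm relations force $\N_{F(\fn)/F(\fd)}^r(c_\fn)$ to be divisible by $\prod_{\fq\mid\fn/\fd}P_\fq$, hence $c_\fn$ dies modulo $I_\fn$ after applying any proper projection $\pi_\fd$, which is exactly what is needed to see that $\sum_{\sigma\in\cH_\fn}\sigma c_\fn\otimes\sigma^{-1}$ has its $\ZZ[\cH_\fn]/I_\fn^{\nu(\fn)+1}$-component concentrated in degree $\nu(\fn)$ of the $I_\fn$-adic filtration. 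The second, recursive, identity is then obtained by writing out $s_\fn=\sum_{\fd\mid\fn}(-1)^{\nu(\fn/\fd)}\pi_\fd$ applied to $\sum_\sigma\sigma c_\fn\otimes\sigma^{-1}$: the $\fd=\fn$ term gives $s_\fn(\theta_\fn)$, and for each proper divisor $\fd$ the term $(-1)^{\nu(\fn/\fd)}\pi_\fd(\sum_\sigma\sigma c_\fn\otimes\sigma^{-1})$ is rewritten, using the norm relation $\N_{F(\fn)/F(\fd)}^r(c_\fn)=\prod_{\fq\mid\fn/\fd}P_\fq\cdot c_\fd$ together with the identification $\cH_\fn/\cH_{\fd}$-orbit sums, as $(-1)^{\nu(\fn/\fd)}\theta_\fd\prod_{\fq\mid\fn/\fd}P_\fq^\fd$. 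Rearranging gives the displayed formula. I would invoke the bookkeeping already done in Lemma~\ref{lem1} (especially parts (i), (ii) and (v)) and in \cite[Lem.~4.27, 4.28]{sbA} to keep this purely formal.

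The main obstacle I anticipate is not any single deep step but rather the careful tracking of the two independent filtrations at play — the $I_\fn$-adic filtration on $\ZZ[\cH_\fn]$ and the "number of prime divisors" grading — and in particular verifying that every term in the expansion of $s_\fn(\sum_\sigma\sigma c_\fn\otimes\sigma^{-1})$ that a priori lies only in $\ZZ[\cH_\fn]/I_\fn^{\nu(\fn)+1}$ actually has trivial image in the lower graded pieces, so that the final identity genuinely lives in $Q_\fn$. This is where the norm-compatibility of the Rubin--Stark system must be used most delicately, and where one must be sure that the injection \eqref{inj0} is compatible with the maps induced by $s_\fn$, $\pi_\fd$ and multiplication by the $P_\fq^\fd$; once that compatibility is pinned down the rest is formal manipulation.
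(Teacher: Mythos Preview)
Your approach is essentially the paper's: for (i) you correctly recognise that the injection is built exactly as $\iota_\fn$ in \eqref{iota}, and for (ii) you expand $s_\fn(\theta_\fn)=\sum_{\fd\mid\fn}(-1)^{\nu(\fn/\fd)}\pi_\fd(\theta_\fn)$ and compute each $\pi_\fd(\theta_\fn)$ via the norm relation $\N_{\fn/\fd}c_\fn=\bigl(\prod_{\fq\mid\fn/\fd}P_\fq^\fd\bigr)c_\fd$, which gives $\pi_\fd(\theta_\fn)=\theta_\fd\prod_{\fq\mid\fn/\fd}P_\fq^\fd$ and hence the recursive formula upon rearranging. Two small corrections are in order. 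First, a slip of the pen: when you expand $s_\fn(\theta_\fn)$, the $\fd=\fn$ term is $\pi_\fn(\theta_\fn)=\theta_\fn$, not $s_\fn(\theta_\fn)$; this is precisely what lets you solve for $\theta_\fn$. Second, your proposed \emph{direct} argument that $\theta_\fn$ lies in $\bigcap\otimes Q_\fn$ (``$c_\fn$ dies modulo $I_\fn$ after applying any proper projection $\pi_\fd$'') is vague as stated and is not how the paper proceeds; instead, the paper first establishes the recursive identity as an equality in $\bigcap\otimes\ZZ[\cH_\fn]/I_\fn^{\nu(\fn)+1}$ and then \emph{deduces} $\theta_\fn\in\bigcap\otimes Q_\fn$ by induction on $\nu(\fn)$, using that $s_\fn(\theta_\fn)$ lands in $I_\fn^{\nu(\fn)}$ by Lemma~\ref{lem1}(i) and that each term $\theta_\fd\prod_{\fq\mid\fn/\fd}P_\fq^\fd$ lies in the image of $Q_\fn$ by the inductive hypothesis. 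This inductive route sidesteps exactly the filtration-tracking obstacle you flag at the end, so you should reorganise part (ii) accordingly.
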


\begin{proof} The construction of the injection in claim (i) is the same as that of $\iota_\fn$ in (\ref{iota}).

To prove claim (ii) we note that, by the definition of $s_\fn$, one has
$$s_\fn(\theta_\fn)=\sum_{\fd \mid \fn}(-1)^{\nu(\fn/\fd)}\pi_\fd(\theta_\fn).$$
Setting ${\N}_{\fn/\fd}:=\sum_{\sigma \in \cH_{\fn/\fd}}\sigma$, we compute
\begin{eqnarray*}
\pi_\fd(\theta_\fn)&=&\pi_\fd\left( \sum_{\sigma \in \cH_\fn}\sigma c_\fn \otimes \sigma^{-1}\right) \\
&=&\sum_{\sigma \in \cH_\fn}\sigma c_\fn \otimes \pi_\fd(\sigma)^{-1} \\
&=& \sum_{\sigma \in \cH_\fd} \sigma {\N_{\fn/\fd}}c_\fn \otimes \sigma^{-1} \\
&=& \sum_{\sigma \in \cH_\fd}\sigma\left( \prod_{\fq \mid \fn/\fd}P_\fq^\fd\right) c_\fd \otimes \sigma^{-1} \\
&=&\sum_{\sigma \in \cH_\fd}\sigma c_\fd \otimes \sigma^{-1} \prod_{\fq \mid\fn/\fd}P_\fq^\fd \\
&=&\theta_\fd \prod_{\fq \mid \fn/\fd}P_\fq^\fd,
\end{eqnarray*}
where the fourth equality follows from the well-known `norm relation'
$${\N}_{\fn/\fd}c_\fn =\left(\prod_{\fq \mid \fn/\fd}P_\fq^\fd \right) c_\fd \text{ in }{\bigcap}_{\cO[\cG_\fn]}^r H^1(\cO_{F(\fn),\Sigma_\fn},T).$$
(See \cite[Prop. 3.6 and Lem. 4.4]{sanotjm}, for example.)
Thus we have
$$s_\fn(\theta_\fn)=\sum_{\fd \mid \fn}(-1)^{\nu(\fn/\fd)}\pi_\fd(\theta_\fn)=\sum_{\fd \mid \fn}(-1)^{\nu(\fn/\fd)} \theta_\fd \prod_{\fq \mid \fn/\fd}P_\fq^\fd.$$
The assertion follows from this, by using Lemma \ref{lem1}(i) and induction on $\nu(\fn)$.
\end{proof}

We set $M_\fn:=\# G_\fn,$ with $M_1$ understood to be $0$. Since we fixed a generator $\sigma_\fq \in G_\fq$ for each $\fq \in \cP$, we have an identification
$$G_\fn \xrightarrow{\sim} \ZZ/M_\fn \ZZ ; \ \bigotimes_{\fq \mid \fn}\sigma_\fq \mapsto 1.$$
We set $\cO_\fn:=\cO/M_\fn \cO$ and $A_\fn:=T/M_\fn T.$

\begin{lemma}\label{lem2}\
\begin{itemize}
\item[(i)] There are natural injections
\begin{eqnarray}\label{inj1}
{\bigcap}_{\cO[\cG_\fn]}^r H^1(\cO_{F(\fn),\Sigma_\fn},T) \otimes_\ZZ G_\fn \hookrightarrow {\bigcap}_{\cO_\fn[\cG_\fn]}^r H^1(\cO_{F(\fn),\Sigma_\fn},A_\fn)\otimes_\ZZ G_\fn
\end{eqnarray}
and
\begin{eqnarray}\label{inj2}
{\bigcap}_{\cO_\fn[\Gamma]}^r H^1(\cO_{F,\Sigma_\fn},A_\fn)\otimes_\ZZ G_\fn \hookrightarrow {\bigcap}_{\cO_\fn[\cG_\fn]}^r H^1(\cO_{F(\fn),\Sigma_\fn},A_\fn)\otimes_\ZZ G_\fn.
\end{eqnarray}
\item[(ii)] The image of $  \theta_\fn=\sum_{\sigma \in \cH_\fn}\sigma c_\fn \otimes \sigma^{-1}$ under the map
\begin{eqnarray*}
{\bigcap}_{\cO[\cG_\fn]}^r H^1(\cO_{F(\fn),\Sigma_\fn},T) \otimes_\ZZ Q_\fn&\xrightarrow{s_\fn} &{\bigcap}_{\cO[\cG_\fn]}^r H^1(\cO_{F(\fn),\Sigma_\fn},T) \otimes_\ZZ G_\fn \\
&\stackrel{(\ref{inj1})}{\hookrightarrow}& {\bigcap}_{\cO_\fn[\cG_\fn]}^r H^1(\cO_{F(\fn),\Sigma_\fn},A_\fn) \otimes_\ZZ G_\fn
\end{eqnarray*}
lies in ${\bigcap}_{\cO_\fn[\Gamma]}^r H^1(\cO_{F,\Sigma_\fn},A_\fn) \otimes_\ZZ G_\fn$ (i.e., the image of the injection (\ref{inj2})).
\end{itemize}

\end{lemma}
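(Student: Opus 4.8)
The plan is to prove claim (i) and claim (ii) in turn, with claim (ii) being the substantive point. For claim (i) I would argue that both injections are instances of the same general principle used for $\iota_\fn$ in (\ref{iota}): tensoring with the finite cyclic group $G_\fn$ over $\ZZ$, the short exact sequence $0 \to T \xrightarrow{M_\fn} T \to A_\fn \to 0$ induces $H^1(\cO_{F(\fn),\Sigma_\fn},T)/M_\fn \hookrightarrow H^1(\cO_{F(\fn),\Sigma_\fn},A_\fn)$ (using that the relevant $H^0$ is torsion-free, which follows from Hypothesis~\ref{hyp chi}(i) as in Remark~\ref{rem hyp}), and then applying the exact-power-bidual functor $\bigcap^r$ together with the fact that its formation is compatible with such base change when the modules in play are free over the relevant Gorenstein orders (again guaranteed under Hypothesis~\ref{hyp chi}). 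For (\ref{inj2}) the relevant input is that the corestriction map $H^1(\cO_{F(\fn),\Sigma_\fn},A_\fn) \to H^1(\cO_{F,\Sigma_\fn},A_\fn)$ has a section, or rather that the dual restriction map on $\Gamma$-invariants realizes $\bigcap^r_{\cO_\fn[\Gamma]} H^1(\cO_{F,\Sigma_\fn},A_\fn)$ as a submodule of $\bigcap^r_{\cO_\fn[\cG_\fn]} H^1(\cO_{F(\fn),\Sigma_\fn},A_\fn)$; this is exactly the map (\ref{incl}) reduced modulo $M_\fn$, whose cokernel is torsion-free, so injectivity passes to the reduction. I would cite \cite[App. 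A]{sbA} and the argument of \cite[Lem. 2.11]{sano} for these functoriality statements.

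For claim (ii), the key observation is the explicit formula in Lemma~\ref{lem1}(v): applied with $X = H^1(\cO_{F(\fn),\Sigma_\fn},T)$ (or rather its $r$-th bidual tensored appropriately), one has
$$
s_\fn\!\left(\sum_{\sigma \in \cH_\fn}\sigma c_\fn \otimes \sigma^{-1}\right) = (-1)^{\nu(\fn)} D_\fn c_\fn \otimes \bigotimes_{\fq \mid \fn}\sigma_\fq,
$$
so that $s_\fn(\theta_\fn)$ is, up to sign and the fixed generator of $G_\fn$, the image of $c_\fn$ under the classical Kolyvagin derivative operator $D_\fn$ reduced modulo $M_\fn$. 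The claim that this lies in the $\Gamma$-level submodule (\ref{inj2}) is then precisely the statement that $D_\fn c_\fn$ is fixed (modulo $M_\fn$) by $\cH_\fn$, i.e.\ descends to $F$ rather than merely to $F(\fn)$. This is the standard `congruence' at the heart of every Euler system argument: one verifies $(\sigma_\fq - 1) D_\fq c_\fn \equiv - N_{\fq} c_\fn \pmod{M_\fn}$ for each $\fq \mid \fn$, where $N_\fq = \sum_{i} \sigma_\fq^i$, using the telescoping identity $(\sigma_\fq - 1)D_\fq = \#G_\fq - N_\fq$ in $\ZZ[G_\fq]$; then the Euler system norm relation $N_\fq c_\fn = P_\fq^{\fn/\fq} c_{\fn/\fq}$ together with the fact that $\fq$ splits completely in $LF$ (so $P_\fq^{\fn/\fq}$, being $1 - {\rm Fr}_\fq^{-1}$, lands in $M_\fn T$ after the relevant reduction, as $M_\fn \mid M_{\fn/\fq}\cdot(\N\fq - 1)$ by the choice of $\cP$) forces $(\sigma_\fq-1)D_\fq c_\fn \in M_\fn\cdot(\text{module})$. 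Iterating over all $\fq \mid \fn$ gives the $\cH_\fn$-invariance of $D_\fn c_\fn$ modulo $M_\fn$.

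The main obstacle I anticipate is bookkeeping the compatibility of the several bidual and tensor constructions under reduction modulo $M_\fn$ — specifically, making precise that the map labelled $s_\fn$ in the displayed composite of claim (ii) is well-defined on the bidual $\bigcap^r$ (not just on the exterior power $\bigwedge^r$) and commutes with the base change $T \rightsquigarrow A_\fn$, so that the elementwise computation via Lemma~\ref{lem1}(v) is legitimate at the level of biduals. This is precisely the kind of `auxiliary argument' the introduction flags as deferred to the appendix; I would isolate it as a functoriality lemma about $\bigcap^r_{R}(-)$ under a surjection of Gorenstein orders $R \to R/M$ with free modules, proved by dualizing twice and using right-exactness of $\bigwedge^r$, and then the rest of claim (ii) is the formal Euler-system computation sketched above.
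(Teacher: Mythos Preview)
Your overall architecture matches the paper's: for (i), both injections are built by dualizing surjections of $\Hom$-modules (the paper spells out (\ref{inj1}) explicitly and refers to \cite[\S 4.3.1]{sbA} for (\ref{inj2})), and for (ii), one uses Lemma~\ref{lem1}(v) to reduce to showing that the image of $D_\fn c_\fn$ in ${\bigcap}_{\cO_\fn[\cG_\fn]}^r H^1(\cO_{F(\fn),\Sigma_\fn},A_\fn)$ is $\cH_\fn$-fixed. The paper simply cites \cite[Lem.~6.9]{bss} for this last fact, whereas you attempt to reprove it.

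Your reproof, however, has a genuine gap. You argue that $N_{G_\fq} c_\fn = (1 - {\rm Fr}_\fq^{-1}) c_{\fn/\fq}$ vanishes modulo $M_\fn$ because $\fq$ splits completely in $LF$. But the splitting hypothesis only forces ${\rm Fr}_\fq = 1$ in $\Gamma$; the Frobenius ${\rm Fr}_\fq$ is typically \emph{non-trivial} in $\cH_{\fn/\fq}$ when $\nu(\fn) > 1$ (indeed, the non-triviality of $P_\fq^\fd$ in $I_\fd/I_\fd^2$ is precisely what makes the Kolyvagin relations non-vacuous), and $(1 - {\rm Fr}_\fq^{-1}) c_{\fn/\fq}$ has no reason to be divisible by $M_\fn$. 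The correct argument proceeds by induction on $\nu(\fn)$: from the telescoping identity one obtains
\[
(\sigma_\fq - 1) D_\fn c_\fn \;=\; \#G_\fq \cdot D_{\fn/\fq} c_\fn \;-\; (1 - {\rm Fr}_\fq^{-1}) D_{\fn/\fq} c_{\fn/\fq},
\]
and the second term lies in $M_{\fn/\fq} \cdot U_{F(\fn/\fq)} \subseteq M_\fn \cdot U_{F(\fn)}$ by the inductive hypothesis applied with $\sigma = {\rm Fr}_\fq^{-1} \in \cH_{\fn/\fq}$. (Note also that you should be computing $(\sigma_\fq - 1)D_\fn c_\fn$, not $(\sigma_\fq - 1)D_\fq c_\fn$ as written.) A minor point on (i): the module $H^1(\cO_{F(\fn),\Sigma_\fn}, A_\fn)$ is not known to be free over $\cO_\fn[\cG_\fn]$, so your appeal to ``compatibility of $\bigcap^r$ with base change for free modules over Gorenstein orders'' is not quite the right justification; the paper instead uses only that $H^1(\cO_{F(\fn),\Sigma_\fn},T)$ is $\cO$-free and dualizes the resulting surjection on exterior powers of $\Hom$-modules directly.
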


\begin{proof}
(i) The construction of (\ref{inj2}) is the same as that of $\iota_\fn$ in (\ref{iota}). (Actually, we have
$$\left( {\bigcap}_{\cO_\fn[\cG_\fn]}^r H^1(\cO_{F(\fn),\Sigma_\fn},A_\fn)\right)^{\cH_\fn}={\bigcap}_{\cO_\fn[\Gamma]}^r H^1(\cO_{F,\Sigma_\fn},A_\fn).$$
See \cite[\S 4.3.1]{sbA}.) We give a construction of (\ref{inj1}). For simplicity, set
$$R:=\cO[\cG_\fn], \ M:=M_\fn, \ A:=A_\fn=T/MT, \ H(-):=H^1(\cO_{F(\fn),\Sigma_\fn},-).$$
By identifying $G_\fn=\ZZ/M\ZZ$, it is sufficient to construct an injection
$$\left( {\bigcap}_R^r H(T) \right) \otimes_\ZZ \ZZ/M\ZZ \hookrightarrow {\bigcap}_{R/MR}^r H(A).$$
First, we note that there is a natural injection $H(T)\otimes_\ZZ \ZZ/M\ZZ \hookrightarrow H(A)$ which in turn induces a surjection
$$\Hom_{R/MR}(H(A),R/MR) \to \Hom_{R/MR}(H(T)\otimes_\ZZ \ZZ/M\ZZ ,R/MR).$$
Since $H(T)$ is $\cO$-free, the latter module is isomorphic to $\Hom_R(H(T),R)\otimes_\ZZ \ZZ/M\ZZ$, and thus we obtain a surjection
$$\Hom_{R/MR}(H(A),R/MR) \to \Hom_R(H(T),R)\otimes_\ZZ \ZZ/M\ZZ.$$
This also gives a surjection
$${\bigwedge}_{R/MR}^r \Hom_{R/MR}(H(A),R/MR) \to \left({\bigwedge}_R^r\Hom_R(H(T),R) \right)\otimes_\ZZ \ZZ/M\ZZ.$$
The desired injection is obtained by composing the $R/MR$-dual of this surjection with the natural injection
$$\left( {\bigcap}_R^r H(T) \right)\otimes_\ZZ \ZZ/M\ZZ \hookrightarrow \Hom_R\left({\bigwedge}_R^r \Hom_R(H(T),R),R/MR\right).$$

(ii) By \cite[Lem. 6.9]{bss}, we know that the image of $D_\fn c_\fn \otimes \bigotimes_{\fq \mid \fn}\sigma_\fq$ under the map
(\ref{inj1}) lies in ${\bigcap}_{\cO_\fn[\Gamma]}^r H^1(\cO_{F,\Sigma_\fn},A_\fn) \otimes_\ZZ G_\fn$. Since we have
$$s_\fn(\theta_\fn)=\pm D_\fn c_\fn \otimes \bigotimes_{\fq \mid \fn}\sigma_\fq$$
by Lemma \ref{lem1}(v), the claim follows.
\end{proof}

By Lemma \ref{lem2}(ii), we can regard
$$s_\fn(  \theta_\fn) \in  {\bigcap}_{\cO_\fn[\Gamma]}^r H^1(\cO_{F,\Sigma_\fn},A_\fn)\otimes_\ZZ G_\fn.$$


We define an endomorphism
$$\Psi \in \End\left( \prod_{\fn \in \cN} {\bigcap}_{\cO_\fn[\Gamma]}^r H^1(\cO_{F,\Sigma_\fn},A_\fn)\otimes_\ZZ G_\fn \right)$$
by
$$\Psi((a_\fn)_\fn):=\left( (-1)^{\nu(\fn)}\sum_{\tau \in \mathfrak{S}(\fn)} {\rm sgn}(\tau) a_{\fd_\tau} \otimes \bigotimes_{\fq \mid \fn/\fd_\tau} {\rm Fr}_{\tau(\fq)}^\fq \right)_\fn,$$
where
\begin{itemize}
\item $\mathfrak{S}(\fn)$: the set of permutations of the set $\{\fq \mid \fn\}$,
\item $\fd_{\tau}$: the product of $\fq \mid \fn$ fixed by $\tau$,
\item ${\rm Fr}_{\tau(\fq)}^\fq$: the Frobenius element at $\tau(\fq)$, regarded as an element of $G_\fq (\simeq \cH_\fq)$.
\end{itemize}

\begin{lemma}\label{lempsi}
$\Psi$ is injective.
\end{lemma}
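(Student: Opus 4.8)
The map $\Psi$ records, for each $\fn\in\cN$, the contributions of all sub-ideals $\fd\mid\fn$ weighted by Frobenius data, and the point is that $\Psi$ is "upper-triangular with identity diagonal" with respect to the natural partial order on $\cN$ given by divisibility. The plan is to make this precise. First I would note that, for each $\fn$, the $\fn$-th component of $\Psi((a_\fm)_\fm)$ decomposes as a sum over $\fd\mid\fn$, and that the summand indexed by $\fd=\fn$ is exactly $(-1)^{\nu(\fn)}a_\fn$ (here $\mathfrak{S}(\fn)$ contributes only the identity permutation $\tau$, which fixes every $\fq\mid\fn$, so $\fd_\tau=\fn$ and the empty tensor product over $\fq\mid\fn/\fd_\tau$ is $1$). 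In other words, up to the unit sign $(-1)^{\nu(\fn)}$ on each component, $\Psi=\mathrm{id}+N$, where $N$ lowers the divisibility degree: the $\fn$-component of $N((a_\fm)_\fm)$ depends only on the components $a_\fd$ with $\fd\mid\fn$ and $\fd\neq\fn$.

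Next I would exploit this triangular structure directly to prove injectivity. Suppose $\Psi((a_\fn)_\fn)=0$ with $(a_\fn)_\fn\neq 0$, and choose $\fn$ minimal (with respect to divisibility, equivalently with $\nu(\fn)$ minimal among the "bad" indices and then minimal for inclusion) such that $a_\fn\neq 0$. Then in the $\fn$-component of $\Psi((a_\fm)_\fm)$, every term indexed by a proper divisor $\fd\mid\fn$, $\fd\neq\fn$, involves $a_\fd$; for those $\fd$ with $a_\fd\neq 0$ we would have $\fd$ a proper divisor of $\fn$, contradicting minimality of $\fn$, so in fact all such terms vanish and the $\fn$-component reduces to $(-1)^{\nu(\fn)}a_\fn$. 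Hence $a_\fn=0$, a contradiction. This shows $\ker\Psi=0$.

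There is one genuine technical point to check, which I expect to be the only real obstacle: one must verify that for $\fd\mid\fn$ the term $a_{\fd_\tau}\otimes\bigotimes_{\fq\mid\fn/\fd_\tau}\mathrm{Fr}_{\tau(\fq)}^\fq$ genuinely lands in the target module ${\bigcap}_{\cO_\fn[\Gamma]}^r H^1(\cO_{F,\Sigma_\fn},A_\fn)\otimes_\ZZ G_\fn$ — i.e. that the Frobenius-twist map sending the $\fd$-component into the $\fn$-component is well defined (this uses that for $\fq\mid\fn/\fd$ the prime $\fq$ splits completely in $LFH_K$, so $\mathrm{Fr}_\fq$ lies in $\cH_\fd$ and the relevant cohomology and exterior-bidual modules extend compatibly, exactly as in the construction of $\iota_\fn$ in (\ref{iota}) and the injection (\ref{inj0})), and that reduction mod $M_\fn$ versus $M_\fd$ is compatible since $M_\fd\mid M_\fn$. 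Once this compatibility is in place, the diagonal-term computation above is immediate and the triangular argument goes through verbatim. Thus the proof is: identify the diagonal term of $\Psi$ on each component as $\pm\mathrm{id}$, observe $N$ is strictly lower-triangular for divisibility, and conclude injectivity by the minimal-counterexample argument.
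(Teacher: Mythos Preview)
Your proposal is correct and is essentially the same argument as the paper's: both isolate the identity permutation as the diagonal term $(-1)^{\nu(\fn)}a_\fn$ and kill the off-diagonal terms (indexed by $\tau\neq\mathrm{id}$, hence by proper divisors $\fd_\tau\mid\fn$) by induction on $\nu(\fn)$, which your minimal-counterexample phrasing is equivalent to. The technical well-definedness point you raise is not part of the injectivity proof here, since $\Psi$ is already given as an endomorphism of the ambient product.
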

\begin{proof}
Let $a=(a_\fn)_\fn \in \prod_{\fn \in \cN} {\bigcap}_{\cO_\fn[\Gamma]}^r H^1(\cO_{F,\Sigma_\fn},A_\fn)\otimes_\ZZ G_\fn$ and suppose that $\Psi(a)=0$. We show $a_\fn=0$ by induction on $\nu(\fn)$. When $\nu(\fn)=0$, i.e., $\fn=1$, we have
$$a_1=\Psi(a)_1=0.$$
When $\nu(\fn)>0$, we have
$$(-1)^{\nu(\fn)}\Psi(a)_\fn = a_\fn + \sum_{\tau \in \mathfrak{S}(\fn), \ \tau \neq {\rm id}} {\rm sgn}(\tau) a_{\fd_\tau} \otimes \bigotimes_{\fq \mid \fn/\fd_\tau} {\rm Fr}_{\tau(\fq)}^\fq.$$
By the induction hypothesis, the second term on the right hand side vanishes. Thus we have $a_\fn=(-1)^{\nu(\fn)}\Psi(a)_\fn=0$.
\end{proof}

Let
$${\rm KS}_r(F/K,\chi)=\varprojlim_m {\rm KS}_r(\cP_m)_m \subset \prod_m \prod_{\fn \in \cN_m} {\bigcap}_{\cO_m[\Gamma]}^r \cS(\fn)_m \otimes_\ZZ G_\fn$$
be the module of Kolyvagin systems of rank $r$ defined in \S \ref{limits section}. We have a natural embedding
\begin{eqnarray*}\label{emb koly}
{\rm KS}_r(F/K,\chi) \hookrightarrow \prod_{\fn \in \cN} {\bigcap}_{\cO_\fn[\Gamma]}^r H^1(\cO_{F,\Sigma_\fn},A_\fn)\otimes_\ZZ G_\fn
\end{eqnarray*}
defined by
$$x=(x_m)_m=((x_{m,\fn})_\fn)_m \mapsto (x_{m_\fn,\fn})_\fn,$$
where 
we define $m_\fn \in \ZZ$ by $p^{m_\fn}=M_\fn$ and $x_{m_\fn,\fn} \in {\bigcap}_{\cO_\fn[\Gamma]}^r \cS(\fn)_{m_\fn} \otimes_\ZZ G_\fn$ is regarded as an element of ${\bigcap}_{\cO_\fn[\Gamma]}^r H^1(\cO_{F,\Sigma_\fn},A_\fn)\otimes_\ZZ G_\fn$ via the natural embedding $\cS(\fn)_{m_\fn} \hookrightarrow H^1(\cO_{F,\Sigma_\fn},A_\fn)$. (Note that if $\fn=1$, then $m_\fn$ is not defined and so $x_{m_1,1}$ is understood to be the element $(x_{m,1})_m$ of $\varprojlim_m {\bigcap}_{\cO_m[\Gamma]}^r H^1(\cO_{F,\Sigma},T/p^mT)={\bigcap}_{\cO[\Gamma]}^r H^1(\cO_{F,\Sigma},T)$.)


\begin{proposition}\label{unram ks} The system $\kappa:=\Psi((s_\fn(  \theta_\fn))_\fn)$ belongs to ${\rm KS}_r(F/K,\chi).$
\end{proposition}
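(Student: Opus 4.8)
The plan is to verify directly that $\kappa=(\kappa_\fn)_{\fn\in\cN}$, with $\kappa_\fn:=\Psi\big((s_\fm(\theta_\fm))_\fm\big)_\fn$, satisfies the two defining properties of a Kolyvagin system of rank $r$: for each $\fn\in\cN$ one has (a) $\kappa_\fn\in{\bigcap}_{\cO_\fn[\Gamma]}^r\cS(\fn)_{m_\fn}\otimes_\ZZ G_\fn$ (inside ${\bigcap}_{\cO_\fn[\Gamma]}^r H^1(\cO_{F,\Sigma_\fn},A_\fn)\otimes_\ZZ G_\fn$), and (b) the recursion $v_\fq(\kappa_\fn)=\varphi_\fq(\kappa_{\fn/\fq})$ for every $\fq\mid\fn$. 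Compatibility of the components under the transition maps $\pi'_m$ — needed so that $\kappa$ defines an element of the inverse limit ${\rm KS}_r(F/K,\chi)$ — is automatic, since every $\kappa_\fn$ is extracted from the single fixed strict Euler system $(c_{F(\fn)})_\fn$; one could also first reduce to the case $r=1$ by the exterior-power-bidual argument used for Theorem \ref{th main}(i) (cf. \cite[\S6.5]{bss}), but the verification below is uniform in $r$. The common starting point for (a) and (b) is Lemma \ref{lem2}(ii), which identifies $s_\fn(\theta_\fn)$ with $\pm D_\fn c_\fn\otimes\bigotimes_{\fq\mid\fn}\sigma_\fq$ and shows that this class descends to ${\bigcap}_{\cO_\fn[\Gamma]}^r H^1(\cO_{F,\Sigma_\fn},A_\fn)\otimes_\ZZ G_\fn$ — the higher-rank equivariant form of Kolyvagin's statement that $D_\fn c_\fn$ is $\cH_\fn$-fixed modulo $M_\fn$, which in turn rests on the Euler system norm relation recorded in Lemma \ref{lem0}.

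For (a): since $c=(c_{F(\fn)})_\fn$ is strict, each $c_{F(\fn)}$ is (in its $(p,\chi)$-component) a genuine unit — by the argument of \cite[Prop.~6.2(i)]{rubinstark} in the Rubin-Stark case, and by definition of ${\rm ES}_r(\cK/K,S,\chi)$ in general — so the valuation condition $v_\fl(\kappa_\fn)=0$ for $\fl\nmid\fn$ is immediate, and so is the condition at each $\fp\in S_p(K)$ with $\chi(\fp)=1$: for such $\fp$, Hypothesis \ref{hyp chi}(iii) forces $F/K$ to be unramified at $\fp$, and then, exactly as in the Kummer-theoretic computation in the proof of Lemma \ref{lem:ur}, the localisation of a global unit at $\fp$ is a local unit, hence lies in $H^1_{\cF_{\rm ur}}(K_\fp,-)$; while at each $\fp\in S_p(K)$ with $\chi(\fp)\neq1$ one has $H^1_{\cF_{\rm ur}}(K_\fp,-)=H^1(K_\fp,-)$ (by the vanishing of $H^0(K_\fp,\overline{T}^{\vee}(1))$), so there is nothing to check. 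The remaining local condition $\widetilde l_\fq(\kappa_\fn)\in\cO_{m_\fn}[\Gamma]\otimes_\ZZ\Pi_\fq$ for $\fq\mid\fn$ is the standard global-to-local property of Kolyvagin derivative classes — that $D_\fq$ converts ``unramified at $\fq$'' into ``of $\Pi_\fq$-type at $\fq$'', via the telescoping identity $(\sigma_\fq-1)D_\fq=\#G_\fq-\sum_{g\in G_\fq}g$ — in the higher-rank equivariant form of \cite[\S4.3]{sbA}.

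For (b): one expands $v_\fq(\kappa_\fn)$ and $\varphi_\fq(\kappa_{\fn/\fq})$ using the definition of $\Psi$ and sorts the permutations of $\{\fl\mid\fn\}$ (respectively of $\{\fl\mid\fn/\fq\}$) according to whether $\fq$ is a fixed point. Only the terms in which $\fq$ is fixed survive $v_\fq$ — the others involve $c_{\fd_\tau}$ with $\fq\nmid\fd_\tau$, hence a unit at $\fq$ — and these match the terms produced by $\varphi_\fq$ via Kolyvagin's local congruence, which identifies $v_\fq(D_\fq c_\fn)\bmod M_\fq$ with the finite-to-singular comparison, i.e. with $\varphi_\fq$ applied to $c_{\fn/\fq}$, the norm relation of Lemma \ref{lem0} entering once more. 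The sign conventions are those of \cite[\S\S3.1, 4.2]{sbA} and the bookkeeping is essentially that of \cite[\S4.3]{sbA} (cf. also the construction underlying \cite[Th.~6.12]{bss}).

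I expect the main obstacle to be the $p$-adic analysis in (a): one must be certain that allowing $\chi$ to be trivial on a $p$-adic Frobenius element — the very source of ``trivial zeroes'' — does not spoil the Kolyvagin-system property, and this hinges precisely on Lemma \ref{lem:ur} (the validity of (H$_5^{\rm u}$)) together with the strictness of the input Euler system, which guarantees that its components localise to local units at exactly those $p$-adic places where $\chi$ is trivial.
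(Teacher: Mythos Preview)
Your outline is structurally the same as the paper's: the paper's one-line proof just invokes Theorem~\ref{th main}(i), and the proof of that theorem is exactly the verification you sketch (reduce to $r=1$ via \cite[\S6.5]{bss}, then check the Kolyvagin-system axioms directly, as carried out in the Appendix). So there is no difference of method, but your emphasis is misplaced in a way that hides the one genuinely non-trivial step.

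You flag the $p$-adic local conditions and Lemma~\ref{lem:ur} as ``the main obstacle'', but in fact these are the easy part: strictness means each $c_{F(\fn)}$ is a global unit in the $\chi$-component, so the valuation conditions at $p$ (and at all $\fl\nmid\fn$) are automatic, and Lemma~\ref{lem:ur} plays no role in the construction of $\cD_{F,r}$ --- it is used only for the structure theory in Theorem~\ref{th main}(ii). The hard point, which you dismiss as ``standard'' or ``Kolyvagin's local congruence'', is the \emph{congruence relation}
\[
u_\fq(\sigma c_\fn)=\frac{{\N}\fq-1}{M}\,{\rm Fr}_\fq^{-1}\,u_\fq(\sigma c_{\fn/\fq})
\]
(Theorem~\ref{th cong} of the Appendix). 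This is what underlies both the finite--singular relation (your (b), via Theorem~\ref{th fs}) and the transverse local condition $\widetilde l_\fq(\kappa_\fn)\in\cO_{m_\fn}[\Gamma]\otimes_\ZZ\Pi_\fq$ at $\fq\mid\fn$ (your (a), via Lemma~\ref{compute}). The telescoping identity alone does \emph{not} give the latter: the raw derivative class $D_\fn c_\fn$ need not satisfy the $\Pi_\fq$-condition, which is precisely why the correction $\Psi$ is introduced, and showing that $\Psi$ does the job is the content of Lemma~\ref{compute}, whose proof passes through the modified system $\omega_\fn$ and rests squarely on Theorem~\ref{th cong}.

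The congruence relation, in turn, is exactly where Hypothesis~\ref{hyp} enters. If $\cK$ contains a suitable $\ZZ_p$-extension (case (i)) the proof is a short limit argument; otherwise (case (ii), $L\not\subset K(\mu_p)$) one needs Rubin's auxiliary-prime trick (Lemma~\ref{cheb}). You never mention Hypothesis~\ref{hyp}, yet without it the whole construction --- and hence the Proposition --- fails. So your sketch is correct in shape but the identified ``obstacle'' is the wrong one: move the spotlight from the $p$-adic places to Theorem~\ref{th cong}, and acknowledge that the transverse condition at $\fq\mid\fn$ requires the full argument of Lemma~\ref{compute}, not just the telescoping identity.
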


\begin{proof} This follows from Theorem \ref{th main}(i) since the homomorphism $\cD_{F,r}$ sends $c$ to $\kappa$.
\end{proof}

\subsection{Regulator Kolyvagin systems}

We first define the module of Stark systems `over $\cO[\Gamma]$'. The construction is similar to that defined `over $\cO_m[\Gamma]$' in \S \ref{sec koly}.


For $\fm,\fn\in \cN$ with $\fn \mid \fm$, we define a map
$$v_{\fm,\fn}: {\bigcap}_{\cO[\Gamma]}^{r+\nu(\fm)} H^1(\cO_{F,\Sigma_\fm},T) \to {\bigcap}_{\cO[\Gamma]}^{r+\nu(\fn)}H^1(\cO_{F,\Sigma_\fn},T)$$
as follows. First, for $\fq \in \cP$
let $v_\fq : LF^\times \to \ZZ[ \Delta \times \Gamma]$ be the map defined in (\ref{def v}).
This map induces
$$v_\fq : H^1(\cO_{F,\Sigma_\fm},T)\simeq (\cO_{LF,\Sigma_\fm}^\times)_\chi \to \cO[\Gamma].$$
Then we set
$$v_{\fm,\fn}:=\pm {\bigwedge}_{\fq \mid \fm/\fn}v_\fq$$
and note that the argument of \cite[Prop. 3.6]{sano} shows that the image of this map is contained in ${\bigcap}_{\cO[\Gamma]}^{r+\nu(\fn)}H^1(\cO_{F,\Sigma_\fn},T)$.

We can consider the inverse limit
$${\rm SS}_r(T):=\varprojlim_{\fn \in \cN} {\bigcap}_{\cO[\Gamma]}^{r+\nu(\fn)} H^1(\cO_{F,\Sigma_\fn},T)$$
by taking $v_{\fm,\fn}$ as the transition map.
It is well-known that
$$\epsilon:=(\epsilon_\fn)_\fn \in {\rm SS}_r(T).$$
(See loc. cit.)
We construct an `algebraic regulator' homomorphism 
$$\cR: {\rm SS}_r(T) \to \prod_{\fn \in \cN} {\bigcap}_{\cO[\Gamma]}^r H^1(\cO_{F,\Sigma_\fn},T)\otimes_\ZZ G_\fn$$
as follows. Let
$$\varphi_\fq^{\fn}: H^1(\cO_{F,\Sigma_\fm},T)\simeq (\cO_{LF,\Sigma_\fm}^\times)_\chi \to \cO[\Gamma]\otimes_\ZZ I_\fn/I_\fn^2$$
be the map constructed in \S \ref{formulation} (for any $\fq \in \cP$ and $\fm,\fn\in\cN$). We define $\varphi_\fq^{\rm fs}$ by
$$\varphi_\fq^{\rm fs}: H^1(\cO_{F,\Sigma_\fn},T) \xrightarrow{\varphi_\fq^\fq} \cO[\Gamma]\otimes_\ZZ I_\fq/I_\fq^2 \simeq \cO[\Gamma] \otimes_\ZZ G_\fq,$$
where the last isomorphism is induced by
$$I_\fq/I_\fq^2 \xrightarrow{\sim} G_\fq; \ \sigma_\fq-1 \mapsto \sigma_\fq.$$
For $a=(a_\fn)_\fn \in {\rm SS}_r(T)$, we define the regulator by
$$\cR(a)_\fn:=\left({\bigwedge}_{\fq \mid \fn}\varphi_\fq^{\rm fs} \right)(a_\fn).$$


We now relate $({\rm Rec}_\fn(\epsilon_\fn))_\fn$ with $\cR(\epsilon)$. Recall that
$${\rm Rec}_\fn: {\bigcap}_{\cO[\Gamma]}^{r+\nu(\fn)} H^1(\cO_{F,\Sigma_\fn},T) \to {\bigcap}_{\cO[\Gamma]}^r H^1(\cO_{F,\Sigma_\fn},T)\otimes_\ZZ Q_\fn$$
is defined by ${\bigwedge}_{\fq \mid \fn}\varphi_\fq^\fn$. We shall use the projector
$$s_\fn: Q_\fn \to G_\fn$$
and the endomorphism
$$\Psi \in \End\left( \prod_{\fn \in \cN} {\bigcap}_{\cO[\Gamma]}^r H^1(\cO_{F,\Sigma_\fn},T)\otimes_\ZZ G_\fn \right)$$
defined in the previous subsection. (Note that we can regard
$${\bigcap}_{\cO[\Gamma]}^r H^1(\cO_{F,\Sigma_\fn},T)\otimes_\ZZ G_\fn \subset {\bigcap}_{\cO_\fn[\Gamma]}^r H^1(\cO_{F,\Sigma_\fn},A_\fn)\otimes_\ZZ G_\fn$$
via the injection constructed in the same way as (\ref{inj1}) in Lemma \ref{lem2}(i).)

\begin{proposition}\label{prop reg} $\Psi\left( \left(s_\fn\left({\rm Rec}_\fn(\epsilon_\fn)\right)\right)_\fn\right) = \cR(\epsilon) \in {\rm KS}_r(F/K,\chi).$
\end{proposition}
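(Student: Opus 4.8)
The plan is to prove the identity in two stages, first relating $\cR(\epsilon)$ to the Kolyvagin-system module and then showing that the left-hand side equals $\cR(\epsilon)$ by a direct comparison of the relevant maps. For the first stage, I would invoke Theorem~\ref{th main}(ii): by Remark~\ref{rem unr}, ${\rm SS}_r(F/K,\chi)$ coincides with the limit of Stark systems, and the regulator map $\cR_r$ on it agrees (under the Kummer-theoretic identification $(\cO_{LF,\Sigma_\fn}^\times)_\chi \simeq H^1(\cO_{F,\Sigma_\fn},T)$ used throughout \S\ref{idea}) with the map $\cR$ constructed above applied to $\epsilon=(\epsilon_\fn)_\fn$. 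Since $\epsilon \in {\rm SS}_r(T)$ (as recalled in the text, following \cite[Prop.~3.6]{sano}) and $\cR_r$ takes values in ${\rm KS}_r(F/K,\chi)$, it follows that $\cR(\epsilon) \in {\rm KS}_r(F/K,\chi)$; this is where the second containment assertion of the proposition comes from.

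For the main identity, I would work $\fn$ by $\fn$ and unwind both sides against the definitions. On the right, $\cR(\epsilon)_\fn = \left({\bigwedge}_{\fq\mid\fn}\varphi_\fq^{\rm fs}\right)(\epsilon_\fn)$, where $\varphi_\fq^{\rm fs}$ is $\varphi_\fq^\fq$ followed by the canonical identification $I_\fq/I_\fq^2 \xrightarrow{\sim} G_\fq$. On the left, ${\rm Rec}_\fn(\epsilon_\fn) = \left({\bigwedge}_{\fq\mid\fn}\varphi_\fq^\fn\right)(\epsilon_\fn)$ lands in ${\bigcap}^r \otimes_\ZZ Q_\fn$, and then we apply $s_\fn: Q_\fn \to G_\fn$ and the endomorphism $\Psi$. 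The key computation is to understand the composite $s_\fn \circ \left({\bigwedge}_{\fq\mid\fn}\varphi_\fq^\fn\right)$: using the multiplicativity of $\varphi_\fq^\fn$ in the group-ring variable and the structure of $I_\fn/I_\fn^{\nu(\fn)+1}$ as (essentially) $\bigotimes_{\fq\mid\fn}(I_\fq/I_\fq^2)$ together with lower-order terms, the projector $s_\fn$ picks out precisely the `top' component $\prod_{\fq\mid\fn}(\sigma_\fq-1)$, discarding cross-terms; this is the content of Lemma~\ref{lem1}(ii)--(iv). The effect is that $s_\fn\left({\rm Rec}_\fn(\epsilon_\fn)\right)$ equals the `leading part' of $\left({\bigwedge}_{\fq\mid\fn}\varphi_\fq^{\rm fs}\right)(\epsilon_\fn)$, but computed in the field $LF$ rather than $LF(\fn)$, and it carries Frobenius correction terms coming from the lower-order pieces of each $\varphi_\fq^\fn$ (the $P_\fq$ or ${\rm Fr}_\fq$ contributions that appear because $\varphi_\fq^\fn$ records $({\rm rec}_\fQ(\sigma a)-1)$ rather than a pure `order' term). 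The role of $\Psi$ is exactly to sum over permutations $\tau$ with Frobenius insertions ${\rm Fr}_{\tau(\fq)}^\fq$, and I would check that $\Psi$ reassembles these corrected leading parts into the clean product $\left({\bigwedge}_{\fq\mid\fn}\varphi_\fq^{\rm fs}\right)(\epsilon_\fn) = \cR(\epsilon)_\fn$. Concretely, this amounts to verifying that $\Psi\circ\big(s_\fn\circ{\rm Rec}_\fn\big)_\fn$ and $\cR$ agree on $\epsilon$, which by the product/permutation combinatorics reduces to the matching of a single factor: $s_\fq\circ\varphi_\fq^\fq$ versus $\varphi_\fq^{\rm fs}$, modulo the bookkeeping of Frobenius terms that $\Psi$ is designed to absorb — this is the algebraic analogue of Lemma~\ref{lem0}(ii) and Lemma~\ref{lem2}(ii) on the `derivative' side.

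The main obstacle, I expect, will be the permutation-combinatorial reconciliation: making precise how the `lower-order' terms of each $\varphi_\fq^\fn$ (the Frobenius pieces, living in $I_\fq/I_\fq^2$ via $P_\fq^\fq$) propagate through the wedge product ${\bigwedge}_{\fq\mid\fn}\varphi_\fq^\fn$ and then through $s_\fn$, and verifying that the resulting sum over divisors $\fd\mid\fn$ with Frobenius-decorated lower terms is exactly inverted by $\Psi$. This is structurally parallel to the computation in the proof of Lemma~\ref{lem0}(ii), where $\theta_\fn = s_\fn(\theta_\fn) - \sum_{\fd\mid\fn,\ \fd\neq\fn}(-1)^{\nu(\fn/\fd)}\theta_\fd\prod_{\fq\mid\fn/\fd}P_\fq^\fd$, so the cleanest route is probably to prove a formal lemma saying that for any system $(a_\fn)_\fn$ satisfying the same type of `truncated norm relation' in the group-ring variable, one has $\Psi\big((s_\fn(\text{top part}))_\fn\big) = (\text{clean regulator})_\fn$, and then apply it both to $(s_\fn(\theta_\fn))_\fn$ (giving Proposition~\ref{unram ks}) and to $(s_\fn({\rm Rec}_\fn(\epsilon_\fn)))_\fn$ (giving the present proposition). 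I would also need the comparison, implicit in \cite[\S 2]{sano} and \cite[\S 5]{MRselmer}, between the local reciprocity description of $\varphi_\fq^\fn$ and the map $\varphi_\fq$ from \S\ref{sec koly} used to define ${\rm KS}_r$, so that the two `regulator' constructions literally match; this is routine but needs to be stated. Granting these, the identity $\Psi\big((s_\fn({\rm Rec}_\fn(\epsilon_\fn)))_\fn\big) = \cR(\epsilon)$ follows, and the membership $\cR(\epsilon) \in {\rm KS}_r(F/K,\chi)$ follows from the first stage.
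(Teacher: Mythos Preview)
Your outline for the main equality is essentially the approach the paper has in mind: the paper simply cites \cite[Lem.~4.29]{sbA}, and that lemma is exactly the permutation-combinatorial reconciliation you sketch, decomposing each $\varphi_\fq^\fn$ along $I_\fn/I_\fn^2 \simeq \bigoplus_{\fr\mid\fn} G_\fr$, applying $s_\fn$ to extract the diagonal piece $\prod_{\fq\mid\fn}(\sigma_\fq-1)$, and then checking that $\Psi$ reassembles the off-diagonal Frobenius contributions into the clean product $\bigwedge_{\fq\mid\fn}\varphi_\fq^{\rm fs}$. So for that part your plan and the paper's coincide.

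For the containment $\cR(\epsilon)\in{\rm KS}_r(F/K,\chi)$, however, your route via Theorem~\ref{th main}(ii) is a detour that creates an unnecessary obligation. You would need to identify ${\rm SS}_r(T)=\varprojlim_{\fn\in\cN}{\bigcap}^{r+\nu(\fn)}H^1(\cO_{F,\Sigma_\fn},T)$ with the module ${\rm SS}_r(F/K,\chi)=\varprojlim_m{\rm SS}_r(\cP_m)_m$ of \S\ref{limits section}, and to match $\cR$ with $\cR_r$ under that identification; these are defined as limits over different index categories (ideals $\fn$ versus integers $m$) and the comparison, while ultimately true, is not ``routine.'' The paper instead cites \cite[Prop.~4.3]{sbA}, which is the direct verification that the regulator of any Stark system satisfies the finite--singular relation $v_\fq(\cR(\epsilon)_\fn)=\varphi_\fq(\cR(\epsilon)_{\fn/\fq})$; this is a local computation at each $\fq\mid\fn$ and avoids the limit comparison entirely. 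Your argument is not wrong, but the direct check is both shorter and logically cleaner.
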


\begin{proof} The proof of the equality $\Psi\left( \left(s_\fn\left({\rm Rec}_\fn(\epsilon_\fn)\right)\right)_\fn\right) = \cR(\epsilon)$ is the same as that of \cite[Lem. 4.29]{sbA}. The containment $\cR(\epsilon) \in {\rm KS}_r(F/K,\chi)$ follows from \cite[Prop. 4.3]{sbA}.\end{proof}

\subsection{Interpretation of Conjecture \ref{mrs} via Kolyvagin systems}

\begin{theorem}\label{thm2} Conjecture \ref{mrs} is valid for every ideal $\fn$ in $\cN$ if and only if one has
$$\kappa=\cR(\epsilon).$$
\end{theorem}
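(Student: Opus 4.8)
The plan is to deduce Theorem~\ref{thm2} by combining the two descriptions of Kolyvagin systems established just above: the ``motivic'' description via Kolyvagin derivatives (Proposition~\ref{unram ks}) and the ``analytic'' description via the algebraic regulator (Proposition~\ref{prop reg}). Concretely, I would first recall that, by definition, $\theta_\fn=\sum_{\sigma\in\cH_\fn}\sigma c_\fn\otimes\sigma^{-1}$ and that Conjecture~\ref{mrs} for the ideal $\fn$ asserts precisely the equality $\theta_\fn=\iota_\fn({\rm Rec}_\fn(\epsilon_\fn))$ in ${\bigcap}_{\cO[\cG_\fn]}^r H^1(\cO_{F(\fn),\Sigma_\fn},T)\otimes_\ZZ \ZZ[\cH_\fn]/I_\fn^{\nu(\fn)+1}$ (using the injectivity of the map into Galois cohomology noted in \S\ref{idea} and of $\iota_\fn$). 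So the content of the theorem is to translate the system of such equalities, as $\fn$ ranges over $\cN$, into the single identity $\kappa=\cR(\epsilon)$ of Kolyvagin systems.

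The key observation is that both sides of this translation are obtained by applying the \emph{same} pair of operators — first the projector $s_\fn\colon Q_\fn\to G_\fn$, then the triangularizing automorphism $\Psi$ — to the two collections $(\theta_\fn)_\fn$ and $(\iota_\fn({\rm Rec}_\fn(\epsilon_\fn)))_\fn$. Indeed, by Proposition~\ref{unram ks}, $\kappa=\Psi((s_\fn(\theta_\fn))_\fn)$, and by Proposition~\ref{prop reg}, $\cR(\epsilon)=\Psi((s_\fn({\rm Rec}_\fn(\epsilon_\fn)))_\fn)$. Hence the plan is: (a) check that $s_\fn$ and $\Psi$ are compatible with the inclusion $\iota_\fn$ — i.e.\ that applying $s_\fn$ to $\iota_\fn({\rm Rec}_\fn(\epsilon_\fn))$ lands in the image of ${\bigcap}^r_{\cO[\Gamma]}H^1(\cO_{F,\Sigma_\fn},T)\otimes_\ZZ G_\fn$ and agrees there with $s_\fn({\rm Rec}_\fn(\epsilon_\fn))$ computed directly (this is essentially the $\cH_\fn$-invariance statement recorded in the proof of Lemma~\ref{lem2}(i), together with the construction of $\iota_\fn$); and (b) conclude that Conjecture~\ref{mrs} for all $\fn\in\cN$ implies $(s_\fn(\theta_\fn))_\fn=(s_\fn({\rm Rec}_\fn(\epsilon_\fn)))_\fn$, whence $\kappa=\cR(\epsilon)$ after applying $\Psi$.

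For the reverse implication I would use the injectivity of $\Psi$ (Lemma~\ref{lempsi}) and run an induction on $\nu(\fn)$ to recover each individual equality $s_\fn(\theta_\fn)=s_\fn({\rm Rec}_\fn(\epsilon_\fn))$ from $\kappa=\cR(\epsilon)$, and then upgrade this to the full equality $\theta_\fn=\iota_\fn({\rm Rec}_\fn(\epsilon_\fn))$ in $\ZZ[\cH_\fn]/I_\fn^{\nu(\fn)+1}$. Here the point is that $s_\fn$ only extracts the ``top graded piece'' in $Q_\fn=I_\fn^{\nu(\fn)}/I_\fn^{\nu(\fn)+1}$, so a priori one recovers the two sides of Conjecture~\ref{mrs} only modulo lower-order terms; but by Lemma~\ref{lem0}(ii) both $\theta_\fn$ and ${\rm Rec}_\fn(\epsilon_\fn)$ satisfy the \emph{same} recursion expressing them in terms of $s_\fn$ of themselves and the strictly-lower-$\fd$ data multiplied by the Euler factors $\prod_{\fq\mid\fn/\fd}P_\fq^\fd$ — for $\theta_\fn$ this is exactly Lemma~\ref{lem0}(ii), and for ${\rm Rec}_\fn(\epsilon_\fn)$ the analogous recursion follows from the norm-compatibility of the Rubin--Stark elements $\epsilon_\fn$ under the transition maps $v_{\fm,\fn}$ together with the defining formula ${\rm Rec}_\fn=\bigwedge_{\fq\mid\fn}\varphi_\fq^\fn$. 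An induction on $\nu(\fn)$ then propagates equality of the lower-order data and of the $s_\fn$-projections into equality of $\theta_\fn$ and $\iota_\fn({\rm Rec}_\fn(\epsilon_\fn))$ themselves.

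I expect the main obstacle to be step~(a), and more precisely the bookkeeping needed to verify that the operators $s_\fn$, the injections $\iota_\fn$ (and their cohomological avatars (\ref{inj0}), (\ref{inj1}), (\ref{inj2})), the Frobenius-twisted automorphism $\Psi$, and the passage from $T$-coefficients to $A_\fn$-coefficients all commute in the way required — including getting the signs $(-1)^{\nu(\fn)}$ and the identifications $I_\fn^{\nu(\fn)}/I_\fn^{\nu(\fn)+1}\simeq G_\fn$, $I_\fq/I_\fq^2\simeq G_\fq$ to match on both sides. This is conceptually routine given Lemmas~\ref{lem1}, \ref{lem0}, \ref{lem2} and \ref{lempsi}, but it is where any genuine care is needed; once these compatibilities are in place, the equivalence $\kappa=\cR(\epsilon)\iff$ Conjecture~\ref{mrs} for all $\fn$ follows formally from the injectivity of $\Psi$ and the matching recursions.
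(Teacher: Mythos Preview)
Your proposal is correct and follows essentially the same approach as the paper: use Proposition~\ref{unram ks} and Proposition~\ref{prop reg} to write $\kappa=\Psi((s_\fn(\theta_\fn))_\fn)$ and $\cR(\epsilon)=\Psi((s_\fn({\rm Rec}_\fn(\epsilon_\fn)))_\fn)$, invoke the injectivity of $\Psi$ (Lemma~\ref{lempsi}) to pass from $\kappa=\cR(\epsilon)$ to the equalities $s_\fn(\theta_\fn)=s_\fn({\rm Rec}_\fn(\epsilon_\fn))$, and then upgrade these to Conjecture~\ref{mrs} by induction on $\nu(\fn)$ using the matching recursions for $\theta_\fn$ and ${\rm Rec}_\fn(\epsilon_\fn)$. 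One small wording point: the injectivity of $\Psi$ already gives the $s_\fn$-equalities for all $\fn$ at once (no induction needed at that step); the induction enters only in the upgrade, and the recursion for ${\rm Rec}_\fn(\epsilon_\fn)$ comes down to the local identity $\varphi_\fq^\fd(-)=v_\fq(-)\cdot P_\fq^\fd$ for $\fq\nmid\fd$, which is the precise form of the ``norm-compatibility'' you invoke.
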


\begin{proof}
We only show the `if part', since the `only if part' is straightforward from the construction (and unnecessary for the proof of Theorem \ref{main}.)

Suppose that we have $\kappa =\cR(\epsilon)$. Recall that $\kappa$ is constructed by
$$\kappa=\Psi((s_\fn(  \theta_\fn))_\fn).$$
Also, by Proposition \ref{prop reg}, we have
$$\cR(\epsilon)=\Psi((s_\fn({\rm Rec}_\fn(\epsilon_\fn)))_\fn).$$
Since $\Psi$ is injective by Lemma \ref{lempsi}, we have
\begin{align*}
s_\fn( \theta_\fn)=s_\fn({\rm Rec_\fn}(\epsilon_\fn)) \text{ in }{\bigcap}_{\cO[\Gamma]}^r H^1(\cO_{F,\Sigma_\fn},T) \otimes_\ZZ G_\fn.
\end{align*}
If we regard $s_\fn(\theta_\fn)$ as an element of ${\bigcap}_{\cO[\cG_\fn]}^r H^1(\cO_{F(\fn),\Sigma_\fn},T)\otimes_\ZZ G_\fn$, this means
\begin{align}\label{srel}
s_\fn( \theta_\fn)=s_\fn(\iota_\fn({\rm Rec_\fn}(\epsilon_\fn))) \text{ in }{\bigcap}_{\cO[\cG_\fn]}^r H^1(\cO_{F(\fn),\Sigma_\fn},T) \otimes_\ZZ G_\fn,
\end{align}
where $\iota_\fn$ is the injection (\ref{iota}) (extended on $H^1$). In the following, we regard $G_\fn \subset Q_\fn$ via the injection in Lemma \ref{lem1}(iii).
By Lemma \ref{lem0}(ii), we have
\begin{align}\label{trel}
  \theta_\fn=s_\fn(  \theta_\fn)-\sum_{\fd \mid \fn, \ \fd \neq \fn}(-1)^{\nu(\fn/\fd)}  \theta_\fd \prod_{\fq \mid \fn/\fd}P_\fq^\fd
 \text{ in }{\bigcap}_{\cO[\cG_\fn]}^r H^1(\cO_{F(\fn),\Sigma_\fn},T) \otimes_\ZZ Q_\fn.
\end{align}
On the other hand, noting that
$$\varphi_\fq^\fd(-)= v_\fq(-)\cdot P_\fq^\fd$$
holds for $\fq \nmid \fd$ (see \cite[Chap. XIII, Prop. 13]{LF}), we easily deduce
$$\pi_\fd\left( {\rm Rec}_\fn(\epsilon_\fn)\right)={\rm Rec}_\fd(\epsilon_\fd) \prod_{\fq \mid \fn/\fd}P_\fq^\fd$$
and so
\begin{align}\label{recrel}
{\rm Rec}_\fn(\epsilon_\fn)=s_\fn({\rm Rec}_\fn(\epsilon_\fn)) - \sum_{\fd\mid \fn, \ \fd \neq \fn}(-1)^{\nu(\fn/\fd)} {\rm Rec}_\fd(\epsilon_\fd) \prod_{\fq \mid \fn/\fd}P_\fq^\fd \\
\text{ in }{\bigcap}_{\cO[\Gamma]}^r H^1(\cO_{F,\Sigma_\fn},T)\otimes_\ZZ Q_\fn.\nonumber
\end{align}
By (\ref{srel}), (\ref{trel}) and (\ref{recrel}), we deduce by induction on $\nu(\fn)$ that
$$  \theta_\fn=\iota_\fn({\rm Rec}_\fn(\epsilon_\fn)).$$
This is exactly the formulation of Conjecture \ref{mrs}, and so we have proved the claimed result.
\end{proof}

\subsection{The proof of Theorem \ref{main}}\label{proof main}

By Theorem \ref{thm2}, it suffices to prove that
$$\kappa=\cR(\epsilon) \text{ in }{\rm KS}_r(F/K,\chi).$$
Since we know $\kappa_1=c_1=\epsilon_1=\cR(\epsilon)_1$, it is sufficient to prove that the map
$${\rm KS}_r(F/K,\chi) \to  \varprojlim_m {\bigcap}_{\cO_m[\Gamma]}^r \cS(1)_m={\bigcap}_{\cO[\Gamma]}^r U_F; \ x \mapsto x_1$$
is injective.

By Theorem \ref{th main}(ii), we know that the $\cO[\Gamma]$ module ${\rm KS}_r(F/K,\chi)$ is free of rank one.
Let $x \in {\rm KS}_r(F/K,\chi)$ be a basis. It is sufficient to show that
$${\rm Ann}_{\cO[\Gamma]}(x_1)=0.$$
However, since ${\rm Ann}_{\cO[\Gamma]}(x_1)$ is equal to ${\rm Ann}_{\cO[\Gamma]}(\im x_1)$, this follows from the equality
$$
\im (x_1) ={\rm Fitt}_{\cO[\Gamma]}^0({\rm Cl}(E)_\chi)
$$
in 
Theorem~\ref{th main}(ii) and the fact that the ideal class group ${\rm Cl}(E)$ is finite.

This completes the proof of Theorem \ref{main}.

\appendix

\section{The derivative construction in the rank one case}

We fix data $K , p, \chi,L, \Delta, \cO, F,\Gamma, S,\cK$ as in \S \ref{sec eks}. Then the primary aim of this appendix is to give, under Hypothesis \ref{hyp}, an explicit construction for each natural number $m$ of a canonical `derivative' homomorphism
$${\rm ES}_1(\cK/K,S,\chi) \to {\rm KS}_1(F/K,\chi,\cP_m)_m.$$

The construction is described in Theorem \ref{th koly} below and, although it is in principle well-known (via the arguments of Rubin in \cite{R} and of Mazur and Rubin in \cite{MRkoly}), we shall give a fully self-contained construction in the context of this article. This approach is perhaps of some interest itself but also allows us to demonstrate that the maps constructed are Galois equivariant.

The key fact that we must prove is the `congruence relation' described in Theorem \ref{th cong}. Given this result, the strategy of the remainder of the argument is essentially identical to that used in \cite{R} and \cite{MRkoly}, although at various places we have to verify arguments given in loc. cit. for abelian extensions of $\QQ$ extend to the setting of abelian extensions of $K$.

If $\cK$ contains a non-trivial $\ZZ_p$-power extension of $K$, then the argument required to prove Theorem \ref{th cong} is relatively straightforward and works in the setting of general $p$-adic representations. However, in the case that $\cK$ does not contain a $\ZZ_p$-power extension, which is of particular importance in our setting (see Remark \ref{ntz}), details of this sort of argument have not yet appeared in the literature, with only a sketch of this case being discussed by Rubin in \cite[\S 9.1]{R}. 

\subsection{Notation and hypothesis}

Let $H_K$ be the Hilbert class field of $K$ and $K(1)$ the maximal $p$-extension of $K$ inside $H_K$.

First, note that we may assume that $F$ contains $K(1)$, since for any subfield $F'$ of $F/K$ the derivative map ${\rm ES}_1(\cK/K,S,\chi) \to {\rm KS}_1(F'/K,\chi,\cP_m)_m$ is obtained by the composition ${\rm ES}_1(\cK/K,S,\chi) \to {\rm KS}_1(F/K,\chi,\cP_m)_m \to {\rm KS}_1(F'/K,\chi,\cP_m)_m$, where the second map is induced by the norm map for $F/F'$. (Although the set $\cP_m$ defined for $F$ is slightly different from that for $F'$, this does not matter in practice. In fact, one may shrink $\cP_m$ so that the Chebotarev density argument works.)



We now fix a positive integer $m$ and set $M:=p^m$.
We recall some basic notations:
\begin{itemize}
\item $\cP=\cP_m:=\{ \fq \notin S\cup S_p(K) \mid \text{$\fq$ splits completely in $LF H_K(\mu_M,(\cO_K^\times)^{1/M})$}\}$;
\item $\cQ:=\{ \fq \notin S\cup S_p(K) \mid \text{$\fq$ splits completely in $K(\mu_M,(\cO_K^\times)^{1/M})$}\}$;
\item $\cN=\cN_m:=\{\text{square-free products of primes in $\cP$}\}$.
\end{itemize}
For $\fq \in \cQ$, we denote by $K(\fq)$ the unique subfield of the ray class field of $K$ modulo $\fq$ such that $[K(\fq):K(1)] = M$. (Note that this $K(\fq)$ is different from that in \S \ref{sec koly}.) We set $G_\fq:=\Gal(K(\fq)/K(1)) $.
For a square-free product $\fn $ of primes in $\cQ$, we use the following notations:
\begin{itemize}
\item $\nu(\fn):=\# \{\fq \mid \fn\}$;
\item $K(\fn):=\prod_{\fq \mid \fn} K(\fq)$ (compositum);
\item $F(\fn):=F K(\fn)$;
\item $\cG_\fn:=\Gal(F(\fn)/K)\simeq \Gal(LF(\fn)/L)$;
\item $\cH_\fn:=\Gal(F(\fn)/F)\simeq \Gal(LF(\fn)/LF)$ (so $\Gamma=\Gal(F/K)\simeq \cG_\fn /\cH_\fn$);
\item $G_\fn:=\bigotimes_{\fq \mid \fn} G_\fq$.
\end{itemize}
Note that $\cH_\fn\simeq \prod_{\fq \mid \fn}G_\fq$. From this, if $\fm\mid \fn$, then we can regard $\cH_\fm$ both as a quotient and a subgroup of $\cH_\fn$.

We assume that $\cK$ contains $F$ and $K(\fq)$ for all but finitely many primes $\fq$ of $K$.
Suppose that a strict $p$-adic Euler system of rank $1$
$$
c=(c_{F'})_{F'} \in \prod_{F' \in \Omega(\cK/K)}U_{F'}
$$
for $(\cK/K,S,\chi)$ is given (see Definition \ref{p integral}).
We set
$$
c_\fn:=c_{F(\fn)}.
$$

{\it Throughout this appendix, we assume that $U_{F'}:=(\cO_{LF'}^\times)_\chi$ is free as an $\cO$-module for every $F' \in \Omega(\cK/K)$. We also assume Hypothesis \ref{hyp}.}

For each prime $\fq$ of $K$, we fix a place $\fQ$ of $\overline \QQ$ lying above $\fq$. For any finite extension $J/K$, the place of $J$ lying under $\fQ$ is also denoted by $\fQ$.

To simplify the notation, we set
$$E:=LF\text{ and }E(\fn):=LF(\fn).$$
Since any $\fq$ in $\cP$ splits completely in $E$, we often identify the fields $E_\fQ, F_\fQ$ and $K_\fq.$

For any non-archimedean local field $D$, we denote by $\FF_D$ its residue field. The cardinality of $\FF_{K_\fq}$ is denoted by ${\N}\fq$.

For any abelian group $X$, we set $X/M:=X/MX.$ We also often use additive notations: for example, if $J$ is a field and $a \in J^\times$, then we shall denote the $M$-th power of $a$ by $Ma$.

\subsection{The congruence relation}\label{sec cong}

The aim of this subsection is to give a proof of the `congruence relation' of Euler systems (see Theorem \ref{th cong} below).

For any $\fn \in \cN$ and $\fq \in \cP$, let
$$u_\fq: U_{F(\fn)}=(\cO_{E(\fn)}^\times)_\chi \to (\ZZ[\Delta]\otimes_\ZZ \FF_{E(\fn)_\fQ}^\times )_\chi=\cO\otimes_\ZZ \FF_{F(\fn)_\fQ}^\times$$
be the map induced by
$$\cO_{E(\fn)}^\times \to \ZZ[\Delta]\otimes_\ZZ \FF_{E(\fn)_\fQ}^\times ; \ a \mapsto \sum_{\sigma \in \Delta}\sigma^{-1}\otimes(\sigma a \bmod \fQ).$$
(Thus $u_\fq$ is essentially the `modulo primes above $\fq$' map.)

\begin{theorem}\label{th cong}
For any $\fn \in \cN$, $\fq \mid \fn$ and $\sigma \in \cG_\fn$, we have
$$u_\fq(\sigma c_\fn)=\frac{{\N}\fq-1}{M} {\rm Fr}_\fq^{-1} u_\fq(\sigma c_{\fn/\fq}).$$
(Note that $M$ divides ${\N}\fq-1$, since $\fq$ splits completely in $K(\mu_M)$.)
\end{theorem}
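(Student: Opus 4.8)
\emph{The plan.} The plan is to reduce the assertion to a single local congruence at $\fQ$, to derive that congruence up to an $M$-th root of unity from the Euler system norm relation, and then to remove the remaining ambiguity using Hypothesis~\ref{hyp}, following the strategy of Rubin~\cite{R} and Mazur--Rubin~\cite{MRkoly}. First I would observe that, since $\fq$ splits completely in $LF$, the fixed place $\fQ$ identifies $E(\fn)_\fQ$ with $F(\fn)_\fQ$, and that $u_\fq$ is obtained---via the identification of the $\Delta$-orbit of $\fQ$ with $\ZZ[\Delta]$ and subsequent $\chi$-projection---from the reduction-at-$\fQ$ homomorphism $\cO_{E(\fn)}^\times\to\FF_{F(\fn)_\fQ}^\times$. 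One checks that this homomorphism, after $\chi$-projection, is $\cO$-linear, is invariant under $G_\fq=\Gal(E(\fn)/E(\fn/\fq))$ (which acts trivially on the residue fields above $\fq$, the extension being totally ramified there), and transforms under ${\rm Fr}_\fq$ compatibly with the statement. Hence, by Galois translation over the places above $\fq$ and $\chi$-projection, it suffices to prove the single identity $\overline{c_\fn}=\bigl(\overline{{\rm Fr}_\fq^{-1}c_{\fn/\fq}}\bigr)^{(\N\fq-1)/M}$ in $\FF_{F(\fn)_\fQ}^\times$, where $\overline{(\,\cdot\,)}$ denotes reduction at $\fQ$.

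Next I would bring in the norm relation. The extension $E(\fn)_\fQ/E(\fn/\fq)_\fQ$ is cyclic of degree $M$, totally and tamely ramified, with group $G_\fq$ (it is the base change of $K(\fq)_\fQ/K_\fq$), so $G_\fq$ acts trivially on the common residue field $\FF:=\FF_{E(\fn)_\fQ}$. Applying reduction at $\fQ$ to the rank-one norm relation $\N_{G_\fq}(c_\fn)=(1-{\rm Fr}_\fq^{-1})c_{\fn/\fq}$ gives $(\overline{c_\fn})^{M}=\overline{(1-{\rm Fr}_\fq^{-1})c_{\fn/\fq}}$; since ${\rm Fr}_\fq$ induces the $\N\fq$-power Frobenius on $\FF$, writing $\beta:=\overline{{\rm Fr}_\fq^{-1}c_{\fn/\fq}}$ the right-hand side equals $\beta^{\N\fq}\cdot\beta^{-1}=\beta^{\N\fq-1}$. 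Thus $\overline{c_\fn}$ and $\beta^{(\N\fq-1)/M}$ have the same $M$-th power, so $\zeta:=\overline{c_\fn}\cdot\beta^{-(\N\fq-1)/M}$ lies in $\mu_M(\FF)$; as $\N\fq\equiv 1\pmod M$ this group has order exactly $M$ and is contained in $\FF_{K_\fq}^\times$, so after $\chi$-projection $\zeta$ contributes precisely an element of $\cO/M$. It remains to prove $\zeta=1$.

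This last point is the crux, and it is where Hypothesis~\ref{hyp} enters. If $\cK$ contains a $\ZZ_p$-power extension $K_\infty/K$ as in Hypothesis~\ref{hyp}(i): because $S_{\rm ram}(K_\infty/K)\subset S$, the sets $S(\,\cdot\,)$ are constant along the towers $F(\fn)K_\infty/F(\fn)$ and $F(\fn/\fq)K_\infty/F(\fn/\fq)$, so $c$ restricted to them is strictly norm-compatible; because $\fq\notin S$ these towers are unramified at $\fq$, and because $\fq$ does not split completely in $K_\infty$ they realize the unramified $\ZZ_p$-extensions of $F(\fn)_\fQ$ and $F(\fn/\fq)_\fQ$. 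Running the previous step at each finite layer $K_n$, and using that norm-compatibility of $c$ is compatible with reduction at $\fQ$ and commutes with ${\rm Fr}_\fq^{-1}$, one finds that the resulting ambiguities $\zeta_n\in\mu_M$ form a norm-coherent family; but the norm map on $\mu_M$ across $p^{k}\ge p^m=M$ layers of this residue-field $\ZZ_p$-extension is multiplication by $p^{k}$, hence vanishes on $\mu_M$, and so $\zeta=\zeta_0=1$. If instead $L\not\subset K(\mu_p)$---equivalently $\chi$ is not a power of $\omega$---then no such tower need be available, and the hypothesis $L\not\subset K(\mu_p)$ must itself play the role otherwise played by $K_\infty$; here I would follow the sketched argument of Rubin in \cite[\S9.1]{R}, in which a suitable family of auxiliary Kolyvagin-type primes, together with a global (Poitou--Tate type) argument exploiting that $\chi$ is not a power of $\omega$, is used to force $\zeta=1$. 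I expect this second case to be the main obstacle: Rubin's treatment is only a sketch and is carried out over $\QQ$, so the substantive work is to develop it in full over the general base field $K$, checking at each point that reasoning valid for abelian extensions of $\QQ$ extends to abelian extensions of $K$.
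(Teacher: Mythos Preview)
Your treatment of case~(i) (Hypothesis~\ref{hyp}(i)) is essentially the paper's argument, phrased in terms of tracking the $\mu_M$-ambiguity through the tower rather than invoking the $\cO$-freeness of $\varprojlim_m(\cO\otimes\FF_{F(\fn)_{m,\fQ}}^\times)$ directly; either formulation is fine.

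The genuine gap is case~(ii), and your proposed direction is not the one that works. No global duality or Poitou--Tate argument is needed, and \cite[\S9.1]{R} is not the sharpest reference; the argument the paper follows is that of Rubin's Crelle paper~\cite{rubincrelle}. The key observation you are missing is this: the hypothesis $L\not\subset K(\mu_p)$ guarantees the existence of $\gamma\in\Gal(E(\fn/\fq)/E\cap K_M)$ acting nontrivially on $L$ and trivially on $F(\fn/\fq)$; since $\chi$ has prime-to-$p$ order, $1-\chi(\gamma)\in\cO^\times$, so it suffices to prove the desired equality after multiplying both sides by $(1-\gamma)$. One then introduces a \emph{single} auxiliary prime $\fr\in\cQ$, $\fr\nmid\fn$, chosen by Chebotarev so that ${\rm Fr}_\fr=\gamma^{-1}$ on $E(\fn/\fq)$ and so that ${\N}_{G_\fr}$ acts on $\FF_{E(\fn\fr)_\fQ}^\times$ as $mM$ for some integer $m$ (the latter requires a short argument: one arranges that ${\rm Fr}_\fr$ is nontrivial on $K_M(q^{1/p})/K_M$ for a generator $q$ of $\fq$, forcing $G_\fr\subset\cD_\fq$). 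Three applications of the Euler-system norm relation---for $G_\fq$ on $c_{\fn\fr}$, for $G_\fr$ on $c_{\fn\fr}$, and for $G_\fr$ on $c_{\fn\fr/\fq}$---then combine algebraically to give exactly $(1-\gamma)u_\fq(c_\fn)=(1-\gamma)\tfrac{{\N}\fq-1}{M}{\rm Fr}_\fq^{-1}u_\fq(c_{\fn/\fq})$.

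So the missing idea is not a cohomological one but the ``multiply by $1-\gamma$ and use one Chebotarev prime'' trick; once you see this, case~(ii) is no harder than case~(i).
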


\begin{proof}

Choosing a lift $\widetilde \sigma \in G_K$ of $\sigma \in \cG_\fn$ and replacing $(\widetilde \sigma c_{F'})_{F'}$ by $(c_{F'})_{F'}$, we may assume $\sigma=1$.


We first consider the case Hypothesis \ref{hyp}(i). In this case the proof is easier. We follow the argument by Kato in \cite[\S 1]{katoeuler}.

Take a $\ZZ_p$-extension $K_\infty/K$ satisfying the condition in Hypothesis \ref{hyp}(i).
We denote the $m$-th rayer of $K_\infty/K$ by $K_m$. We set
\[ F(\fn)_m:=F(\fn)K_m\,\,\text{ and }\,\, c_{\fn,m}:=c_{F(\fn)_{m}}.\]
We can define $U_{F(\fn)_m} \to \cO\otimes_\ZZ \FF_{F(\fn)_{m,\fQ}}^\times$ in the same way as $u_\fq$, and denote it also by $u_\fq$.
Since $S_{\rm ram}(K_\infty/K) \subset S$, we see by the norm relation that $(c_{\fn,m})_m$ lies in the inverse limit $\varprojlim_m U_{F(\fn)_m}$. It is sufficient to prove the equality
$$(u_\fq(c_{\fn,m}))_m=\left( \frac{{\N}\fq-1}{M}{\rm Fr}_\fq^{-1} u_\fq(c_{\fn/\fq,m})\right)_m$$
in $\varprojlim_m (\cO \otimes_\ZZ \FF_{F(\fn)_{m,\fQ}}^\times)=\varprojlim_m (\cO \otimes_\ZZ \FF_{F(\fn/\fq)_{m,\fQ}}^\times)$. Since $\fq$ does not split completely in $K_\infty$, the module $\varprojlim_m (\cO \otimes_\ZZ \FF_{F(\fn)_{m,\fQ}}^\times)$ is $\cO$-free, so it is sufficient to prove
\begin{eqnarray}\label{cong eq}
M \cdot u_\fq(c_{\fn,m})=({\N}\fq-1){\rm Fr}_\fq^{-1}u_\fq(c_{\fn/\fq,m})
\end{eqnarray}
for every $m$. By the norm relation, we have
$${\N}_{G_\fq}(c_{\fn,m})=(1-{\rm Fr}_\fq^{-1})c_{\fn/\fq,m},$$
where ${\N}_{G_\fq}:=\sum_{\sigma \in G_\fq}\sigma$. (We identify $G_\fq=\Gal(F(\fn)_m/F(\fn/\fq)_m)$.) Since $G_\fq$ acts trivially on $\FF_{F(\fn)_{m,\fQ}}^\times=\FF_{F(\fn/\fq)_{m,\fQ}}^\times$ and $M=\# G_\fq$, we have
$$M \cdot u_\fq(c_{\fn,m})=(1-{\rm Fr}_\fq^{-1})u_\fq(c_{\fn/\fq,m}).$$
Since $1-{\rm Fr}_\fq^{-1}=({\N}\fq-1){\rm Fr}_\fq^{-1}+(1-{\N}\fq {\rm Fr}_\fq^{-1})$ and $1-{\N}\fq {\rm Fr}_\fq^{-1}$ annihilates $\FF_{F(\fn/\fq)_{m,\fQ}}^\times$, we obtain (\ref{cong eq}). Thus the theorem is proved in the case (a).

Next, we consider the case Hypothesis \ref{hyp}(ii). Our proof is almost the same as that of \cite[Th. 2.1]{rubincrelle} by Rubin, but, since it is assumed in loc. cit. that $\fq$ splits completely in $E(\fn/\fq)$, we slightly modify the argument. 

We set $K_M:=K(\mu_M,(\cO_K^\times)^{1/M})$. The assumption $L \not\subset K(\mu_p)$ ensures the existence of $\gamma \in \Gal(E(\fn/\fq)/E\cap K_M)$ such that $\gamma $ acts non-trivially on $L$ and trivially on $F(\fn/\fq)$. Since the order of $\chi$ is prime to $p$, we see that $1-\chi(\gamma) \in \cO^\times$.
So it is sufficient to show
\begin{eqnarray}\label{gamma cong}
(1-\gamma)u_\fq( c_\fn)=(1-\gamma)\frac{{\N}\fq-1}{M} {\rm Fr}_\fq^{-1} u_\fq( c_{\fn/\fq}).
\end{eqnarray}
By Lemma \ref{cheb} below, we can choose a prime $\fr \in \cQ$ with $\fr \nmid\fn$ such that
\begin{itemize}
\item ${\rm Fr}_\fr=\gamma^{-1}$ on $E(\fn/\fq)$,
\item there exists $m \in \ZZ$ such that ${\N}_{G_\fr}=mM$ on $\FF_{E(\fn\fr)_\fQ}^\times=\FF_{F(\fn\fr/\fq)_\fQ}^\times$.
\end{itemize}
(The proof of Lemma \ref{cheb} actually shows that $G_\fr$ is contained in the decomposition group in $\Gal(F(\fn\fr/\fq)/F)$ at $\fq$, so it acts on $\FF_{F(\fn\fr/\fq)_\fQ}^\times$.)
By the norm relation, we have equalities
\begin{eqnarray}\label{eq1}\hskip 0.3truein
M \cdot u_\fq(c_{\fn\fr})={\N}_{G_\fq}u_\fq(c_{\fn\fr})=(1-{\rm Fr}_\fq^{-1})u_\fq(c_{\fn\fr/\fq})=({\N}\fq-1){\rm Fr}_\fq^{-1} u_\fq(c_{\fn\fr/\fq}),
\end{eqnarray}
\begin{eqnarray}\label{eq2}
mM\cdot u_\fq(c_{\fn\fr})={\N}_{G_\fr}u_\fq(c_{\fn\fr})=(1-{\rm Fr}_\fr^{-1})u_\fq(c_{\fn})=(1-\gamma)u_\fq(c_\fn),
\end{eqnarray}
\begin{eqnarray}\label{eq3}\hskip 0.2truein
mM \cdot u_\fq(c_{\fn\fr/\fq})={\N}_{G_\fr}u_\fq(c_{\fn\fr/\fq}) =(1-{\rm Fr}_\fr^{-1})u_\fq(c_{\fn/\fq})=(1-\gamma) u_\fq(c_{\fn/\fq}).
\end{eqnarray}
Using these equalities, we compute
\begin{align*}
(1-\gamma)u_\fq(c_\fn) &\stackrel{(\ref{eq2})}{=} mM \cdot u_\fq(c_{\fn\fr})
\\
&\stackrel{(\ref{eq1})}{=}m({\N}\fq -1){\rm Fr}_\fq^{-1} u_\fq(c_{\fn\fr/\fq})
\stackrel{(\ref{eq3})}{=}(1-\gamma)\frac{{\N}\fq-1}{M}{\rm Fr}_\fq^{-1} u_\fq(c_{\fn/\fq}).
\end{align*}
This is the desired equality (\ref{gamma cong}).
\end{proof}

\begin{lemma}\label{cheb}
Let $\fn \in \cN$ and $\fq \mid \fn$. For any $\gamma \in \Gal(E(\fn/\fq)/E\cap K_M)$, there exists a prime $\fr \in \cQ$ with $\fr \nmid\fn$ such that
\begin{itemize}
\item[(i)] ${\rm Fr}_\fr=\gamma$ on $E(\fn/\fq)$,
\item[(ii)] there exists $m \in \ZZ$ such that ${\N}_{G_\fr}=mM$ on $\FF_{E(\fn\fr)_\fQ}^\times=\FF_{F(\fn\fr/\fq)_\fQ}^\times$.
\end{itemize}
\end{lemma}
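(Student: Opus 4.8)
The plan is to produce the prime $\fr$ by a Chebotarev density argument, after recasting condition~(ii) as a splitting condition on $\fr$ by means of reciprocity at $\fq$. Write $M:=p^{m}$ and $K_{M}:=K(\mu_{M},(\cO_{K}^{\times})^{1/M})$. Before invoking Chebotarev I would set up the field--theoretic input. Using that $F$ contains $K(1)$ and that every prime dividing $\fn/\fq$ lies in $\cP$ — hence lies outside $S$ and is unramified in $K_{M}/K$ — one checks successively that $E\cap K(\fn/\fq)=K(1)$, that $\Gal(E(\fn/\fq)/K)$ is abelian, and then, decomposing this abelian group into its $p$-part and its prime-to-$p$ part and exploiting the ramification of $K(\fn/\fq)/K(1)$ at each prime dividing $\fn/\fq$, that
\[
E(\fn/\fq)\cap K_{M}=E\cap K_{M}.
\]
In particular the given $\gamma\in\Gal(E(\fn/\fq)/E\cap K_{M})$ is trivial on $E(\fn/\fq)\cap K_{M}$.

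Next I would reformulate condition~(ii). As indicated just after the statement, (ii) holds once $G_{\fr}$ lies in the decomposition group of $\fQ$ in $\Gal(F(\fn\fr/\fq)/F)$; since $\fq$ is unramified in that extension, this becomes a condition on the Frobenius of $\fq$ alone, and it amounts to the image of this Frobenius in $G_{\fr}\cong\Gal(K(\fr)/K(1))$ being a generator. Because $\fq\in\cP$ splits completely in $H_{K}$, the ideal $\fq$ is principal, say $\fq=(\alpha)$ with $\alpha\in\cO_{K}$; Artin reciprocity for the ray class field modulo $\fr$ then identifies that image with the class of $\alpha$ in the cyclic quotient of order $M$ of $(\cO_{K}/\fr)^{\times}$ on which $G_{\fr}$ is modelled (the class group of $K$ contributing only a factor of order prime to $p$, hence irrelevant to the order). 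Thus the image is a generator precisely when $\alpha$ is not a $p$-th power modulo $\fr$, equivalently when the Frobenius of $\fr$ in $\Gal(K(\mu_{p},\alpha^{1/p})/K(\mu_{p}))\cong\ZZ/p$ is nontrivial. Since $\fq=(\alpha)$ is unramified in $E(\fn/\fq)K_{M}/K$, the element $\alpha$ is not a $p$-th power in $E(\fn/\fq)K_{M}$; hence $[K_{M}(\alpha^{1/p}):K_{M}]=p$ and, by the same valuation argument, $E(\fn/\fq)\cap K_{M}(\alpha^{1/p})=E\cap K_{M}$.

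Now set $\Omega:=E(\fn/\fq)\cdot K_{M}(\alpha^{1/p})$; this is a Galois extension of $K$, being the compositum of the Galois extensions $E(\fn/\fq)$, $K_{M}$ and $K(\mu_{p},\alpha^{1/p})$ of $K$. By the intersection identities above, the prescriptions ``$g$ restricts to $\gamma$ on $E(\fn/\fq)$'' and ``$g$ is trivial on $K_{M}$ and restricts to a generator of $\Gal(K_{M}(\alpha^{1/p})/K_{M})$'' agree on the overfield $E\cap K_{M}$, where both are the identity, and so patch to an element $g\in\Gal(\Omega/K)$. Since each of $E(\fn/\fq)$, $K_{M}$ and $K(\mu_{p},\alpha^{1/p})$ is Galois over $K$, the three requirements ``${\rm Fr}_{\fr}=\gamma$ on $E(\fn/\fq)$'', ``${\rm Fr}_{\fr}$ trivial on $K_{M}$'' and ``${\rm Fr}_{\fr}$ nontrivial on $K(\mu_{p},\alpha^{1/p})$'' depend only on the conjugacy class of ${\rm Fr}_{\fr}$ in $\Gal(\Omega/K)$ — a point that matters since $\Omega/K$ need not be abelian. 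By the Chebotarev density theorem there is a positive-density set of primes $\fr$ of $K$, unramified in $\Omega$, whose Frobenius lies in the conjugacy class of $g$; discarding the finitely many $\fr$ that lie in $S\cup S_{p}(K)$ or divide $\fn$ leaves a prime with the desired properties: it splits completely in $K_{M}$, so $\fr\in\cQ$ with $\fr\nmid\fn$; it satisfies (i) because $\Gal(E(\fn/\fq)/K)$ is abelian; and it satisfies the reformulation of (ii), whence (ii) itself by the computation performed in the proof of Theorem~\ref{th cong}.

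The substantive difficulty is not the Chebotarev step but the Galois-theoretic bookkeeping: establishing the intersection identities $E(\fn/\fq)\cap K_{M}=E\cap K_{M}$ and $E(\fn/\fq)\cap K_{M}(\alpha^{1/p})=E\cap K_{M}$, on which the existence of $g$ depends, and carrying through the reciprocity translation of condition~(ii) while keeping track of the class group of $K$ inside $G_{\fr}$. This is precisely the point at which the argument diverges from that of \cite[Th.~2.1]{rubincrelle}, where one assumes $\fq$ splits completely in $E(\fn/\fq)$ and these intersections are automatically as small as possible.
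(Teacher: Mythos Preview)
Your approach is essentially the same as the paper's: write $\fq=(\alpha)$, adjoin $\alpha^{1/p}$ to $K_M$, establish the intersection $E(\fn/\fq)\cap K_M(\alpha^{1/p})=E\cap K_M$ via ramification, and apply Chebotarev to the compositum; then translate condition~(ii) via Artin reciprocity into the statement that $\alpha$ is not a $p$-th power modulo $\fr$, equivalently that ${\rm Fr}_\fq$ generates $G_\fr$. Your write-up is more explicit than the paper's on the intersection identities and on the non-abelian nature of $\Omega/K$ (points the paper glosses over), but slightly less complete at the end: the elementary deduction that $G_\fr\subset\cD_\fq$ forces ${\N}_{G_\fr}\equiv 0\pmod M$ on the residue field (via ${\rm Fr}_\fq\mapsto{\N}\fq\equiv 1\pmod M$) is carried out in the paper's proof of the lemma itself, not in the proof of Theorem~\ref{th cong}, so your forward reference is misplaced---but the missing computation is one line and you clearly have it in hand.
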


\begin{proof}
We follow the argument by Rubin in \cite[Lem. 2.2]{rubincrelle}.
Since $\fq$ splits completely in $H_K$, it is a principal ideal: there exists a prime element $q \in \cO_K$ such that $\fq=(q)$. Let $\sigma$ be a generator of $\Gal(K_M(q^{1/p})/K_M)$. Considering ramification, one sees $E(\fn/\fq)\cap K_M(q^{1/p})=E\cap K_M$. By the Chebotarev density theorem, there exists a prime $\fr$ with $\fr \nmid\fn$ such that
\begin{itemize}
\item ${\rm Fr}_\fr=\gamma$ in $\Gal(E(\fn/\fq)/E\cap K_M)$,
\item ${\rm Fr}_\fr=\sigma$ in $\Gal(K_M(q^{1/p})/E\cap K_M)$.
\end{itemize}
Then the condition (i) is satisfied. Since ${\rm Fr}_\fr=\sigma=1$ on $K_M$, we see that $\fr \in \cQ$. We need to check the condition (ii).

Let $\cD_\fq \subset \Gal(F(\fn\fr/\fq)/F)$ be the decomposition group at the (fixed) prime above $\fq$. We claim
$$\Gal(F(\fn\fr/\fq)/F(\fn/\fq))(\simeq G_\fr) \subset \cD_\fq.$$
To prove this claim, it is sufficient to show that $G_\fr=\Gal(K(\fr)/K(1))$ is generated by ${\rm Fr}_\fq$. By class field theory, we have an isomorphism
$$\FF_{K_\fr}^\times/M \xrightarrow{\sim} G_\fr,$$
which sends $q^{-1}$ to ${\rm Fr}_\fq$. Since ${\rm Fr}_\fr$ generates $\Gal(K_M(q^{1/p})/K_M)$ by the choice of $\fr$, we see that $q \notin  (\FF_{K_\fr}^\times)^p$. Hence $q^{-1}$ generates $\FF_{K_\fr}^\times/M$, and so ${\rm Fr}_\fq$ generates $G_\fr$. We have proved the claim.

By the above claim, we see that a generator of $\Gal(F(\fn\fr/\fq)/F(\fn/\fq))$ is of the form ${\rm Fr}_\fq^a$ with some $a \in \ZZ$, so ${\N}_{G_\fr}$ acts on $\FF_{F(\fn\fr/\fq)_\fQ}^\times$ via $\sum_{i=0}^{M-1}{\rm Fr}_\fq^{ai}$.
Since ${\rm Fr}_\fq={\N}\fq$ on $\FF_{F(\fn\fr/\fq)_\fQ}^\times$, it is sufficient to show that $\sum_{i=0}^{M-1} {\N}\fq^{ai}$ is divisible by $M$. But this follows from
${\N}\fq \equiv 1 \pmod M$.
\end{proof}

\subsection{Kolyvagin derivatives}

The aim of this subsection is to review the construction of `Kolyvagin derivatives' (in Definition \ref{def der}) and prove Theorem \ref{th fs}, the so-called `finite-singular relation'. The argument used is essentially the same as that in \cite[Prop. 2.4]{rubinlang}.
 Note that in our $\GG_m$ case one of the key ingredients of the proof is Hilbert's Theorem 90 (more precisely, the existence of the element $\beta_\fn$ in Lemma \ref{lem a1} below), whose analogue does not seem to hold in the case of a general motive. However, as observed by several authors (such as Nekov\'a\v r \cite{nekovar}, Perrin-Riou \cite{PR}, Kato \cite{katoeuler} and Rubin \cite{R}), one can adapt the argument below in order to prove a generalization of Theorem \ref{th fs}. 

For each $\fq \in \cP$, we fix a generator $\sigma_\fq$ of $G_\fq$. Then Kolyvagin's derivative operator is defined by
$$D_\fq:=\sum_{i=1}^{M-1}i\sigma_\fq^{i} \in \ZZ[G_\fq].$$
By computation, one checks the so-called `telescoping identity'
\begin{eqnarray}\label{tele}
(\sigma_\fq-1)D_\fq=M-{\N}_{G_\fq},
\end{eqnarray}
where ${\N}_{G_\fq}:=\sum_{i=0}^{M-1}\sigma_\fq^i$.
For $\fn \in \cN$, we identify $\cH_\fn$ with $\prod_{\fq \mid \fn}G_\fq$ and then set
$$D_\fn:=\prod_{\fq \mid \fn}D_\fq \in \ZZ[\cH_\fn].$$

\begin{lemma}\label{lem a1}\
\begin{itemize}
\item[(i)] For any $\fn \in \cN$ and $\sigma \in \cH_\fn$ we have $(\sigma-1)D_\fn c_\fn \in M \cdot U_{F(\fn)}.$
\item[(ii)]For any $\fn \in \cN$, there exists an element $\beta_\fn$ of $(E(\fn)^\times)_\chi$ with the following property: for any $\sigma \in \cH_\fn$ we have
$$
(\sigma-1)\beta_\fn=\frac 1M (\sigma-1)D_\fn c_\fn.
$$
(Note that `$\frac{1}{M}$' is well-defined, since $U_{F(\fn)}$ is assumed to be $\cO$-free.)
\end{itemize}
\end{lemma}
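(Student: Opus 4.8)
The plan is to prove (i) by induction on $\nu(\fn)$ using Kolyvagin's telescoping identity and the Euler system norm relation, and then to deduce (ii) from (i) by a Hilbert 90 argument.

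For (i): first note that, since $M\cdot U_{F(\fn)}$ is an $\cH_\fn$-stable submodule and $(\sigma\tau-1)=\sigma(\tau-1)+(\sigma-1)$, the set $\{\sigma\in\cH_\fn\mid (\sigma-1)D_\fn c_\fn\in M\cdot U_{F(\fn)}\}$ is a subgroup of $\cH_\fn$; as $\cH_\fn\cong\prod_{\fl\mid\fn}G_\fl$ is generated by the $\sigma_\fl$ with $\fl\mid\fn$, it suffices to treat $\sigma=\sigma_\fl$. Fix $\fl\mid\fn$ and write $\fn=\fl\fm$ with $\fl\nmid\fm$. Using $D_\fn=D_\fl D_\fm$ together with the telescoping identity $(\sigma_\fl-1)D_\fl=M-\N_{G_\fl}$ (formula~(\ref{tele})), one obtains
\[
(\sigma_\fl-1)D_\fn c_\fn = M\cdot D_\fm c_\fn - D_\fm\, \N_{G_\fl} c_\fn .
\]
The first term is visibly in $M\cdot U_{F(\fn)}$. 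For the second, I identify $G_\fl=\Gal(F(\fn)/F(\fm))$ (a ramification argument shows $K(\fl)\cap F(\fm)=K(1)$), and note that, since $S(F(\fn))=S\cup\{\fl\mid\fn\}$, the unique prime ramifying in $F(\fn)/K$ but not in $F(\fm)/K$ is $\fl$; hence the norm relation of Definition~\ref{p integral} gives $\N_{G_\fl}c_\fn=(1-{\rm Fr}_\fl^{-1})c_\fm$.

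Now comes the key point. Because $\fl\in\cP$ splits completely in $F$, the Frobenius ${\rm Fr}_\fl$ restricts trivially to $F$, so ${\rm Fr}_\fl\in\Gal(F(\fm)/F)=\cH_\fm$; as $\cH_\fm$ is abelian it commutes with $D_\fm\in\ZZ[\cH_\fm]$, whence $D_\fm\,\N_{G_\fl}c_\fn=(1-{\rm Fr}_\fl^{-1})D_\fm c_\fm$. By the inductive hypothesis applied to $\fm$ (with $\nu(\fm)=\nu(\fn)-1$), the element $D_\fm c_\fm$ is fixed modulo $M$ by $\cH_\fm$, so $(1-{\rm Fr}_\fl^{-1})D_\fm c_\fm\in M\cdot U_{F(\fm)}\subseteq M\cdot U_{F(\fn)}$. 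This proves the inductive step; the base case $\fn=1$ is trivial (and the case $\fm=1$ is covered since then ${\rm Fr}_\fl=1$).

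For (ii): given (i) and the $\cO$-freeness (hence torsion-freeness) of $U_{F(\fn)}$, the assignment $f\colon\cH_\fn\to U_{F(\fn)}$, $\sigma\mapsto \tfrac1M(\sigma-1)D_\fn c_\fn$, is well defined, and applying injectivity of multiplication by $M$ to the identity $M f(\sigma\tau)=(\sigma\tau-1)D_\fn c_\fn=(\sigma-1)D_\fn c_\fn+\sigma(\tau-1)D_\fn c_\fn$ shows $f(\sigma\tau)=f(\sigma)+\sigma f(\tau)$; that is, $f$ is a $1$-cocycle of $\cH_\fn$ with values in $U_{F(\fn)}\subseteq (E(\fn)^\times)_\chi$. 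Since $L$ and $F(\fn)$ are linearly disjoint over $F$ (the former having prime-to-$p$ degree, the latter being a $p$-extension), $\cH_\fn\cong\Gal(E(\fn)/E)$, so Hilbert's Theorem~90 gives $H^1(\cH_\fn,E(\fn)^\times)=0$; as $|\Delta|$ is prime to $p$ the idempotent $e_\chi\in\cO[\Delta]$ exhibits $(E(\fn)^\times)_\chi$ as an $\cH_\fn$-module direct summand of $\cO\otimes_\ZZ E(\fn)^\times$, and $\ZZ$-flatness of $\cO$ then yields $H^1(\cH_\fn,(E(\fn)^\times)_\chi)=0$. Therefore $f$ is a coboundary: there exists $\beta_\fn\in(E(\fn)^\times)_\chi$ with $(\sigma-1)\beta_\fn=f(\sigma)=\tfrac1M(\sigma-1)D_\fn c_\fn$ for all $\sigma\in\cH_\fn$, as claimed.

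The main obstacle lies entirely in part (i): the crux is showing that the correction term $(1-{\rm Fr}_\fl^{-1})D_\fm c_\fm$ vanishes modulo $M$, which rests on two arithmetic inputs about $\fl\in\cP$ — that $\fl$ splits completely in $F$ (forcing ${\rm Fr}_\fl\in\cH_\fm$) and that $\fl$ is the only newly ramified prime between $F(\fm)$ and $F(\fn)$ (so the norm relation has the clean shape $\N_{G_\fl}c_\fn=(1-{\rm Fr}_\fl^{-1})c_\fm$) — together with the inductive hypothesis; I will need to verify carefully the bookkeeping $S(F(\fn))=S\cup\{\fl\mid\fn\}$ that justifies the latter. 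Once (i) is established, part (ii) is essentially formal.
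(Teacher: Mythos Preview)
Your proposal is correct and follows exactly the standard argument that the paper invokes by reference: for (i) you carry out the induction on $\nu(\fn)$ using the telescoping identity (\ref{tele}) and the Euler system norm relation (precisely the computation indicated by the paper's citations to \cite[Lem.~6.10]{bss}, \cite[Lem.~2.1]{rubinlang}, \cite[Lem.~4.4.2(i)]{R}), and for (ii) your Hilbert~90 argument is identical to the paper's own proof. The bookkeeping you flag---that $S(F(\fn))\setminus S(F(\fm))=\{\fl\}$ and that ${\rm Fr}_\fl\in\cH_\fm$ because $\fl\in\cP$ splits completely in $F$---is correct and straightforward to verify from the definitions in the appendix.
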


\begin{proof}
Claim (i) is easily checked by computation using (\ref{tele}). See \cite[Lem. 6.10]{bss} for example. (See also \cite[Lem. 2.1]{rubinlang} and \cite[Lem. 4.4.2(i)]{R}.)

We show claim (ii). Note that
$$\cH_\fn(=\Gal(E(\fn)/E)) \to (E(\fn)^\times)_\chi; \ \sigma \mapsto \frac 1M (\sigma-1)D_\fn c_\fn$$
is a 1-cocycle. So it is sufficient to show that $H^1(\cH_\fn, (E(\fn)^\times)_\chi)=0$. This is deduced from the vanishing of $H^1(\cH_\fn,E(\fn)^\times)$ that is a consequence of Hilbert's Theorem 90.\end{proof}

\begin{definition}\label{def der}
Let $\fn \in \cN$. We define the Kolyvagin derivative by
$$\kappa_\fn'=\kappa'(c_\fn):=D_\fn c_\fn -M \beta_\fn.$$
One sees that this element lies in $(E^\times)_\chi$, and its image in $(E^\times/M)_\chi$ does not depend on the choice of $\beta_\fn$. We often regard $\kappa_\fn'$ as an element of $(E^\times/M)_\chi$.
\end{definition}

We recall some notations from \S \ref{sec koly}. For any prime $\fq$ of $K$, let
$$v_\fq : (E^\times/M)_\chi \to (\ZZ/M[\Delta \times \Gamma])_\chi =\cO/M[\Gamma]$$
be the map induced by (\ref{def v}).
For $\fq \in \cP$, let
$$\varphi_\fq: (E^\times/M)_\chi \to \cO/M[\Gamma] \otimes_\ZZ G_\fq$$
be the map induced by (\ref{def phi}).
We set
$$\cS^\fn:=\{a \in (E^\times/M)_\chi \mid v_\fq(a)=0 \text{ for every $\fq \nmid \fn $}\}.$$
(This is $\cS^\fn_m$ in \S \ref{sec koly}.)

\begin{theorem}\label{th fs}
Let $\fn \in \cN$.
\begin{itemize}
\item[(i)] $\kappa'_\fn \in \cS^\fn$.
\item[(ii)] For any $\fq \mid \fn$, one has $v_\fq(\kappa_\fn')\otimes \sigma_\fq=\varphi_\fq (\kappa'_{\fn/\fq})$ in $\cO/M[\Gamma]\otimes_\ZZ G_\fq.$
\end{itemize}
\end{theorem}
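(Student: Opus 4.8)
The plan is to follow the classical construction of Kolyvagin derivative classes, in the form given by Rubin in \cite[Prop.~2.4]{rubinlang}; the genuinely new features are that everything must be carried out $\ZZ[\Delta\times\Gamma]$-equivariantly (and then passed to $(p,\chi)$-components) and that a prime $\fq\mid\fn$ need not split completely in $E(\fn/\fq)$. The ingredients I would use are the relation $\kappa_\fn'=D_\fn c_\fn-M\beta_\fn$ of Definition~\ref{def der} with $\beta_\fn$ as in Lemma~\ref{lem a1} (the point at which Hilbert's Theorem~90 enters), the telescoping identity~(\ref{tele}), the Euler system norm relation, and the congruence relation of Theorem~\ref{th cong}. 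The single computational handle, used throughout, is that in $E(\fn)^\times$ one has $\kappa_\fn'=(D_\fn c_\fn)\,\beta_\fn^{-M}$ with $D_\fn c_\fn$ a unit; consequently the image of $\kappa_\fn'$ in $(E(\fn)^\times/M)_\chi$ is the class of the unit $D_\fn c_\fn$, and for any prime $\fQ$ of $E(\fn)$ one has $\ord_\fQ(\kappa_\fn')=-M\,\ord_\fQ(\beta_\fn)$, with no cancellation.

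For part~(i), let $\fl\nmid\fn$. The archimedean case is trivial. If $\fl\in S_{\rm ram}(E/K)$, then a short computation shows that Hypothesis~\ref{hyp chi}(iii) (for $\fl\in S_{\rm ram}(F/K)$), together with the fact that $\chi$ is ramified at every $\fl\in S_{\rm ram}(L/K)$ and has prime-to-$p$ order, forces $(Y_{E,\{\fl\}})_\chi=0$, so that $v_\fl$ vanishes identically on $(E^\times/M)_\chi$. Every remaining prime $\fl$ is unramified in $E(\fn)$ (the primes ramifying in $E(\fn)/K$ lie in $S\cup\{\fl':\fl'\mid\fn\}$); for such $\fl$, and for any lift $\tilde\sigma\in\Gal(E(\fn)/K)$ of $\sigma\in\Gal(E/K)$, the handle gives $\ord_{\fQ_{E(\fn)}}(\tilde\sigma\kappa_\fn')=-M\,\ord_{\fQ_{E(\fn)}}(\tilde\sigma\beta_\fn)\equiv0\pmod M$, and since $E(\fn)/E$ is unramified at $\fl$ this equals $\ord_{\fQ_E}(\sigma\kappa_\fn')$. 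Hence $v_\fl(\kappa_\fn')=0$ in $\cO/M[\Gamma]$.

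For part~(ii), fix $\fq\mid\fn$ and set $\fm:=\fn/\fq$, so $D_\fn=D_\fq D_\fm$. I would first record the local picture at $\fq$: since $\fq\in\cP$ splits completely in $E$ and in $K(\mu_M)$, one has $E_\fQ=K_\fq$, $\mu_M\subset K_\fq$, $M\mid\N\fq-1$, and $K(\fq)_\fQ/K_\fq$ is totally and tamely ramified of degree $M$ with group $G_\fq$; tame local class field theory then supplies a canonical isomorphism $\FF_{K_\fq}^\times/M\xrightarrow{\sim}G_\fq$ through which $\rec_\fq$ on $\cO_{K_\fq}^\times$ factors as reduction modulo $\fQ$ followed by this isomorphism, and identifies the fixed generator $\sigma_\fq$ with a primitive $M$-th root of unity $\zeta_M\in\FF_{K_\fq}$ governing the action of inertia on uniformizers. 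In these terms $v_\fq$ reads off the singular (ramified) part of a local class and $\varphi_\fq$ the finite (unramified) part. On the finite side, by part~(i) $v_\fq(\kappa_\fm')=0$, so $l_\fq(\kappa_\fm')$ is a unit modulo $M$ and $\varphi_\fq(\kappa_\fm')$ is the class, under $\FF_{K_\fq}^\times/M\xrightarrow{\sim}G_\fq$, of its reduction modulo $\fQ$, which since $\kappa_\fm'\equiv D_\fm c_\fm$ modulo $M$-th powers is governed by $u_\fq(D_\fm c_\fm)$. On the singular side, working in $E(\fn)$ — whose inertia group at $\fq$ is exactly $G_\fq$ — the handle shows that $v_\fq(\kappa_\fn')$ is read off from $\ord_{\fQ_{E(\fn)}}(\beta_\fn)\bmod M$, and I would pin this down by computing the reduction of $(\sigma_\fq-1)\beta_\fn$ modulo $\fQ$ in two ways: by the tame-inertia formula it equals $\zeta_M^{\,\ord_{\fQ_{E(\fn)}}(\beta_\fn)}$, while the identity $(\sigma_\fq-1)\beta_\fn=\tfrac1M(\sigma_\fq-1)D_\fn c_\fn=\tfrac1M(M-\N_{G_\fq})D_\fm c_\fn=D_\fm c_\fn-\tfrac1M D_\fm(1-{\rm Fr}_\fq^{-1})c_\fm$ — obtained from the telescoping identity and the Euler system norm relation $\N_{G_\fq}c_\fn=(1-{\rm Fr}_\fq^{-1})c_\fm$ — expresses the same reduction in terms of $u_\fq(D_\fm c_\fn)$ and $u_\fq(D_\fm c_\fm)$, which Theorem~\ref{th cong} relates through the factor $\tfrac{\N\fq-1}{M}$ (here using that ${\rm Fr}_\fq$ acts on residue groups modulo $M$ as multiplication by $\N\fq\equiv1$). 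Comparing the two descriptions determines $\ord_{\fQ_{E(\fn)}}(\beta_\fn)\bmod M$, hence $v_\fq(\kappa_\fn')$, and matches it with $\varphi_\fq(\kappa_\fm')$ after the twist by $\sigma_\fq$; propagating the $\ZZ[\Delta\times\Gamma]$-linearity and passing to $(p,\chi)$-components yields the stated identity in $\cO/M[\Gamma]\otimes_\ZZ G_\fq$.

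The hard part will be the bookkeeping in part~(ii): matching, with correct normalizations, the three cyclic groups of order $M$ in play at $\fq$ — the singular line picked out by $v_\fq$, the finite line $\FF_{K_\fq}^\times/M$ picked out by $\varphi_\fq$, and $G_\fq$ — including the twist by the chosen generator $\sigma_\fq$ and the factor $\tfrac{\N\fq-1}{M}$ coming from Theorem~\ref{th cong}, and doing all of this in the presence of the residue-field extension $\FF_{F(\fm)_\fQ}/\FF_{K_\fq}$, which is nontrivial precisely because $\fq$ need not split completely in $E(\fn/\fq)$. This last feature is where the argument departs from \cite[Prop.~2.4]{rubinlang} and \cite[Th.~2.1]{rubincrelle}, and the required modification should run parallel to the one already made in the proof of Theorem~\ref{th cong} above.
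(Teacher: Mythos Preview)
Your plan is essentially the paper's own: the same handle $\kappa_\fn'=D_\fn c_\fn-M\beta_\fn$, the same reduction of $v_\fq(\kappa_\fn')$ to $\ord_{E(\fn)_\fQ}(\beta_\fn)$, and the same chain telescoping/norm relation/Theorem~\ref{th cong} to compute $u_\fq((1-\sigma_\fq)\beta_\fn)$. Two points deserve correction, however.

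\smallskip
\textbf{Part (i): the case analysis is unnecessary.} For every $\fl\nmid\fn$, including $\fl\in S_{\rm ram}(E/K)$, the extension $E(\fn)/E$ is unramified at $\fl$ (only primes dividing $\fn$ ramify in $E(\fn)/E$). Since $c_\fn$ is a \emph{strict} Euler system, $D_\fn c_\fn$ is a global unit of $E(\fn)$, so $\ord_{\fQ_{E(\fn)}}(\kappa_\fn')\equiv 0\pmod M$ at every finite place; the unramifiedness of $E(\fn)/E$ at $\fl$ then transports this directly down to $E$. No appeal to $(Y_{E,\{\fl\}})_\chi=0$ is needed.

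\smallskip
\textbf{Part (ii): the anticipated difficulty is a red herring.} The non-trivial residue extension $\FF_{E(\fm)_\fQ}/\FF_{K_\fq}$ is \emph{not} handled by a device parallel to the auxiliary-prime trick of Theorem~\ref{th cong}; rather, Theorem~\ref{th cong} is used as a black box and the descent is accomplished by two elementary observations: $1-{\N}\fq\,{\rm Fr}_\fq^{-1}$ annihilates $\FF_{E(\fm)_\fQ}^\times$, and ${\rm Fr}_\fq$ acts trivially on $\FF_{K_\fq}^\times$. What your outline is actually missing is the role of $\beta_{\fn/\fq}$. After your computation
\[
(1-\sigma_\fq)\beta_\fn=\tfrac1M(1-{\rm Fr}_\fq^{-1})D_\fm c_\fm - D_\fm c_\fn,
\]
applying $u_\fq$ and Theorem~\ref{th cong} leaves you with an expression in $u_\fq(D_\fm c_\fm)$ living in $\cO\otimes\FF_{E(\fm)_\fQ}^\times$, not in $\cO\otimes\FF_{K_\fq}^\times/M$. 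To land on $u_\fq(\kappa_\fm')$ you must invoke Lemma~\ref{lem a1}(ii) a second time, for $\fm$: since $\fq$ splits completely in $E$ one has ${\rm Fr}_\fq\in\cH_\fm$, whence $\tfrac1M(1-{\rm Fr}_\fq^{-1})D_\fm c_\fm=(1-{\rm Fr}_\fq^{-1})\beta_\fm$; then the identity $(1-{\rm Fr}_\fq^{-1})=({\N}\fq-1){\rm Fr}_\fq^{-1}+(1-{\N}\fq\,{\rm Fr}_\fq^{-1})$ together with the two observations above reassembles everything into $\tfrac{1-{\N}\fq}{M}u_\fq(\kappa_\fm')$, with $\kappa_\fm'=D_\fm c_\fm-M\beta_\fm$. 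This is the step encoded in the paper as Lemma~\ref{lem fs} and the ensuing computation; once you insert it, the argument goes through without further modification.
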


\begin{proof}
Claim (i) follows by noting that $\kappa_\fn'=D_\fn c_\fn$ in $(E(\fn)^\times/M)_\chi$, $c_\fn \in U_{F(\fn)}(=(\cO_{E(\fn)}^\times)_\chi)$ and $E(\fn)/E$ is unramified outside $\fn$.

We prove claim (ii). We take $\tau \in \Gamma$. We fix a lift of $\tau$ in $\cG_\fn(=\Gal(F(\fn)/K))$ and denote it also by $\tau$. Recall that $\kappa_\fn'=D_\fn c_\fn-M\beta_\fn \in (E(\fn)^\times)_\chi$ by definition. Since $E(\fq)/E$ is totally ramified at primes above $\fq$, we have
\begin{eqnarray}\label{eq ord}
\sum_{\sigma \in \Delta} {\rm ord}_\fQ(\sigma\tau \kappa_\fn')\sigma^{-1}&=&\sum_{\sigma \in \Delta}{\rm ord}_\fQ(\sigma\tau (D_\fn c_\fn -M\beta_\fn))\sigma^{-1}\\
&=&-\sum_{\sigma \in \Delta}{\rm ord}_{E(\fn)_\fQ}(\sigma\tau \beta_\fn)\sigma^{-1} \nonumber
\end{eqnarray}
in $(\ZZ/M[\Delta])_\chi=\cO/M $, where $\ord_\fQ: E^\times \to \ZZ$ and $\ord_{E(\fn)_\fQ}: E(\fn)^\times \to \ZZ$ are the normalized valuations.

Let
$$u_\fq : \{a \in (E(\fn)^\times)_\chi \mid \text{$a$ is a unit at all primes above $\fq$}\} \to \cO \otimes_\ZZ \FF_{E(\fn)_\fQ}^\times $$
be the map induced by $u_\fq $ in \S \ref{sec cong}.
This map also induces
$$ u_\fq : \cS^{\fn/\fq} \to \cO\otimes_\ZZ \FF_{E_{\fQ}}^\times/M=\cO\otimes_\ZZ \FF_{K_\fq}^\times/M .$$

By (\ref{eq ord}) and Lemma \ref{lem fs} below, it is sufficient to prove
\begin{eqnarray}\label{fs suff}
u_\fq((1-\sigma_\fq)\tau \beta_\fn)=\frac{1-{\N}\fq}{M}u_\fq(\tau \kappa_{\fn/\fq}') \text{ in }\cO\otimes_\ZZ \FF_{E(\fn)_\fQ}^\times,
\end{eqnarray}
where $\frac{1-{\N}\fq}{M}$ denotes the map $\cO\otimes_\ZZ \FF_{K_{\fq}}^\times/M \to \cO\otimes_\ZZ \FF_{E(\fn)_\fQ}^\times $ induced by
$$\FF_{K_\fq}^\times/M \stackrel{\frac{1-{\N}\fq}{M}}{\hookrightarrow} \FF_{K_\fq}^\times \subset \FF_{E(\fn)_\fQ}^\times.$$
We compute
\begin{eqnarray*}
&&u_\fq((1-\sigma_\fq)\tau\beta_\fn ) \\
&=& u_\fq\left( \frac 1M (1-\sigma_\fq)\tau D_\fn c_\fn\right) \text{ (by Lemma \ref{lem a1}(ii))}\\
&=& u_\fq\left( \frac 1M ({\N}_{G_\fq}-M)\tau D_{\fn/\fq}c_\fn \right) \text{ (by (\ref{tele}))}\\
&=&u_\fq\left(\frac 1M (1-{\rm Fr}_\fq^{-1})\tau D_{\fn/\fq}c_{\fn/\fq} -\tau D_{\fn/\fq}c_\fn\right) \text{ (by the norm relation)}\\
&=&u_\fq\left(\frac 1M (1-{\rm Fr}_\fq^{-1})\tau D_{\fn/\fq}c_{\fn/\fq} -\frac{{\N}\fq-1}{M}{\rm Fr}_\fq^{-1}\tau D_{\fn/\fq}c_{\fn/\fq}\right) \text{ (by Theorem \ref{th cong})}\\
&=&u_\fq\left((1-{\rm Fr}_\fq^{-1})\tau\beta_{\fn/\fq}-\frac{{\N}\fq-1}{M}{\rm Fr}_\fq^{-1}\tau D_{\fn/\fq}c_{\fn/\fq}\right) \text{ (by Lemma \ref{lem a1}(ii))}\\
&=&u_\fq\left(({\N}\fq-1){\rm Fr}_\fq^{-1}\tau\beta_{\fn/\fq}-\frac{{\N}\fq-1}{M}{\rm Fr}_\fq^{-1}\tau D_{\fn/\fq}c_{\fn/\fq}\right) \text{ (by $(1-{\N}\fq {\rm Fr}_\fq^{-1})u_\fq(\beta_{\fn/\fq})=0$)}\\
&=&\frac{1-{\N}\fq}{M}{\rm Fr}_\fq^{-1} u_\fq(\tau (D_{\fn/\fq}c_{\fn/\fq}-M \beta_{\fn/\fq})) \\
&=& \frac{1-{\N}\fq}{M}u_\fq(\tau \kappa_{\fn/\fq}') \text{ (by ${\rm Fr}_\fq=1$ on $\FF_{K_\fq}$).}
\end{eqnarray*}
Thus (\ref{fs suff}) is proved.
\end{proof}

\begin{lemma}\label{lem fs}
Let $\kappa \in \cS^{\fn/\fq}$. Suppose that there exists $\beta \in (E(\fn)^\times)_\chi$ such that
$$u_\fq((1-\sigma_\fq  )\beta) =\frac{1-{\N}\fq}{M} u_\fq(\kappa) \text{ in }\cO\otimes_\ZZ \FF_{E(\fn)_\fQ}^\times . $$
Then we have
$$\sum_{\sigma \in \Delta}\sigma^{-1}\otimes{\rm rec}_\fQ(\sigma \kappa)=-\left(\sum_{\sigma \in \Delta}{\rm ord}_{E(\fn)_\fQ}(\sigma \beta)\sigma^{-1}\right) \otimes \sigma_\fq$$
in $(\ZZ/M[\Delta]\otimes_\ZZ G_\fq)_\chi =\cO/M \otimes_\ZZ G_\fq,$ where ${\rm rec}_\fQ: E^\times \to E_\fQ^\times \to \Gal(E(\fq)_\fQ/E_\fQ)=G_\fq$ is the local reciprocity map at $\fQ$ and $\ord_{E(\fn)_\fQ}: E(\fn)^\times \to \ZZ$ is the normalized valuation.\end{lemma}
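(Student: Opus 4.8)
The plan is to reduce the statement to a purely local computation at the fixed place $\fQ$. Since $\fq$ splits completely in $E=LF$ we have $E_\fQ=K_\fq$, and the sums $\sum_{\sigma\in\Delta}$ occurring in $u_\fq$, $v_\fq$ and $\rec_\fQ$ merely package the local data at the primes of $E$ above $\fq$; moreover $\Gal(E(\fn)/K)$ is abelian (it is isomorphic to a subgroup of $\Gal(L/K)\times\Gal(F(\fn)/K)$, both of which are abelian), so each $\sigma\in\Delta$ commutes with $\sigma_\fq$, and $\sigma_\fq$ --- which generates the inertia subgroup at $\fQ$ in $\Gal(E(\fn)/E)$ --- fixes $\fQ$, so the reductions below make sense. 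Working in the completions, I would first record the local structure: as $\fq\notin S_p(K)$ the residue characteristic is prime to $M=p^m$, and as $\fq$ splits completely in $K(\mu_M)$ we have $\mu_M\subset K_\fq$ and $M\mid\#\FF_{K_\fq}^\times$; hence $K(\fq)_\fQ/K_\fq$ is cyclic, totally and tamely ramified of degree $M$, so $K(\fq)_\fQ=K_\fq(\pi^{1/M})$ for a uniformizer $\pi$ of $K_\fq$, with the chosen generator $\sigma_\fq$ of $G_\fq$ characterised by $\sigma_\fq(\pi^{1/M})=\zeta_\fq\,\pi^{1/M}$ for a primitive $M$-th root of unity $\zeta_\fq$. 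Since $E(\fn/\fq)/E$ is unramified at $\fq$, the extension $E(\fn/\fq)_\fQ/K_\fq$ is unramified, so $E(\fn)_\fQ=E(\fn/\fq)_\fQ(\pi^{1/M})$ and $\Pi:=\pi^{1/M}$ is a uniformizer of $E(\fn)_\fQ$ with $\sigma_\fq\Pi=\zeta_\fq\Pi$.

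Next I would carry out two local computations. On the `singular' side: for any $\gamma\in E(\fn)_\fQ^\times$, writing $\gamma=u\,\Pi^{\ord_{E(\fn)_\fQ}(\gamma)}$ with $u$ a unit and using that $\sigma_\fq$ acts trivially on the residue field, one gets
$$\overline{(1-\sigma_\fq)\gamma}=\overline{\gamma/\sigma_\fq\gamma}=\bar\zeta_\fq^{-\ord_{E(\fn)_\fQ}(\gamma)}\ \ \text{in }\FF_{E(\fn)_\fQ}^\times ,$$
so that $u_\fq((1-\sigma_\fq)\beta)$ equals (a sign times) $\big(\sum_{\sigma\in\Delta}\ord_{E(\fn)_\fQ}(\sigma\beta)\,\sigma^{-1}\big)\otimes\bar\zeta_\fq$. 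On the `finite' side: the tame norm-residue symbol shows that for a $\fQ$-unit $a\in K_\fq^\times$ the element $\rec_\fQ(a)\in G_\fq$ corresponds, under the isomorphism $G_\fq\xrightarrow{\sim}\mu_M$, $\sigma_\fq\mapsto\zeta_\fq$, to $\bar a^{({\N}\fq-1)/M}\in\mu_M\subset\FF_{K_\fq}^\times$; applying this to $a=\sigma\kappa$ (a $\fQ$-unit since $\kappa\in\cS^{\fn/\fq}$), the element $\frac{1-{\N}\fq}{M}u_\fq(\kappa)$ is exactly $\sum_{\sigma\in\Delta}\sigma^{-1}\otimes\rec_\fQ(\sigma\kappa)$, read inside $\cO\otimes_\ZZ\FF_{E(\fn)_\fQ}^\times$ via $G_\fq\xrightarrow{\sim}\mu_M\subset\FF_{K_\fq}^\times\subset\FF_{E(\fn)_\fQ}^\times$.

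Feeding these into the hypothesis $u_\fq((1-\sigma_\fq)\beta)=\frac{1-{\N}\fq}{M}u_\fq(\kappa)$ identifies $\sum_{\sigma\in\Delta}\sigma^{-1}\otimes\rec_\fQ(\sigma\kappa)$, viewed in $\cO\otimes_\ZZ\FF_{E(\fn)_\fQ}^\times$, with (a sign times) $\big(\sum_{\sigma\in\Delta}\ord_{E(\fn)_\fQ}(\sigma\beta)\,\sigma^{-1}\big)\otimes\zeta_\fq$; it then remains to descend this to $\cO/M\otimes_\ZZ G_\fq$, which follows from the elementary fact that $\cO\otimes_\ZZ\mu_M\to\cO\otimes_\ZZ\FF_{E(\fn)_\fQ}^\times$ is injective (both are cyclic $\cO$-modules, and as $\mu_M$ is the unique subgroup of order $M$ in the cyclic group $\FF_{E(\fn)_\fQ}^\times$ of order $N$ with $M\mid N$, the induced map is multiplication by $N/M$ on $\cO/M\to\cO/N$, which is injective), upon identifying $\cO\otimes_\ZZ\mu_M\cong\cO/M\otimes_\ZZ G_\fq$ via $\zeta_\fq\mapsto\sigma_\fq$. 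This argument is a version of the `finite-singular comparison' for Kolyvagin systems and runs parallel to \cite[Prop.~2.4]{rubinlang}, \cite[Th.~2.1]{rubincrelle} and \cite[\S9.1]{R}; the part I would be most careful about is keeping all signs and normalizations consistent --- the sign in the singular computation, the arithmetic-versus-geometric normalization of $\rec_\fQ$ together with the matching convention for the tame $M$-th power residue symbol, and the sign in the definition of $v_\fq$ --- which together are precisely what produce the displayed formula, everything else being routine local class field theory together with the $\Delta$-equivariant bookkeeping already in place for $u_\fq$, $v_\fq$ and $\varphi_\fq$.
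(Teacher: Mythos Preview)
Your proposal is correct and is precisely the standard local computation underlying this lemma; the paper itself does not give a proof but simply cites \cite[Lem.~2.3]{rubinlang} and \cite[Lem.~3.2]{kurihara}, and your outline is essentially a recapitulation of the argument found there. The only point I would tighten is the injectivity step at the end: since $\cO$ is a $\ZZ_p$-algebra and $M=p^m$, one has $\cO\otimes_\ZZ\FF_{E(\fn)_\fQ}^\times=\cO/p^{v_p(N)}$ with $v_p(N)\ge m$, and the map $\cO/M\to\cO/p^{v_p(N)}$ induced by the inclusion $\mu_M\hookrightarrow\FF_{E(\fn)_\fQ}^\times$ is multiplication by a unit times $p^{v_p(N)-m}$, hence injective; your phrasing in terms of ``cyclic $\cO$-modules'' is a little loose but the conclusion is right.
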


\begin{proof} This is a well-known fact from algebraic number theory (see \cite[Lem. 2.3]{rubinlang} or \cite[Lem. 3.2]{kurihara}).
\end{proof}

\subsection{Kolyvagin systems}

The aim of this subsection is to modify the system $(\kappa_\fn')_\fn$ to construct a Kolyvagin system (see Theorem \ref{th koly}). Our argument is parallel to that of Mazur and Rubin in \cite[App. A]{MRkoly}, although in loc. cit. $K$ is assumed to be $\QQ$. 

For $\fq \in \cP$, recall from \S \ref{sec koly} that
we have a canonical decomposition
$$K_\fq^\times/M=\Pi_\fq \times \FF_{K_\fq}^\times/M .$$
Let
$$\widetilde l_\fq : (E^\times/M)_\chi \to (\ZZ[\Delta\times \Gamma] \otimes_\ZZ K_\fq^\times/M)_\chi=\cO[\Gamma] \otimes_\ZZ K_\fq^\times/M$$
be the map induced by (\ref{def l}).
We define
$$\cS(\fn):=\{a \in \cS^\fn \mid \widetilde l_\fq(a) \in \cO[\Gamma] \otimes_\ZZ \Pi_\fq \text{ for every $\fq \mid \fn$}\}.$$
(This is $\cS(\fn)_m$ in \S \ref{sec koly}.)

We recall the definition of Kolyvagin systems in the rank one case. (See Definition \ref{def koly}.)

\begin{definition}
A Kolyvagin system (of rank one for $(F/K,\chi,m,\cP_m)$) is a collection
$$(x_\fn)_\fn \in \prod_{\fn \in \cN} \cS(\fn)\otimes_\ZZ G_\fn$$
which satisfies the following property: for any $\fn \in \cN$ and $\fq \mid \fn$, we have
$$v_\fq(x_\fn)=\varphi_\fq(x_{\fn/\fq}).$$
The module of Kolyvagin systems is denoted by ${\rm KS}_1(F/K,\chi,\cP_m)_m$.
\end{definition}

Let $\mathfrak{S}(\fn)$ be the set of permutations of $\{\fq \mid \fn\}$. For $\pi \in \mathfrak{S}(\fn)$, define
$$\fd_\pi:=\prod_{\fq \mid \fn, \ \pi(\fq)=\fq}\fq \in \cN. $$
For $\fq ,\fr \in \cP$, we denote by ${\rm Fr}_\fr^\fq$ the Frobenius element of $\fr$ regarded as an element of $G_\fq$. (When $\fq=\fr$, define ${\rm Fr}_\fq^\fq:=1$.)

\begin{theorem}\label{th koly}
For $\fn \in \cN$, we set
$$\kappa_\fn=\kappa(c)_\fn:=\sum_{\pi \in \mathfrak{S}(\fn)} {\rm sgn}(\pi) \kappa_{\fd_\pi}' \otimes \bigotimes_{\fq \mid \fd_\pi}\sigma_\fq  \otimes \bigotimes_{\fq \mid \fn/\fd_\pi}{\rm Fr}_{\pi(\fq)}^\fq \in \cS^\fn\otimes_\ZZ G_\fn.$$
Then $\kappa_\fn$ belongs to $\cS(\fn) \otimes_\ZZ G_\fn$ and the collection $\kappa=(\kappa_\fn)_{\fn \in \cN}$ is a Kolyvagin system. In particular, the assignment $c \mapsto \kappa$ gives a canonical homomorphism
$${\rm ES}_1(F/K,S,\chi) \to {\rm KS}_1(F/K,\chi,\cP_m)_m.$$
\end{theorem}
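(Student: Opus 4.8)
The plan is to verify three things in turn: first, that $\kappa_\fn \in \cS(\fn) \otimes_\ZZ G_\fn$ (the local triviality condition at primes dividing $\fn$); second, that the collection $(\kappa_\fn)_\fn$ satisfies the Kolyvagin system relation $v_\fq(\kappa_\fn) = \varphi_\fq(\kappa_{\fn/\fq})$ for every $\fq \mid \fn$; and third, that the assignment $c \mapsto \kappa(c)$ is $\cO$-linear (which is immediate, since each $\kappa_\fn'$ depends $\cO$-linearly on $c$ through $D_\fn c_\fn$ modulo $M$). The $\cO[[\,]]$-equivariance claimed in the introduction follows by tracking the chosen lift $\widetilde\sigma$ in the proof of Theorem \ref{th cong}, but for the bare statement here only the module homomorphism property is needed.

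First I would handle membership in $\cS(\fn) \otimes_\ZZ G_\fn$. By Theorem \ref{th fs}(i) we already know $\kappa_\fd' \in \cS^\fd \subset \cS^\fn$ for each $\fd \mid \fn$, so $\kappa_\fn$ certainly lies in $\cS^\fn \otimes_\ZZ G_\fn$; the content is that $\widetilde l_\fq(\kappa_\fn)$ lands in $\cO[\Gamma]\otimes_\ZZ \Pi_\fq$ for each $\fq \mid \fn$. Fix such a $\fq$. Split the sum defining $\kappa_\fn$ according to whether the permutation $\pi$ fixes $\fq$ or not. For $\pi$ with $\pi(\fq)=\fq$ we have $\fq \mid \fd_\pi$, and then $\kappa_{\fd_\pi}'$ appears with $D_{\fd_\pi}c_{\fd_\pi}$ a unit away from $\fd_\pi/\fq$ times the $D_\fq$-derivative of an $\fq$-unit, whose localization at $\fq$ is forced into $\Pi_\fq$ by the telescoping identity (\ref{tele}) exactly as in the finite–singular computation; for $\pi$ with $\pi(\fq)\neq\fq$ the factor $\kappa_{\fd_\pi}'$ lies in $\cS^{\fd_\pi}$ with $\fq \nmid \fd_\pi$, hence $v_\fq(\kappa_{\fd_\pi}')=0$, and one checks (using the canonical decomposition $K_\fq^\times/M = \Pi_\fq \times \FF_{K_\fq}^\times/M$ and the fact that the Frobenius factors ${\rm Fr}_{\pi(\fq)}^\fq$ record only the singular part) that the $\FF_{K_\fq}^\times/M$-component of $\widetilde l_\fq$ cancels. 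This is essentially the bookkeeping of \cite[App.~A]{MRkoly}, adapted to our base field $K$.

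Next, for the Kolyvagin relation, I would compute $v_\fq(\kappa_\fn)$ and $\varphi_\fq(\kappa_{\fn/\fq})$ separately and match them. Applying $v_\fq$ kills every term of $\kappa_\fn$ indexed by a $\pi$ with $\pi(\fq)=\fq$ (those have $\fq \mid \fd_\pi$, so $\kappa_{\fd_\pi}'$ is an $\fq$-unit and $v_\fq$ vanishes on it), leaving only the terms with $\pi(\fq)\neq\fq$; reorganizing the surviving sum over permutations of $\{\fl \mid \fn/\fq\}$ and using $v_\fq(\kappa_{\fd_\pi}')\otimes\sigma_\fq = \varphi_\fq(\kappa'_{\fd_\pi/\fq'})$-type identities from Theorem \ref{th fs}(ii) together with the relation $\varphi_\fq(-) = v_\fq(-)\cdot P_\fq$ on classes with trivial valuation at $\fq$ (as in \cite[Chap.~XIII, Prop.~13]{LF}), one expresses the result as precisely $\varphi_\fq$ applied to the analogous alternating sum defining $\kappa_{\fn/\fq}$. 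The combinatorial identity that makes the two alternating sums over $\mathfrak{S}(\fn)$ and $\mathfrak{S}(\fn/\fq)$ correspond — keeping track of the signs ${\rm sgn}(\pi)$ and the Frobenius tensor factors — is the crux, and is exactly the content of \cite[App.~A]{MRkoly}; I would invoke that argument, having already supplied the two inputs it needs (Theorem \ref{th cong} and Theorem \ref{th fs}) in the generality of abelian extensions of an arbitrary number field $K$.

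The main obstacle is the second step: organizing the double sum over permutations so that the signs and the interwoven Frobenius factors ${\rm Fr}_{\pi(\fq)}^\fq$ produce, after applying $v_\fq$, the correctly-signed alternating sum with $\varphi_\fq$ pulled out. Everything else — local triviality and $\cO$-linearity — is a routine consequence of the telescoping identity, Hilbert 90 (via $\beta_\fn$ in Lemma \ref{lem a1}), and the finite–singular relation already established in Theorem \ref{th fs}. Once $\kappa = (\kappa_\fn)_\fn$ is shown to be a Kolyvagin system, the final sentence is immediate: $c \mapsto \kappa(c)$ is the desired homomorphism ${\rm ES}_1(F/K,S,\chi) \to {\rm KS}_1(F/K,\chi,\cP_m)_m$.
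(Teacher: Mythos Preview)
Your proposal has the difficulty of the two steps reversed, and contains a concrete error in the Kolyvagin-relation step.

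In your step 2 you assert that $v_\fq$ kills the terms with $\pi(\fq)=\fq$ because ``$\kappa'_{\fd_\pi}$ is an $\fq$-unit''. This is backwards: when $\pi(\fq)=\fq$ one has $\fq \mid \fd_\pi$, so $\kappa'_{\fd_\pi} \in \cS^{\fd_\pi}$ has no constraint at $\fq$; it is the terms with $\pi(\fq)\neq\fq$ (hence $\fq \nmid \fd_\pi$) that are killed by $v_\fq$. Once corrected, the Kolyvagin relation is in fact the \emph{easy} part: the surviving terms (those with $\pi(\fq)=\fq$) are in bijection with $\mathfrak{S}(\fn/\fq)$, and Theorem~\ref{th fs}(ii) converts each $v_\fq(\kappa'_{\fd_\pi})\otimes\sigma_\fq$ into $\varphi_\fq(\kappa'_{\fd_\pi/\fq})$, giving $v_\fq(\kappa_\fn)=\varphi_\fq(\kappa_{\fn/\fq})$ directly. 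No elaborate combinatorics over $\mathfrak{S}(\fn)$ is needed here.

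The genuine obstacle is your step 1, the membership $\kappa_\fn \in \cS(\fn)\otimes G_\fn$, which you describe as ``routine''. It is not: one must show that the $\FF_{K_\fr}^\times/M$-component of $\widetilde l_\fr(\kappa_\fn)$ vanishes for each $\fr\mid\fn$, and this does not follow term-by-term from the telescoping identity or from $v_\fq=0$ on individual summands. The paper reduces this to Lemma~\ref{compute}, an explicit formula for $\widetilde l_{\fr,f}(\kappa'_\fn)$ in terms of $\widetilde l_{\fr,f}(\kappa'_{\fd_\pi})$ for cyclic permutations $\pi$, whose proof requires introducing the modified Euler system $\omega_\fn$ (satisfying the norm relation~(\ref{omega rel}) and the congruence~(\ref{cong 2}) via Theorem~\ref{th cong}), the auxiliary elements $\xi_{\fm,\fq}$, and two further computational lemmas (\ref{lem xi} and \ref{lem xi 2}). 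Your sketch skips this machinery entirely; the claimed ``cancellation'' of finite components between the two families of permutations does not occur and is not how the argument proceeds.
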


By using Theorem \ref{th fs}, one easily verifies that $\kappa$ satisfies
$$v_\fq(\kappa_\fn)=\varphi_\fq(\kappa_{\fn/\fq})$$
for any $\fn \in \cN$ and $\fq \mid \fn$. So it is enough to prove that $\kappa_\fn$ belongs to $\cS(\fn) \otimes_\ZZ G_\fn. $

We define a map $\widetilde l_{\fq,f}: (E^\times/M)_\chi \to \cO[\Gamma]\otimes_\ZZ \FF_{K_\fq}^\times/M$ by the composition $$\widetilde l_{\fq,f}: (E^\times/M)_\chi \xrightarrow{\widetilde l_\fq} \cO[\Gamma]\otimes_\ZZ K_\fq^\times/M \twoheadrightarrow \cO[\Gamma] \otimes_\ZZ \FF_{K_\fq}^\times/M,$$
where the second map is induced by the projection $K_\fq^\times/M \twoheadrightarrow \FF_{K_\fq}^\times/M$.

For $\fq ,\fr \in \cP$, define $e_\fr^\fq \in \ZZ/M$ by
$${\rm Fr}_\fr^\fq =\sigma_\fq^{e_{\fr}^\fq} \text{ in }G_\fq. $$

Theorem \ref{th koly} is reduced to the following lemma.

\begin{lemma}[{{\cite[Th. A.4]{MRkoly}}}]\label{compute}
For any $\fn \in \cN$ and a prime $\fr \mid \fn$, we have
$$\widetilde l_{\fr,f}(\kappa'_\fn) =\sum_{\pi\in \mathfrak{S}_1(\fn), \ \pi(\fr)\neq \fr}(-1)^{\nu(\fn/\fd_\pi)}\left( \prod_{\fq \mid \fn/\fd_\pi} e_{\pi(\fq)}^\fq \right)\widetilde l_{\fr,f}(\kappa'_{\fd_\pi}),$$
where $\mathfrak{S}_1(\fn) \subset \mathfrak{S}(\fn)$ is the subset of cyclic permutations.

\end{lemma}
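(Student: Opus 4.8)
I would prove Lemma~\ref{compute} by induction on $\nu(\fn)$, following the computation of Mazur and Rubin in \cite[App.~A]{MRkoly} but keeping track of the auxiliary group $\Gamma$ and working over a general number field $K$. The statement is an identity in $\cO[\Gamma]\otimes_\ZZ\FF_{K_\fr}^\times/M$, so the first task is to express $\widetilde l_{\fr,f}(\kappa'_\fn)$ explicitly through the reduction of the Euler system element $c_\fn$ at the fixed prime above $\fr$, using Definition~\ref{def der} and Lemma~\ref{lem a1}.

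Recall that $\kappa'_\fn\equiv D_\fn c_\fn\pmod M$ in $(E(\fn)^\times/M)_\chi$ while $\kappa'_\fn$ itself lies in $(E^\times/M)_\chi$, and that $c_\fn$ is a genuine unit. Since $E(\fn)/E$ is totally ramified at $\fr$ of degree $M=\#G_\fr$ (from the factor $E(\fr)$) and unramified along the $\fn/\fr$-part, the residue extension $\FF_{E(\fn)_\fQ}/\FF_{K_\fr}$ is unramified of degree $f$, where $f$ is the order of ${\rm Fr}_\fr$ in $\cH_{\fn/\fr}$ (so $f\mid M$, as $\cH_{\fn/\fr}$ is an abelian $p$-group of exponent $M$). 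On residue fields $D_\fr$ acts as multiplication by $\sum_{i=1}^{M-1}i=\tfrac{M(M-1)}2\equiv 0\pmod M$ (using that $p$, hence $M$, is odd), so $D_\fn c_\fn$ vanishes in $\FF_{E(\fn)_\fQ}^\times/M$; since $\kappa'_\fn\in E^\times$ its reduction over $E(\fn)$ is the image of its reduction over $E$, so $\widetilde l_{\fr,f}(\kappa'_\fn)$ lies in the kernel of the natural map $\FF_{K_\fr}^\times/M\to\FF_{E(\fn)_\fQ}^\times/M$. As $\N\fr\equiv 1\pmod M$ this map is multiplication by $f$ on $\ZZ/M$, hence has kernel annihilated by $f$. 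This settles the base case $\nu(\fn)=1$ (then $\fn=\fr$, $f=1$, and both sides vanish).

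For the inductive step I would extract the precise value of $\widetilde l_{\fr,f}(\kappa'_\fn)$ by peeling off the primes dividing $\fn/\fr$ one at a time. Fixing $\fq\mid\fn$ with $\fq\ne\fr$, the norm relation ${\N}_{G_\fq}c_\fn=(1-{\rm Fr}_\fq^{-1})c_{\fn/\fq}$, the telescoping identity \eqref{tele}, the congruence relation (Theorem~\ref{th cong}) and the Hilbert~90 element $\beta_\fn$ of Lemma~\ref{lem a1}(ii) together allow one to rewrite $\kappa'_\fn$, after reduction at $\fr$, in terms of $\kappa'_{\fn/\fq}$-data and the reduction of $c_{\fn/\fq}$; iterating and projecting onto $\FF_{K_\fr}^\times/M$ — where one uses that $\cH_{\fn/\fr}$ acts through the cyclic group generated by ${\rm Fr}_\fr$ — produces an expansion of $\widetilde l_{\fr,f}(\kappa'_\fn)$ indexed by chains of primes starting and returning to $\fr$, i.e.\ by the cyclic permutations of $\{\fq\mid\fn\}$ that move $\fr$. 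In this expansion the alternating sign is the cumulative effect of $M-{\N}_{G_\fq}$ in \eqref{tele} after the $\chi$-projection, each Frobenius ${\rm Fr}_{\pi(\fq)}$ occurring at a link of the chain is rewritten as $\sigma_\fq^{e^\fq_{\pi(\fq)}}$ in $G_\fq$ (producing the factors $e^\fq_{\pi(\fq)}$), and the fixed-point blocks $\fd_\pi$ — automatically prime to $\fr$ — survive precisely as the terms $\widetilde l_{\fr,f}(\kappa'_{\fd_\pi})$. A combinatorial reorganization identical to the one in \cite[App.~A]{MRkoly} then assembles these contributions into the asserted formula.

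The main obstacle is the bookkeeping in the inductive step: one must keep straight the distinction between the finite part computed over $E$ and the reduction computed over $E(\fn)$ (mediated by the degree-$f$ residue extension and the ``multiplication by $f$'' map above), the decomposition-group action of $\cH_{\fn/\fr}$ on $\FF_{E(\fn)_\fQ}$, and the exact signs and Frobenius exponents, all while carrying the $\cO[\Gamma]$-factor throughout. Passing from $K=\QQ$ in \cite[App.~A]{MRkoly} to a general $K$ requires only replacing cyclotomic-specific inputs by the class-field-theoretic facts already recorded in Lemma~\ref{cheb}, Theorem~\ref{th cong} and Lemma~\ref{lem fs}, and $\cO[\Gamma]$-linearity of every map in sight is immediate from the constructions in \S\ref{sec koly} and \S\ref{sec cong}; this lemma is the last missing piece in the proof of Theorem~\ref{th koly}.
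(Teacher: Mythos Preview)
Your base case is fine, but the inductive step misses the key technical device that makes the computation work, both in \cite[App.~A]{MRkoly} and in the paper. You propose to peel off a \emph{single} prime $\fq\mid\fn$, $\fq\neq\fr$, using the norm relation $\N_{G_\fq}c_\fn=(1-{\rm Fr}_\fq^{-1})c_{\fn/\fq}$ and the element $\beta_\fn$, and then iterate. This does not close up: the factor $(1-{\rm Fr}_\fq^{-1})$ does not annihilate residue fields, so the reduction of $c_{\fn/\fq}$ at $\fr$ survives as a genuinely new term that cannot be rewritten as $\widetilde l_{\fr,f}(\kappa'_{\fm})$ for any smaller $\fm$; and fixing a single $\fq$ cannot produce the required sum over \emph{all} chains leaving $\fr$. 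In other words, the recursion you need is not over $\fn$ alone but over pairs $(\fm,\fr')$, and the induction hypothesis for the stated formula at smaller $\fm$ is never actually invoked in the argument you sketch.

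What the paper does (following \cite{MRkoly}) is to replace $c_\fn$ by the modified system $\omega_\fn:=\sum_{\fd\mid\fn}(-1)^{\nu(\fn/\fd)}\bigl(\prod_{\fq\mid\fn/\fd}\tfrac{\N\fq-1}{M}{\rm Fr}_\fq^{-1}\bigr)c_\fd$, which satisfies $\N_{G_\fq}\omega_\fn=(1-\N\fq\,{\rm Fr}_\fq^{-1})\omega_{\fn/\fq}$ and, by Theorem~\ref{th cong}, the vanishing $u_\fq(\sigma\omega_\fn)=0$ for $\fq\mid\fn$. One then sets $\xi_{\fm,\fq}:=\tfrac1M(1-\N\fq\,{\rm Fr}_\fq^{-1})D_\fm\omega_\fm$ and proves two facts: (a) $\tfrac{1-\N\fr}{M}\,l^1_{\fr,f}(\sigma\kappa'_\fm)=u_\fr(\sigma\xi_{\fm,\fr})$ (Lemma~\ref{lem xi}), which translates the problem into one about the $\xi$'s; and (b) the clean recursion $u_\fq(\sigma\xi_{\fm,\fr})=-\sum_{\fr'\mid\fm}e_\fr^{\fr'}u_\fq(\sigma\xi_{\fm/\fr',\fr'})$ (Lemma~\ref{lem xi 2}), whose proof uses crucially that $1-\N\fq\,{\rm Fr}_\fq^{-1}$ kills $\FF_{E(\fn)_\fQ}^\times$ and that $u_\fq(\omega_\fm)=0$. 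Iterating (b) until the second index returns to $\fr$ and then applying (a) at both ends is what produces the sum over cyclic permutations moving $\fr$; no induction on $\nu(\fn)$ is used. The introduction of $\omega_\fn$ and the two-variable object $\xi_{\fm,\fq}$ is not cosmetic: it is exactly what converts the awkward factor $(1-{\rm Fr}_\fq^{-1})$ into one that vanishes on residue fields and makes the chain expansion terminate.
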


We give a proof of Theorem \ref{th koly} by using this lemma.

\begin{proof}[Proof of Theorem \ref{th koly}]
As explained above, it is sufficient to show $\kappa_\fn \in \cS(\fn)\otimes_\ZZ G_\fn$. One sees that this is equivalent to showing that
\begin{eqnarray}\label{zero}
\sum_{\pi \in \mathfrak{S}(\fn)}{\rm sgn}(\pi) \left( \prod_{\fq \mid \fn/\fd_\pi} e_{\pi(\fq)}^\fq \right)\widetilde l_{\fr,f}(\kappa'_{\fd_\pi})=0
\end{eqnarray}
for every prime $\fr \mid \fn$. We compute
\begin{eqnarray*}
&&\sum_{\pi \in \mathfrak{S}(\fn)}{\rm sgn}(\pi) \left( \prod_{\fq \mid \fn/\fd_\pi} e_{\pi(\fq)}^\fq \right) \kappa'_{\fd_\pi} \\
&=& \sum_{\pi \in \mathfrak{S}(\fn), \ \pi(\fr)=\fr}{\rm sgn}(\pi) \left( \prod_{\fq \mid \fn/\fd_\pi} e_{\pi(\fq)}^\fq \right) \kappa'_{\fd_\pi} \\
&&+ \sum_{\pi \in \mathfrak{S}(\fn), \ \pi(\fr)=\fr}\sum_{\pi' \in \mathfrak{S}_1(\fd_\pi),\ \pi'(\fr)\neq \fr}{\rm sgn}(\pi\pi') \left( \prod_{\fq \mid \fn/\fd_{\pi'}} e_{\pi\pi'(\fq)}^\fq \right) \kappa'_{\fd_{\pi'}} \\
&=& \sum_{ \pi \in \mathfrak{S}(\fn), \ \pi(\fr)=\fr}{\rm sgn}(\pi)  \left( \prod_{\fq \mid \fn/\fd_\pi} e_{\pi(\fq)}^\fq \right) s_\pi,
\end{eqnarray*}
where
$$s_\pi:=\kappa_{\fd_\pi}'-\sum_{\pi' \in \mathfrak{S}_1(\fd_\pi), \ \pi'(\fr)\neq \fr}(-1)^{\nu(\fd_\pi/\fd_{\pi'})} \left( \prod_{\fq \mid \fd_\pi/\fd_{\pi'}} e_{\pi'(\fq)}^\fq \right)  \kappa'_{\fd_{\pi'}}.$$
Since $\widetilde l_{\fr,f}(s_\pi)=0$ by Lemma \ref{compute}, we obtain (\ref{zero}).
\end{proof}

The rest of this appendix is devoted to the proof of Lemma \ref{compute}.

We use the following `modified Euler system'
$$
\omega_\fn:=\sum_{\fd \mid \fn}(-1)^{\nu(\fn/\fd)} \left( \prod_{\fq \mid \fn/\fd}\frac{{\N}\fq-1}{M} {\rm Fr}_\fq^{-1}\right)c_\fd \in U_{F(\fn)}.
$$
(Such a modification was considered by Kato \cite[\S 2.2]{katoeuler} and, more generally, by Rubin \cite[\S 9.6]{R}.)
By computation, one checks that the system $(\omega_\fn)_\fn$ satisfies the following norm relation:
\begin{eqnarray}\label{omega rel}
{\N}_{G_\fq}\omega_\fn=(1-{\N}\fq{\rm Fr}_\fq^{-1})\omega_{\fn/\fq} \text{ for any $\fq \mid \fn$}.
\end{eqnarray}
(The norm relation of this form is used in the definition of Euler systems by Mazur and Rubin in \cite[Def. 3.2.2]{MRkoly}.)
Also, by using Theorem \ref{th cong}, one checks that
\begin{eqnarray}\label{cong 2}
u_\fq(\sigma \omega_\fn)=0\text{ for any $\fq \mid \fn$ and $\sigma \in \cG_\fn$}.
\end{eqnarray}
An analogue of Lemma \ref{lem a1} holds for $\omega_\fn$: there exists $\gamma_\fn \in (E(\fn)^\times)_\chi$ such that
\begin{eqnarray*}
(\sigma-1)\gamma_\fn=\frac 1M (\sigma-1)D_\fn \omega_\fn
\end{eqnarray*}
for any $\sigma \in \cH_\fn$. As in Definition \ref{def der}, we define
$$\kappa'(\omega_\fn):=D_\fn \omega_\fn -M \gamma_\fn \in (E^\times)_\chi.$$
One checks that
$$\kappa_\fn'(=\kappa'(c_\fn))=\kappa'(\omega_\fn) \text{ in }(E^\times/M)_\chi.$$
So we may replace $\kappa_\fn'$ by $\kappa'(\omega_\fn)$.

Let
$$l_{\fq,f}^1: (E^\times/M)_\chi \to (\ZZ[\Delta]\otimes_\ZZ \FF_{K_\fq}^\times/M)_\chi=\cO\otimes_\ZZ \FF_{K_\fq}^\times/M$$
be the map induced by
$$E^\times/M \to \ZZ[\Delta]\otimes_\ZZ K_\fq^\times/M \twoheadrightarrow \ZZ[\Delta]\otimes_\ZZ \FF_{K_\fq}^\times/M,$$
where the first map is induced by $a \mapsto \sum_{\sigma \in \Delta}\sigma^{-1}\otimes l_\fq(\sigma a)$ and the second by the projection $K_\fq^\times/M \twoheadrightarrow \FF_{K_\fq}^\times/M$. This map is related with $\widetilde l_{\fq,f}: (E^\times/M)_\chi \to \cO[\Gamma]\otimes_\ZZ \FF_{K_\fq}^\times/M$ by
$$\widetilde l_{\fq,f}=\sum_{\sigma \in \Gamma}\sigma^{-1}\otimes l_{\fq,f}^1(\sigma(-)).$$

In the following, we fix $\fn \in \cN$.

\begin{lemma}[{{\cite[Prop. A.15]{MRkoly}}}]\label{lem xi}
Let $\fm \mid \fn$ and $ \fq \mid \fn$.
\begin{itemize}
\item[(i)] $(1-{\N} \fq{\rm Fr}_ \fq^{-1})D_\fm \omega_\fm \in M \cdot U_{F(\fm)}$.
(When $\fq \mid \fm$, the element ${\rm Fr}_ \fq\in \cH_{\fm/ \fq}$ is regarded as an element of $\cH_\fm$ via $\cH_{\fm/ \fq}\hookrightarrow \cH_{\fm}$.)
\item[(ii)] We set
$$
\xi_{\fm, \fq}:=  \frac 1M (1-{\N} \fq{\rm Fr}_ \fq^{-1})D_\fm \omega_\fm \in U_{F(\fm)}.
$$
Then for any $\sigma \in \cG_\fm$ we have
$$
\frac{1-{\N}\fq}{M} l_{ \fq,f}^1(\sigma \kappa'_\fm)=u_ \fq(\sigma \xi_{\fm, \fq}) \text{ in }\cO \otimes_\ZZ \FF_{E(\fn)_\fQ}^\times,
$$
where $\frac{1-{\N}\fq}{M}$ denotes the injection $\cO\otimes_\ZZ \FF_{K_{\fq}}^\times/M \hookrightarrow \cO\otimes_\ZZ \FF_{E(\fn)_\fQ}^\times $ induced by
$$
\FF_{K_\fq}^\times/M \stackrel{  \frac{1-{\N}\fq}{M}}{\hookrightarrow} \FF_{K_\fq}^\times \subset \FF_{E(\fn)_\fQ}^\times.
$$
\end{itemize}
\end{lemma}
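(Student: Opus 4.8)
The plan is to prove the two claims in turn, modeling the argument on \cite[Prop.~A.15]{MRkoly} and \cite[Prop.~2.4]{rubinlang}.

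\textbf{Claim (i).} I would begin from the decomposition
\[
1-{\N}\fq\,{\rm Fr}_\fq^{-1}=(1-{\rm Fr}_\fq^{-1})-({\N}\fq-1)\,{\rm Fr}_\fq^{-1}
\]
in $\ZZ[\cH_\fm]$, noting that ${\rm Fr}_\fq$ defines an element of $\cH_\fm$ in all cases (when $\fq\mid\fm$ via the inclusion $\cH_{\fm/\fq}\hookrightarrow\cH_\fm$, and otherwise directly, since $\fq$ splits completely in $F$). As $\fq$ splits completely in $K(\mu_M)$ we have $M\mid{\N}\fq-1$, so $({\N}\fq-1)\,{\rm Fr}_\fq^{-1}D_\fm\omega_\fm\in M\cdot U_{F(\fm)}$. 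For the first summand, the $\omega$-analogue of Lemma~\ref{lem a1}(i) — which is implicit in the existence of $\gamma_\fm$ recorded just above Lemma~\ref{lem xi}, and follows by the same telescoping computation using (\ref{tele}) and the norm relation (\ref{omega rel}) — applied to $\sigma={\rm Fr}_\fq^{-1}\in\cH_\fm$ gives $({\rm Fr}_\fq^{-1}-1)D_\fm\omega_\fm\in M\cdot U_{F(\fm)}$. Adding the two contributions proves (i) and shows that $\xi_{\fm,\fq}$ is a well-defined element of $U_{F(\fm)}$.

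\textbf{Claim (ii).} The key preliminary step is to combine $M\,\xi_{\fm,\fq}=(1-{\N}\fq\,{\rm Fr}_\fq^{-1})D_\fm\omega_\fm$ with $D_\fm\omega_\fm=\kappa'_\fm+M\gamma_\fm$ in $(E(\fm)^\times)_\chi$ (the latter being the definition $\kappa'_\fm=\kappa'(\omega_\fm)=D_\fm\omega_\fm-M\gamma_\fm$, via the identification $\kappa'_\fm=\kappa'(\omega_\fm)$ modulo $M$). Since $\fq$ splits completely in $E$, the element ${\rm Fr}_\fq$ acts trivially on $(E^\times)_\chi$, so $(1-{\N}\fq\,{\rm Fr}_\fq^{-1})\kappa'_\fm=(1-{\N}\fq)\kappa'_\fm$; and since $\mu(E(\fm))_\chi=0$ under Hypothesis~\ref{hyp chi}(i) (a nonzero $(p,\chi)$-quotient of $\mu(E(\fm))$ would force $\chi=\omega$, using that $LF(\fm)/L$ is a $p$-extension), the module $(E(\fm)^\times)_\chi$ is torsion-free and one may cancel the factor $M$. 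This gives, for every $\sigma\in\cG_\fm$,
\[
\sigma\,\xi_{\fm,\fq}=\frac{1-{\N}\fq}{M}\,\sigma\kappa'_\fm+(1-{\N}\fq\,{\rm Fr}_\fq^{-1})\,\sigma\gamma_\fm\qquad\text{in }(E(\fm)^\times)_\chi .
\]
I would then localize at $\fQ$ and reduce to the residue field. The point is that ${\rm Fr}_\fq$ induces the ${\N}\fq$-power (arithmetic Frobenius) map on $\FF_{E(\fn)_\fQ}^\times$, so $1-{\N}\fq\,{\rm Fr}_\fq^{-1}$ annihilates $\cO\otimes_\ZZ\FF_{E(\fn)_\fQ}^\times$; moreover $(1-{\N}\fq\,{\rm Fr}_\fq^{-1})\sigma\gamma_\fm$ has $\fQ$-valuation $(1-{\N}\fq)\,{\rm ord}_\fQ(\sigma\gamma_\fm)\in M\ZZ$, hence is an $M$-th power modulo $M$-th powers, so its $u_\fq$-contribution vanishes. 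Reading off the residue of the displayed identity — using that $\sigma\xi_{\fm,\fq}$ is a $\fQ$-unit and that the image of $\sigma\kappa'_\fm$ after projecting away the uniformizer part of $K_\fq^\times/M=\Pi_\fq\times\FF_{K_\fq}^\times/M$ is exactly $l_{\fq,f}^1(\sigma\kappa'_\fm)$ (the $\Delta$-twist being common to $u_\fq$ and $l_{\fq,f}^1$) — yields the claimed equality $\frac{1-{\N}\fq}{M}\,l_{\fq,f}^1(\sigma\kappa'_\fm)=u_\fq(\sigma\xi_{\fm,\fq})$.

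\textbf{Main obstacle.} The genuinely delicate part is the final step: verifying that, after reducing modulo $M$-th powers, the term $(1-{\N}\fq\,{\rm Fr}_\fq^{-1})\sigma\gamma_\fm$ really does become trivial in $\cO\otimes_\ZZ\FF_{E(\fn)_\fQ}^\times$ (a priori $\sigma\gamma_\fm$ is not a $\fQ$-unit), and that the surviving term is indeed computed by the finite-part localization $l_{\fq,f}^1$ relative to the specific splitting of $K_\fq^\times/M$. This is a computation in local class field theory of precisely the type already used in Lemma~\ref{lem fs} (and in \cite[Lem.~2.3]{rubinlang}, \cite[Prop.~A.15]{MRkoly}); it transfers verbatim to our setting because its only arithmetic input, the congruence relation $u_\fq(\sigma\omega_\fn)=0$ of (\ref{cong 2}) (equivalently Theorem~\ref{th cong}), is available here.
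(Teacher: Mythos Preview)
Your proof of (i) is fine and is essentially what the paper means by ``same as Lemma~\ref{lem a1}(i)''.

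For (ii) your route is genuinely different from the paper's, and the point you flag as the ``Main obstacle'' is exactly where the two diverge. The paper does \emph{not} use $\gamma_\fm$ here. Instead it introduces an auxiliary element $\gamma_{\fm,\fq}\in (E(\fn)^\times)_\chi$ satisfying the cocycle relation only for $\tau\in\Gal(E(\fn)/E(\fq))$, and (since $E(\fn)/E(\fq)$ is unramified at $\fq$) adjusts it by an element of $(E(\fq)^\times)_\chi$ so that it is a $\fQ$-unit. Then $D_\fm\omega_\fm-M\gamma_{\fm,\fq}\in (E(\fq)^\times)_\chi$ represents $\kappa'_\fm$ modulo $M$, and the crucial observation is that the inclusion $K_\fq^\times/M\hookrightarrow E(\fq)_\fQ^\times/M$ has kernel exactly $\Pi_\fq$, so $l^1_{\fq,f}(\kappa'_\fm)$ equals the honest residue $u_\fq(D_\fm\omega_\fm-M\gamma_{\fm,\fq})$ in $E(\fq)_\fQ$. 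The computation then proceeds with $u_\fq$ applied only to genuine units, using $(1-\N\fq\,{\rm Fr}_\fq^{-1})u_\fq(\gamma_{\fm,\fq})=0$ and the cocycle relation $({\rm Fr}_\fq^{-1}-1)\gamma_{\fm,\fq}=\frac1M({\rm Fr}_\fq^{-1}-1)D_\fm\omega_\fm$.

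Your direct approach with $\gamma_\fm$ can be made to work, but the step ``its $u_\fq$-contribution vanishes'' is not correct as stated when $\fq\mid\fm$. Writing locally $\sigma\gamma_\fm=\pi_\fm^{a}u_1$ and ${\rm Fr}_\fq^{-1}\pi_\fm=\pi_\fm\cdot w$, the residue of the unit part of $(1-\N\fq\,{\rm Fr}_\fq^{-1})\sigma\gamma_\fm$ is $\bar w^{-a\N\fq}$, which is not $1$ in general: the operator $1-\N\fq\,{\rm Fr}_\fq^{-1}$ kills the residue of the \emph{unit} $u_1$, but ${\rm Fr}_\fq^{-1}$ does not fix the uniformizer $\pi_\fm$, so a spurious term survives. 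What actually happens is that this term cancels against the residue coming from the $\Pi_\fq$-part $\pi^{b(1-\N\fq)/M}$ of the $\kappa'_\fm$ contribution (using $a=-b$ and the relation $\bar w^M=\overline{w'}^{(\N\fq-1)/\N\fq}$ forced by ${\rm Fr}_\fq^{-1}\pi=\pi$, where $\pi=\pi_\fm^M w'$). This cancellation is a genuine extra computation, not covered by Lemma~\ref{lem fs} or (\ref{cong 2}); the paper's choice of a $\fQ$-unit $\gamma_{\fm,\fq}$ over $E(\fq)$ is precisely the device that avoids it.
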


\begin{proof}
The proof of claim (i) is the same as that of Lemma \ref{lem a1}(i).

We prove claim (ii). We may assume $\sigma=1$.
As in Lemma \ref{lem a1}(ii), there exists $\gamma_{\fm,\fq} \in (E(\fn)^\times)_\chi$ such that
\begin{eqnarray}\label{gamma rel}
(\tau-1)\gamma_{\fm,\fq}= \frac 1M (\tau-1)D_\fm \omega_\fm
\end{eqnarray}
for any $\tau \in \cH_{\fn/\fq}= \Gal(E(\fn)/E(\fq))$. We see that
$$D_\fm \omega_\fm - M \gamma_{\fm,\fq} \in (E(\fq)^\times)_\chi$$
and
$$\kappa_\fm' = \kappa'(\omega_\fm)= D_\fm \omega_\fm -M \gamma_{\fm,\fq} \text{ in }(E(\fq)^\times/M)_\chi.$$
Since $E(\fn)/E(\fq)$ is unramified at primes above $\fq$, we may assume that $\gamma_{\fm,\fq}$ is a unit at primes above $\fq$. Since the projection map $K_\fq^\times/M \twoheadrightarrow \FF_{K_\fq}^\times/M \subset E(\fq)_\fQ^\times/M$ coincides with the map induced by the inclusion $K_\fq \hookrightarrow E(\fq)_\fQ$, we see that
$$l_{\fq,f}^1(\kappa_\fm')=u_\fq(D_\fm \omega_\fm - M\gamma_{\fm,\fq}). $$
(Compare \cite[Prop. A.8]{MRkoly}.)
So it is sufficient to prove
$$\frac{1-{\N}\fq}{M}u_\fq(D_\fm \omega_\fm - M \gamma_{\fm,\fq})=u_\fq(\xi_{\fm,\fq}).$$
We compute
\begin{eqnarray*}
&&\frac{1-{\N}\fq}{M}u_\fq(D_\fm \omega_\fm - M \gamma_{\fm,\fq}) \\
&=& \frac{1-{\N}\fq}{M}u_\fq(D_\fm \omega_\fm) - (1-{\N}\fq)u_\fq(\gamma_{\fm,\fq})\\
&=& \frac{1-{\N}\fq}{M} u_\fq(D_\fm \omega_\fm)-({\N}\fq {\rm Fr}_\fq^{-1}-{\N}\fq)u_\fq(\gamma_{\fm,\fq}) \text{ (by $(1-{\N}\fq{\rm Fr}_\fq^{-1})u_\fq(\gamma_{\fm,\fq})=0$)}\\
&=& u_\fq\left( \frac{1-{\N}\fq}{M} D_\fm \omega_\fm-\frac{{\N}\fq}{M} ({\rm Fr}_\fq^{-1}-1)D_\fm \omega_\fm \right) \text{ (by (\ref{gamma rel}))}\\
&=&u_\fq\left( \frac 1M(1-{\N}\fq{\rm Fr}_\fq^{-1}) D_\fm \omega_\fm \right)=u_\fq(\xi_{\fm,\fq}).
\end{eqnarray*}
This proves the claim.
\end{proof}

\begin{lemma}[{{\cite[Lem. A.12]{MRkoly}}}]\label{lem xi 2}
Let $\fm \mid \fn$, $\fq \mid \fm$ and $\fr \mid \fn$. Then for any $\sigma \in \cG_\fm$ we have
$$u_\fq(\sigma \xi_{\fm,\fr})=-\sum_{\fr' \mid \fm} e_{\fr}^{\fr'}\cdot u_\fq(\sigma \xi_{\fm/\fr',\fr'}) \text{ in }\cO\otimes_\ZZ \FF_{E(\fn)_\fQ}^\times.$$
(Note that $u_\fq(\sigma \xi_{\fm/\fr',\fr'})$ is annihilated by $M$ and so multiplication by $e_\fr^{\fr'} \in \ZZ/M$ is well-defined.)
\end{lemma}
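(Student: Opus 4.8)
The plan is to follow the proof of \cite[Lem.~A.12]{MRkoly}, adapting the argument there (carried out for abelian extensions of $\QQ$) to abelian extensions of $K$, and proceeding by induction on $\nu(\fm)$. One first reduces to $\sigma=1$: replacing the Euler system $c$ by its twist under a lift $\widetilde\sigma\in G_K$ of $\sigma$ replaces each $\omega_\fe$ by $\sigma\omega_\fe$ — and hence each $\xi_{\fe',\fe}$ by $\sigma\xi_{\fe',\fe}$ — while, $\cH_\fm$ being abelian, leaving every Frobenius factor unchanged, so the general case follows from this one.

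The computation rests on three local facts, each valid over an arbitrary base field $K$. (a) Since $\fq$ splits completely in $H_K\supseteq K(1)$ and $K(\fq)/K(1)$ is the $M$-part of the ray-class extension modulo $\fq$, this extension is totally tamely ramified at $\fq$; hence $G_\fq$ lies in the inertia subgroup at $\fQ$ of $\Gal(E(\fn)/E)$ and acts trivially on $\FF_{E(\fn)_\fQ}^\times$, so $u_\fq\bigl((\sigma_\fq-1)x\bigr)=0$ for all $x$ and $u_\fq(D_\fq x)=\tfrac{M(M-1)}{2}\,u_\fq(x)$. (b) Equation (\ref{cong 2}): $u_\fq(\tau\omega_\fe)=0$ whenever $\fq\mid\fe$ and $\tau\in\cG_\fe$ (a consequence of Theorem~\ref{th cong}); consequently $u_\fq(\tau\,\psi\,\omega_\fe)=0$ for any $\psi\in\ZZ[\cH_\fe]$ when $\fq\mid\fe$. (c) Because $\fq$ splits completely in $E=LF$, the residue field of $E_\fQ$ is $\FF_{K_\fq}$ and ${\rm Fr}_\fq$ acts on $\FF_{E(\fn)_\fQ}^\times$ as the ${\N}\fq$-power map, so the operator $1-{\N}\fq\,{\rm Fr}_\fq^{-1}$ annihilates $\FF_{E(\fn)_\fQ}^\times$; applied to $M\xi_{\fe,\fq}=(1-{\N}\fq\,{\rm Fr}_\fq^{-1})D_\fe\omega_\fe$ this shows $u_\fq(\sigma\xi_{\fe,\fq})$ is $M$-torsion and (using $(\sigma_{\fr'}-1)D_\fe\omega_\fe\in M\,U_{F(\fe)}$) invariant under $\cH_\fe$. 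I will also use that ${\N}\fr\equiv 1\pmod M$ ($\fr\in\cP\subseteq\cQ$ splits completely in $K(\mu_M)$) and the elementary identity $\sigma_{\fr'}^{-e}D_{\fr'}=D_{\fr'}+e\,{\N}_{G_{\fr'}}-M R_{\fr'}$ in $\ZZ[G_{\fr'}]$, with $R_{\fr'}:=\sum_{l=M-e}^{M-1}\sigma_{\fr'}^{l}$ of coefficient-sum $e$.

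For the base case $\fm=\fq$ (the case $\fr=\fq$ being trivial, as then $e_\fq^\fq=0$): by Lemma~\ref{lem xi}(i), $M\xi_{\fq,\fr}=(1-{\N}\fr\,{\rm Fr}_\fr^{-1})D_\fq\omega_\fq$ with ${\rm Fr}_\fr={\rm Fr}_\fr^\fq=\sigma_\fq^{e_\fr^\fq}$, so the identity above together with ${\N}_{G_\fq}\omega_\fq=(1-{\N}\fq\,{\rm Fr}_\fq^{-1})\omega_1=M\xi_{1,\fq}$ gives $\xi_{\fq,\fr}=-k_\fr D_\fq\omega_\fq-{\N}\fr\,e_\fr^\fq\xi_{1,\fq}+{\N}\fr\,R_\fq\omega_\fq$ with $k_\fr=\tfrac{{\N}\fr-1}{M}$; applying $u_\fq(\sigma(-))$, the first and third terms vanish by (a) and (b), and, using ${\N}\fr\equiv1\pmod M$ and the $M$-torsion of $u_\fq(\sigma\xi_{1,\fq})$ from (c), the middle term gives $-e_\fr^\fq u_\fq(\sigma\xi_{1,\fq})$. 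For the inductive step, one expands $M\xi_{\fm,\fr}=(1-{\N}\fr\,{\rm Fr}_\fr^{-1})D_\fm\omega_\fm$ by writing ${\rm Fr}_\fr^{-1}=\prod_{\fr'\mid\fm}\sigma_{\fr'}^{-e_\fr^{\fr'}}$ in $\cH_\fm$ (the $G_{\fr'}$-component of ${\rm Fr}_\fr$ being $\sigma_{\fr'}^{e_\fr^{\fr'}}$, as $\fr$ splits completely in $K(1)$) and $D_\fm=\prod_{\fr'\mid\fm}D_{\fr'}$, and substituting the identity above; resolving first the prime $\fq$ (always through its $D_\fq$-factor) and then the remaining primes, the terms produced are of four kinds: (i) for each $\fr'\mid\fm$, a ``diagonal'' term equal, via (\ref{omega rel}), to $-{\N}\fr\,e_\fr^{\fr'}(1-{\N}\fr'\,{\rm Fr}_{\fr'}^{-1})D_{\fm/\fr'}\omega_{\fm/\fr'}=-{\N}\fr\,e_\fr^{\fr'}M\xi_{\fm/\fr',\fr'}$, which contributes $-e_\fr^{\fr'}u_\fq(\sigma\xi_{\fm/\fr',\fr'})$; (ii) terms in which the level of the surviving $\omega$ still contains $\fq$, killed by (b); (iii) terms containing the factor $1-{\N}\fq\,{\rm Fr}_\fq^{-1}$, killed by (c); and (iv) terms of the shape (an operator)$\,\cdot\,\xi_{\fm/\fr',\fr'}$ with $\nu(\fm/\fr')<\nu(\fm)$, handled by combining the induction hypothesis with (b) and (c) exactly as in \cite{MRkoly}. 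Summing the contributions over $\fr'\mid\fm$ yields the claimed identity.

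The main obstacle is not any single deep step but the combinatorial organisation of the inductive step — in particular showing that every term other than the diagonal ones is of type (ii), (iii) or (iv) — which is precisely the content of \cite[Lem.~A.12]{MRkoly}; over a general base field $K$ (rather than $\QQ$) the only genuinely new ingredients are the local facts (a) and (c), both of which come down to $K(\fq)/K(1)$ being totally ramified at $\fq$ and to $\fq$ splitting completely in $LF$ — conditions built into the definitions of $\cP$ and $\cQ$ — as in the proof of Lemma~\ref{cheb}. One must also keep careful track throughout of the legitimacy of ``dividing by $M$'', which uses Lemma~\ref{lem xi}(i) and the running assumption that the $U_{F'}$ are $\cO$-free.
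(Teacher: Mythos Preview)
Your proposal takes a genuinely different route from the paper (and from \cite[Lem.~A.12]{MRkoly}, whose proof the paper reproduces). You proceed by induction on $\nu(\fm)$, expanding $(1-{\N}\fr\,{\rm Fr}_\fr^{-1})D_\fm\omega_\fm$ via the identity $\sigma_{\fr'}^{-e}D_{\fr'}=D_{\fr'}+e\,{\N}_{G_{\fr'}}-MR_{\fr'}$ and classifying the resulting terms. The paper instead introduces, for any $g\in I_\fn+M\ZZ[\cH_\fn]$, the element $\Xi_\fm(g):=\tfrac{1}{M}\,g\,D_\fm\omega_\fm\in U_{F(\fm)}$ together with $e(g)^{\fr'}\in\ZZ/M$ determined by $g\equiv e(g)^{\fr'}(\sigma_{\fr'}-1)$ in $I_{\fr'}/I_{\fr'}^2$, so that $\xi_{\fm,\fr}=\Xi_\fm(1-{\N}\fr\,{\rm Fr}_\fr^{-1})$ and $e(1-{\N}\fr\,{\rm Fr}_\fr^{-1})^{\fr'}=e_\fr^{\fr'}$. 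The lemma is then reduced to two claims: (i) $u_\fq(\Xi_\fm(g))=0$ whenever $g\in I_\fn^2+M\ZZ[\cH_\fn]$, and (ii) for general $g$ the identity $u_\fq(\Xi_\fm(g))=-\sum_{\fr'\mid\fm}e(g)^{\fr'}\,u_\fq(\xi_{\fm/\fr',\fr'})$. Since both sides of (ii) factor through $(I_\fn+M\ZZ[\cH_\fn])/(I_\fn^2+M\ZZ[\cH_\fn])$ once (i) is known, it suffices to verify (ii) for $g=\sigma_\fr-1$, which is a short direct computation using the telescoping identity (\ref{tele}), the norm relation (\ref{omega rel}), and (\ref{cong 2}). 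Claim (i) itself is proved by iterating the telescoping identity to show that $u_\fq(\Xi_\fm(h))$ for $h\in I_\fn$ lies in the $\ZZ$-span of elements $u_\fq(\tau\,\Xi_\fd(1-{\N}\fq\,{\rm Fr}_\fq^{-1}))$ with $\fd\mid\fm/\fq$, which are then killed by your fact (c). There is no induction on $\nu(\fm)$.

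What the paper's route buys is a clean separation: the hard combinatorics is packaged entirely into the structural statement (i), after which only the generator $g=\sigma_\fr-1$ needs to be checked. Your route has to confront the full $3^{\nu(\fm)}$-term expansion directly, and here the sketch has a gap. Your ``type (iv)'' terms --- those arising when ${\N}_{G_{\fr'}}$ is selected for two or more primes $\fr'$, producing products $(1-{\N}\fr_1{\rm Fr}_{\fr_1}^{-1})(1-{\N}\fr_2{\rm Fr}_{\fr_2}^{-1})\cdots$ acting on a lower-level $\omega$ --- are not of the shape (operator)$\cdot\xi_{\fm/\fr',\fr'}$ for a single $\fr'$, and the induction hypothesis (which concerns $u_\fq(\xi_{\fm',\fr})$ for fixed second index, with $\fq\mid\fm'$) does not obviously apply to them; indeed when both $\fr_1,\fr_2\neq\fq$ are removed one still has $\fq$ in the level, but the operator is now in $I_\fn^2$, which is exactly what the paper's claim (i) handles and what your induction does not. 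Your deferral of this point ``exactly as in \cite{MRkoly}'' is misplaced, since \cite{MRkoly} does not argue this way. A second, related issue is that individual terms in your expansion need not lie in $M\cdot U_{F(\fm)}$, so the division by $M$ cannot be performed termwise without first regrouping; the paper's $\Xi_\fm(g)$ formalism sidesteps this by only dividing when $g\in I_\fn+M\ZZ[\cH_\fn]$, for which Lemma~\ref{lem xi}(i) applies directly.
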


\begin{proof}
We may assume $\sigma=1$. Let $I_\fn$ be the augmentation ideal of $\ZZ[\cH_\fn]$. For any $g \in (I_\fn + M \ZZ[\cH_\fn])$, we have
$$g D_\fm \omega_\fm \in M \cdot U_{F(\fm)}$$
and we can define
$$\Xi_\fm(g):=\frac 1M g D_\fm \omega_\fm \in U_{F(\fm)}.$$
For $\fr \mid \fm$, define $e(g)^\fr \in \ZZ/M$ by
$$g=e(g)^\fr (\sigma_\fr-1) \text{ in }(I_\fr+M\ZZ[G_\fr])/(I_\fr^2+M\ZZ[G_\fr])\simeq I_\fr/I_\fr^2.$$
Note that by definition
$$\Xi_\fm(1-{\N}\fq{\rm Fr}_\fq^{-1})=\xi_{\fm,\fq} \text{ and }e(1-{\N}\fq {\rm Fr}_\fq^{-1})^\fr= e_\fq^\fr.$$
So the lemma is reduced to the following two claims.
\begin{itemize}
\item[(i)] If $g \in (I_\fn^2+M\ZZ[\cH_\fn])$, then $u_\fq(\Xi_\fm(g))$ vanishes.
\item[(ii)] For any $g \in (I_\fn+M\ZZ[\cH_\fn])$, we have
$$u_\fq(\Xi_\fm(g))=-\sum_{\fr'\mid \fm}e(g)^{\fr'} \cdot u_\fq(\Xi_{\fm/\fr'}(1-{\N}\fr' {\rm Fr}_{\fr'}^{-1})).$$
\end{itemize}

We first show that (i) implies (ii). The right hand side of the equality in (ii), viewed as a function of $g$, factors through $(I_\fn+M\ZZ[\cH_\fn])/(I_\fn^2+M\ZZ[\cH_\fn])$. By (i), so does the left hand side. So it is sufficient to show (ii) for $g=\sigma_\fr-1$ for any prime $\fr \mid \fn$. If $\fr \mid \fm$, we compute
\begin{eqnarray*}
u_\fq(\Xi_\fm(\sigma_\fr-1))&=&u_\fq\left( \frac 1M (\sigma_\fr-1) D_\fm \omega_\fm \right) \\
&\stackrel{(\ref{tele})}{=}&u_\fq\left( \frac 1M (M - {\N}_{G_\fr})D_{\fm/\fr}\omega_\fm \right) \\
&\stackrel{(\ref{omega rel})}{=}& u_\fq(D_{\fm/\fr} \omega_\fm) - u_\fq\left( \frac 1M (1-{\N}\fr {\rm Fr}_\fr^{-1})D_{\fm/\fr}\omega_{\fm/\fr}\right) \\
&\stackrel{(\ref{cong 2})}{=}& -u_\fq(\Xi_{\fm/\fr}(1-{\N}\fr{\rm Fr}_\fr^{-1})).
\end{eqnarray*}
This shows (ii) in this case. If $\fr \nmid\fm$, then $\Xi_\fm(\sigma_\fr-1)=0$ and the equality in (ii) is trivial. Thus we have proved that (i) implies (ii).

We show (i). By (\ref{cong 2}), we see that $u_\fq(\Xi_\fm(M))=0$. Note that $I_\fn$ is generated by $\{\sigma_\fr-1 \mid \fr \mid \fn\}$ over $\ZZ[\cH_\fn]$. By repeating the above computation, we see that
$$u_\fq(\Xi_\fm(h)) \in \langle u_\fq(\tau \Xi_{\fd}(1-{\N}\fq{\rm Fr}_\fq^{-1})) \mid \tau \in \cH_\fn, \  \fd \mid \fm/\fq \rangle_{\ZZ}$$
for any $h \in I_\fn$. Hence, if $g=h h'$ with $h,h' \in I_\fn$, we see that
$$u_\fq(\Xi_\fm(g)) \in \langle u_\fq(\tau \Xi_\fd((1-{\N}\fq{\rm Fr}_\fq^{-1})h)) \mid \tau \in \cH_\fn, \  \fd \mid \fm/\fq \rangle_{\ZZ}.$$
Since $\Xi_\fd((1-{\N}\fq{\rm Fr}_\fq^{-1})h)=(1-{\N}\fq{\rm Fr}_\fq^{-1})\Xi_\fd(h)$ and $1-{\N}\fq{\rm Fr}_\fq^{-1}$ annihilates $\FF_{E(\fn)_\fQ}^\times$, we see that $u_\fq(\Xi_\fm(g))=0$. This proves the claim.
\end{proof}

\begin{proof}[Proof of Lemma \ref{compute}]
(Compare the proof of \cite[Prop. A.13]{MRkoly}.)
It is sufficient to prove
$$ l_{\fr,f}^1(\sigma \kappa'_\fn) =\sum_{\pi\in \mathfrak{S}_1(\fn), \ \pi(\fr)\neq \fr}(-1)^{\nu(\fn/\fd_\pi)}\left( \prod_{\fq \mid \fn/\fd_\pi} e_{\pi(\fq)}^\fq \right) l_{\fr,f}^1(\sigma \kappa'_{\fd_\pi})$$
for any $\sigma \in \Gamma$. We fix a lift of $\sigma$ in $\cG_\fn$, and denote it also by $\sigma$. By Lemma \ref{lem xi}(ii), it is sufficient to show
\begin{eqnarray}\label{formula}
 u_\fr(\sigma \xi_{\fn,\fr}) =\sum_{\pi\in \mathfrak{S}_1(\fn), \ \pi(\fr)\neq \fr}(-1)^{\nu(\fn/\fd_\pi)}\left( \prod_{\fq \mid \fn/\fd_\pi} e_{\pi(\fq)}^\fq \right) u_\fr(\sigma \xi_{\fd_\pi,\fr}).
 \end{eqnarray}
This formula is deduced by using Lemma \ref{lem xi 2} repeatedly. In fact, we apply this lemma to describe $u_\fr(\sigma\xi_{\fn,\fr})$ as a sum of the term $-e_{\fr}^{\fr_1}\cdot u_\fr(\sigma \xi_{\fn/\fr_1,\fr_1})$. We use the lemma again to describe $u_\fr(\sigma \xi_{\fn/\fr_1,\fr_1})$ as a sum of the term $-e_{\fr_1}^{\fr_2}\cdot u_\fr(\sigma \xi_{\fn/\fr_1\fr_2,\fr_2})$. If $\fr_2\neq \fr$, then we repeat the same process. We repeat this until we have $\fr_k=\fr$. We obtain the cyclic permutation $\pi:=(\fr \  \fr_{k-1} \ \cdots \ \fr_2 \ \fr_1) \in \mathfrak{S}_1(\fn)$, and the resulting term is
$$(-1)^{\nu(\fn/\fd_\pi)}\left( \prod_{\fq \mid \fn/\fd_\pi} e_{\pi(\fq)}^\fq \right) u_\fr(\sigma \xi_{\fd_\pi,\fr}).$$
(Note that $\fd_\pi=\fn/\fr\fr_{k-1}\cdots \fr_1$, $\fn/\fd_\pi=\fr \fr_{k-1}\cdots \fr_1$ and $\nu(\fn/\fd_\pi)=k$.) Thus we see that $u_\fr(\sigma \xi_{\fn,\fr})$ is the sum of these terms, and (\ref{formula}) is proved.
\end{proof}


\begin{thebibliography}{99999999}
%
%
%
%
%
%
%
%
\bibitem{bley} W. Bley,
\newblock On the equivariant Tamagawa number conjecture for abelian
extensions of a quadratic imaginary field,
\newblock Doc. Math. \textbf{11} (2006) 73-118.
%
%
%
%
%
%
%
%
%
%
%
%
%
%
%
%
%
%
%
%
%
%

\bibitem{bdss} D. Burns, A. Daoud, T. Sano, S. Seo,
\newblock On Euler systems for the multiplicative group over general number fields,
\newblock in preparation.


%
%
%
%
%
\bibitem{BG} D. Burns, C. Greither,
\newblock On the Equivariant Tamagawa Number Conjecture for Tate motives,
\newblock Invent. math. $\mathbf{153}$ (2003) 303--359.
%
%
%



\bibitem{bks1} D. Burns, M. Kurihara, T. Sano,
\newblock On zeta elements for $\mathbb{G}_m$,
\newblock Doc. Math. \textbf{21} (2016) 555-626.
%



\bibitem{bss} D. Burns, R. Sakamoto, T. Sano,
\newblock On the theory of higher rank Euler, Kolyvagin and Stark systems II: the general theory,
\newblock submitted for publication, arXiv:1805.08448.

\bibitem{bss2} D. Burns, R. Sakamoto, T. Sano,
\newblock On the theory of higher rank Euler, Kolyvagin and Stark systems III: applications,
\newblock preprint, arXiv:1902.07002.

\bibitem{sbA} D. Burns, T. Sano,
\newblock On the theory of higher rank Euler, Kolyvagin and Stark systems,
\newblock submitted for publication, arXiv:1612.06187v1.

\bibitem{bst} D. Burns, T. Sano, K-W. Tsoi,
\newblock On higher special elements of $p$-adic representations,
\newblock submitted for publication, arXiv:1809.03830.




%
%
%
\bibitem{chin83} T. Chinburg,
\newblock On the Galois structure of algebraic integers and $S$-units,
\newblock Invent. Math. \textbf{74} (1983) 321--349.
%
%
%
%
%
%
%
\bibitem{D} H. Darmon,
\newblock Thaine's method for circular units and a conjecture of Gross,
\newblock Canadian J. Math., \textbf{47} (1995) 302-317.
%
%
%
%
%
%
%
%
%
%
%
%
%
%
%


%
%
%
%
%

\bibitem{gomez} C. Gomez,
\newblock Refined class number formulas for elliptic units,
\newblock Ann. Math. Qu\'e. {\bf 39} (2015) 1-24.


\bibitem{Greither} C. Greither,
\newblock Some cases of Brumer’s conjecture
for abelian CM extensions of totally real fields,
\newblock Math. Z. {\bf 233} (2000) 515-534.

\bibitem{GreitherPopescu} C. Greither, C. Popescu,
\newblock An Equivariant Main Conjecture in Iwasawa Theory and Applications,
\newblock J. Algebraic Geom. \textbf{24} (2015) 629-692.

%
%
%
%
%
%
%
%
%
%
%
%
%

\bibitem{JN} H. Johnston, A. Nickel,
\newblock Hybrid Iwasawa algebras and the equivariant Iwasawa main conjecture,
\newblock Amer. J. Math. {\bf 140} (2018) 245-276.

%
%
%
%
%
%
\bibitem{katoeuler} K. Kato,
\newblock Euler systems, Iwasawa theory, and Selmer groups,
\newblock Kodai Math. J. {\bf 22} (1999) 313-372.
%
%
%

\bibitem{katoasterisque} K. Kato,
\newblock $p$-adic Hodge theory and values of zeta functions of modular forms,
\newblock Ast\'erisque, (295):ix, 117-290, 2004. Cohomologies $p$-adiques et applications arithm\'etiques. III.

\bibitem{kurihara} M. Kurihara,
\newblock Refined Iwasawa theory and Kolyvagin systems of Gauss sum type,
\newblock Proc. London Math. Soc. (3) {\bf 104} (2012) 728-769.


%
%
%
%
%
%
%
%
\bibitem{MRkoly} B. Mazur, K. Rubin,
\newblock Kolyvagin systems,
\newblock Mem. Amer. Math. Soc. \textbf{799} (2004).
%
\bibitem{MR} B. Mazur, K. Rubin,
\newblock Refined class number formulas and Kolyvagin systems,
\newblock Compos. Math. $\mathbf{147}$ (2011) 56-74.
%
\bibitem{MRselmer} B. Mazur, K. Rubin,
\newblock Controlling Selmer groups in the higher core rank case,
\newblock J. Th. Nombres Bordeaux {\bf 28} (2016) 145-183.
%
\bibitem{MRGm} B. Mazur, K. Rubin,
\newblock Refined class number formulas for $\GG_{m}$,
\newblock J. Th. Nombres Bordeaux {\bf 28} (2016) 185-211.
%
%
%
%
%
%
%

\bibitem{nekovar} J. Nekov\'a\v r,
\newblock Kolyvagin's method for Chow groups of Kuga-Sato varieties,
\newblock Invent. Math. {\bf 107} (1992) 99-125.

%
%
%
%
%
%
\bibitem{PR} B. Perrin-Riou,
\newblock Syst\`emes d'Euler $p$-adiques et th\'eorie d'Iwasawa,
\newblock Ann. Inst. Fourier (Grenoble) {\bf 48} (1998) 1231-1307.
%
%
%
\bibitem{rubinlang} K. Rubin,
\newblock The main conjecture,
\newblock Appendix to: Cyclotomic Fields I and II by Serge
Lang. Springer Graduate Texts \textbf{121}. Berlin: Springer (1990).
%
\bibitem{rubincrelle} K. Rubin,
\newblock Stark units and Kolyvagin's `Euler systems',
\newblock J. reine Angew. Math. {\bf 425} (1992) 141-154.
%
\bibitem{rubinstark} K. Rubin,
\newblock A Stark Conjecture `over $\bz$' for abelian $L$-functions with multiple zeros,
\newblock Ann. Inst. Fourier \textbf{46} (1996) 33--62.
%
\bibitem{R} K. Rubin,
\newblock Euler systems,
\newblock Annals of Math. Studies \textbf{147}, Princeton Univ. Press, 2000.
%
\bibitem{sakamoto} R. Sakamoto,
\newblock Stark systems over Gorenstein local rings,
\newblock Algebra Number Theory {\bf 12} (2018), 2295--2326.
%
\bibitem{sano} T. Sano,
\newblock Refined abelian Stark conjectures and the equivariant leading term conjecture of Burns,
\newblock Compositio Math. {\bf 150} (2014) 1809-1835.
%
\bibitem{sanojnt} T. Sano,
\newblock A generalization of Darmon's conjecture for Euler systems for general $p$-adic representations,
\newblock J. Number Theory {\bf 144} (2014) 281-324.

\bibitem{sanotjm} T. Sano,
\newblock On a conjecture for Rubin-Stark elements in a special case,
\newblock Tokyo J. Math. {\bf 38} no.2, (2015) 459-476.
%
\bibitem{LF} J.-P. Serre,
\newblock Local Fields,
\newblock Graduate Texts in Math. {\bf 67}, Springer, Berlin, 1979.
%
%
%
%
%
%
%
%
%
%
%
%
%
%
%
%
%
%
%
\bibitem{tatebook} J. Tate,
\newblock Les Conjectures de Stark sur les Fonctions $L$ d'Artin
en $s=0$ (notes par D. Bernardi et N. Schappacher),
\newblock Progress in Math., \textbf{47}, Birkh\"auser, Boston, 1984.
%
%
%
%
%
%
%
%

\bibitem{wiles} A. Wiles,
\newblock On a conjecture of Brumer,
\newblock Ann. Math. {\bf 131} (1990) 555-565.





\end{thebibliography}
\end{document}